\numberwithin{equation}{section}    
\theoremstyle{plain}
\newtheorem{thm}{Theorem}[section]
\newtheorem{lem}[thm]{Lemma}
\newtheorem{prop}[thm]{Proposition}
\newtheorem{cor}[thm]{Corollary}
\theoremstyle{definition}
\newtheorem{defn}[thm]{Definition}
\newtheorem{exmp}[thm]{Example}
\theoremstyle{remark}
\newtheorem{rem}[thm]{Remark}
\newtheorem*{rem*}{Remark}
\newtheorem*{ack}{Acknowledgements}
\newcommand{\bs}{\boldsymbol}
\newcommand{\be}{\begin{equation}}    
\newcommand{\ee}{\end{equation}}    
\newcommand{\beu}{\begin{equation*}}    
\newcommand{\eeu}{\end{equation*}}    
\newcommand{\bea}{\begin{eqnarray}}    
\newcommand{\eea}{\end{eqnarray}}    
\newcommand{\beaa}{\begin{eqnarray*}}    
\newcommand{\eeaa}{\end{eqnarray*}}    
\newcommand{\bmx}{\begin{pmatrix}}    
\newcommand{\emx}{\end{pmatrix}}
\newcommand{\g}{{\mathfrak g}}    
\newcommand{\h}{{\mathfrak h}}
\newcommand{\Pp}{\mathcal P^{+}}
\newcommand{\mf}{\mathfrak}
\newcommand{\mc}{\mathcal}    
\newcommand{\al}{{\alpha}}    
\newcommand{\gh}{{\widehat \g}}
\newcommand{\alf}{{\textstyle{\frac{1}{2}}}}
\newcommand{\nn}{\nonumber}
\newcommand{\8}{{\infty}}
\newcommand{\eps}{\epsilon}
\newcommand{\Z}{{\mathbb Z}}
\newcommand{\C}{{\mathbb C}}
\renewcommand{\P}{{\mathcal P}}
\newcommand{\Q}{{\mathcal Q}}    
\newcommand{\R}{{\mathbb R}}
\newcommand{\ket}[1]{{\,\left|#1\right>}\,}
\newcommand{\uq}{{U_q}}
\newcommand{\uqsl}[1]{{U_{q^{r_{#1}}}(\widehat{\mathfrak{sl}}_2{}^{(#1)})}}    
\newcommand{\uqslp}[1]{{U_{q^{r_{#1}}}(\widehat{\mathfrak{sl}}_2)}}    
\newcommand{\uqslt}{{\uq(\widehat{\mathfrak{sl}}_2})}    
\newcommand{\uqgh}{{\uq(\widehat\g)}}    
\newcommand{\uqg}{{\uq(\g)}}
\newcommand{\YY}[2]{Y^{}_{#1, #2}}    
\newcommand{\MM}[2]{Y^{-1}_{#1, #2}}
\newcommand{\Alp}{\mathsf A}
\newcommand{\goi}[2]{=}
\newcommand{\It}{\mathcal X}
\newcommand{\Iw}{\mathcal W}
\newcommand{\Iy}{\mathcal Y}
\newcommand{\on}{}    
\newcommand{\groth}[1]{{\mathrm{Rep}(#1)}}    
\newcommand{\Cx}{\mathbb C^*}
\newcommand{\qnum}[1]{\left[ #1\right]_q}
\renewcommand{\binom}[2]{\begin{bmatrix} #1 \\ #2 \end{bmatrix}}
\newcommand{\qbinom}[2]{\binom{#1}{#2}_q}
\newcommand{\btp}{\begin{tikzpicture}[baseline=0pt,scale=0.9,line width=0.25pt]}    
\newcommand{\etp}{\end{tikzpicture}}
\newcommand{\Zys}{\Z\!\left[ Y_{i,a}^{\pm 1} \right]_{i\in I, a \in \Cx}}
\newcommand{\ma}[1]{\mathbb D_{#1}}
\renewcommand{\L}{L}
\newcommand{\scr}{\mathscr}
\newcommand{\atp}[1]{}
\newcommand{\mon}{\mathsf m}
\newcommand{\path}{\longrightarrow}
\newcommand{\confer}{c.f. } 
\newcommand{\mchiq}{\scr M}
\newcommand{\nops}{\overline{\scr P}_{(i_t,k_t)_{1\leq t\leq T}}}
\newcommand{\ps}{{(p_1,\dots,p_T)}}
\newcommand{\pps}{{(p'_1,\dots,p'_T)}}
\newcommand{\sd}{(\lambda/\mu)}
\newcommand{\ptop}{{\left(\phigh_{i_1,k_1},\dots,\phigh_{i_T,k_T}\right)}}
\newcommand{\pbot}{{\left(\plow_{i_1,k_1},\dots,\plow_{i_T,k_T}\right)}}
\newcommand{\phigh}{p^{+}}
\newcommand{\plow}{p^{-}}
\DeclareMathOperator{\wt}{wt}
\DeclareMathOperator{\res}{res}
\DeclareMathOperator{\Span}{span}
\DeclareMathOperator{\Tab}{Tab}
\DeclareMathOperator{\topp}{top}
\newcommand{\T}{\mathcal T}
\newcommand{\sn}{{(i_t,k_t)_{1\leq t\leq T}}}
\newcommand{\bb}{\begin{tikzpicture}[scale=.25] \draw (0,0) rectangle (1,1); \filldraw (.5,.5) circle (.3);\end{tikzpicture}}
\newcommand{\wb}{\begin{tikzpicture}[scale=.25] \draw (0,0) rectangle (1,1); \filldraw[fill=white] (.5,.5) circle (.3);\end{tikzpicture}}
\newcommand{\eb}{\begin{tikzpicture}[scale=.25] \draw (0,0) rectangle (1,1); \end{tikzpicture}}
\begin{document}

\title{Path description of type B $q$-characters}

\author{E. Mukhin} 
\address{Department of Mathematical Sciences, 402
  N. Blackford St, LD 270, IUPUI, Indianapolis, IN 46202, USA.  }
\email{mukhin@math.iupui.edu}

\author{C. A. S. Young}

\address{Department of Mathematics, University of York, Heslington, York YO105DD, UK. (present address)
and Yukawa Institute for Theoretical Physics, Kyoto University,
  Kyoto, 606-8502, Japan.}  \email{charlesyoung@cantab.net}
    
\begin{abstract}
We give a set of sufficient conditions for a Laurent polynomial to be the $q$-character of a 
finite-dimensional irreducible representation of a quantum affine group. We use this result 
to obtain an explicit path description of $q$-characters for a class of modules in type B.
In particular, this proves a conjecture of Kuniba-Ohta-Suzuki.
\end{abstract}

\maketitle
\setcounter{tocdepth}{1}

\section{Introduction}
The category of finite-dimensional representations of quantum affine groups has received a lot of attention in the past several decades. Its fascinating structure has been studied by many authors who used a variety of methods: algebraic e.g. \cite{CPbook,CHbeyondKR,HernandezMinAff,HernandezKR,FR,FM}, geometric e.g. \cite{Nakajima1,VV,Nakajima3}, combinatorial e.g. \cite{SchillingE6, LSS}, and analytic e.g. \cite{BRrsos, KOS}. The full list of the literature on this subject is immense. However, in sharp contrast to the case of affine Lie algebras, where irreducible finite-dimensional representations are just tensor products of evaluation modules, the quantum case is far from being understood. In particular, there is still no formula for dimensions of simple modules, no description of the restriction functor to the quantum groups of finite type, and no tensor product decomposition rules.

Our approach uses a combination of representation-theoretical and combinatorial arguments. The main tool is the theory of $q$-characters which was set up by \cite{FR, FM}. Let $\g$ be a simple Lie algebra, $I$ the set of nodes of the Dynkin diagram of $\g$, and $q\in \Cx$ not a root of unity. Similar to the usual weight theory, finite-dimensional $\uqgh$-modules have the $l$-weight decomposition according to the action of the commutative algebra of Cartan current generators. The $q$-character encodes this decomposition  in terms of formal variables 
$(Y_{i,a})_{i\in I,a\in \Cx}$.  The $q$-characters give an injective ring homomorphism 
from the Grothendieck ring of finite-dimensional $\uqgh$-modules \cite{FR} to the ring of Laurent polynomials in the $(Y_{i,a})_{i\in I,a\in \Cx}$.

The $q$-characters are in some sense more rigid than the usual characters. In particular, in many cases the $q$-character of a simple $\uqgh$-module can be computed recursively  by  an algorithm \cite{FM} starting from its highest $l$-weight. The algorithm works for the class of Kirillov-Reshetikhin modules,  see \cite{Nakajima3,NakajimaKR} for ADE types, \cite{HernandezKR} for other types. It also works for some minimal affinizations \cite{HernandezMinAff}. 
However, the straightforward application of the algorithm is known to  fail in some cases -- see for instance example 5.6 of \cite{HernandezLeclerc}, and  \cite{NN4}. 

In this paper, we prove that a certain finite set of conditions guarantees that the algorithm of \cite{FM} produces the correct $q$-character; see Theorem \ref{thmA}. To the best of our knowledge, it is the first criterion which uses no representation-theoretical information about the module at all, thus making the problem strictly combinatorial. 

Of course, the conditions of Theorem \ref{thmA} do not apply in general. In particular, $q$-characters appearing this way are thin and special, meaning that all $l$-weight spaces are one-dimensional and there is a unique dominant $l$-weight. 
For example, in type A, by \cite{NT} the theorem handles exactly the modules corresponding to skew Young diagrams, where the $q$-characters are known explicitly. We reproduce these results using our methods to illustrate and motivate the main ideas.
However, there are many examples in other types where Theorem \ref{thmA} can be used to obtain new results. In this paper we apply it to a class of simple modules in type B which we call \emph{snake modules}. The snake modules include all Kirillov-Reshetikhin modules, minimal affinizations and modules parameterized by skew Young diagrams, and many others. 

The definition of snake modules (\S\ref{sec:snakes}) is a natural generalization of the definition of minimal affinization in the following sense.
Recall that a tensor product of two fundamental modules, $\L(Y_{i,a}) \otimes \L(Y_{j,b})$
is irreducible for all but a finite set of values of $a/b\in \Cx$. In particular there is one value, of the form $q^{s_{ij}}$, $s_{ij}\in\Z_{>1}$,  
for which the simple quotient $\L( Y_{i,a}Y_{j,aq^{s_{ij}}})$ is the minimal affinization. The integers $s_{ij}$ are known in all types \cite{CPminaffBCFG,CPminaffireg,CPminaffADE}.
In the regular cases, the highest monomials of general minimal affinizations have the form
$\prod_{t}Y_{i_t,a_t}$ such that $a_{t+1}/a_{t}=q^{s_{i_t,i_{t+1}}}$ and $i_t$ form a monotonic sequence with respect to the standard ordering of the Dynkin diagram.
\emph{Minimal snake} modules are the simple modules whose highest monomials have the form
$\prod_{t}Y_{i_t,a_t}$ such that $a_{t+1}/a_{t}=q^{s_{i_t,i_{t+1}}}$ (without the monotonicity conditions). 
In type B we also can weaken the condition $a_{t+1}/a_{t}=q^{s_{i_t,i_{t+1}}}$ in such a way that Theorem \ref{thmA} still applies; we call the resulting simple modules snake modules.

The second main result of the present paper is Theorem \ref{snakechar}, which gives a closed form for the $q$-character of any snake module in type B. Theorem \ref{snakechar} states, in particular, that snake modules are thin and special. The $q$-character is given in terms of a system of \emph{non-overlapping paths}, introduced in \S\ref{sec:paths}. 

The $q$-characters for a subset of snake modules corresponding to skew diagrams have been conjectured in \cite{KOS}, see also \cite{NN1}, motivated by their study of the spectra of solvable vertex models. Their conjectured answer is given in three equivalent forms: skew Young tableaux, paths (different from the ones in this paper) and determinants of Jacobi-Trudi type. 
We use Theorem \ref{snakechar} to settle this conjecture.
  
This paper is structured as follows. The necessary details about quantum affine algebras and their finite-dimensional representations are recalled in Section \ref{qcharsec}.  Section \ref{sec:thinspecial} is devoted to proving Theorem \ref{thmA}, on the $q$-characters of thin special $\uqgh$-modules. In the remainder of the paper we specialize to types A and B. In Section \ref{sec:snakes} we define snakes and snake modules. Section \ref{sec:paths} sets up the definitions of paths, and establishes a series of lemmas about these paths. These allow us, in Section \ref{sec:snakechar}, to prove Theorem \ref{snakechar} which gives a closed form for the $q$-character of any snake module. Finally, in Section \ref{sec:tableaux} we relate our paths to the skew Young tableaux and use Theorem \ref{snakechar} to
prove the Kuniba-Ohta-Suzuki 
conjecture.

\begin{ack} The research of CASY was funded by a Postdoctoral Research Fellowship (grant number P09771) from the Japan Society for the Promotion of Science (until end Oct 2010) and by the EPSRC (from Nov 2010, grant number EP/H000054/1). CASY is grateful to the Department of Mathematical Sciences, IUPUI, for their hospitality during a visit when part of this research was carried out.
The research of EM is supported by the NSF, grant number DMS-0900984.
Computer programs to calculate $q$-characters were written in FORM \cite{FORM}.
\end{ack}

\section{Background}\label{qcharsec}
\subsection{Cartan data}
Let $\g$ be a complex simple Lie algebra of rank $N$ and $\h$ a Cartan subalgebra of $\g$. We identify $\h$ and $\h^*$ by means of the invariant inner product $\left<\cdot,\cdot\right>$  on $\g$ normalized such that the square length of the maximal root equals 2.  Let $I=\{1,\dots,N\}$ and let $\{\alpha_i\}_{i\in I}$ be a set of simple roots, with $\{\alpha^\vee_i\}_{i\in I}$ and $\{\omega_i\}_{i\in   I}$,
the sets of, respectively, simple coroots and fundamental weights. 
Let $C=(C_{ij})_{i,j\in I}$ denote the Cartan matrix. We have 
\be\nn 2 \left< \alpha_i, \alpha_j\right> = C_{ij} \left<   \alpha_i,\alpha_i\right>,\quad 2 \left< \alpha_i, \omega_j\right> = \delta_{ij}\left<\alpha_i,\alpha_i\right> .\ee Let $r^\vee$ be the maximal number of edges connecting two vertices of the Dynkin diagram of $\g$. Thus $r^\vee=1$ if $\g$ is of types A, D or E, $r^\vee = 2$ for types B, C and F and $r^\vee=3$ for $\mathrm G_2$.  Let $r_i= \alf r^\vee \left<\alpha_i,\alpha_i\right>$. The numbers $(r_i)_{i\in   I}$ are relatively prime integers. We set \be\nn D:= \mathrm{diag}(r_1,\dots,r_N),\qquad B := DC;\ee the latter is the symmetrized Cartan matrix, $B_{ij} = r^\vee \left<\alpha_i,\alpha_j\right>$.
 
Let $Q$ (resp. $Q^+$) and $P$ (resp. $P^+$) denote the $\Z$-span (resp. $\Z_{\geq 0}$-span) of the simple roots and fundamental weights respectively. Let $\leq$ be the partial order on $P$ in which $\lambda\leq \lambda'$ if and only if $\lambda'-\lambda\in Q^+$. 
 
Let $\gh$ denote the untwisted affine algebra corresponding to $\g$. 

Fix a $q \in \Cx$, not a root of unity. 
Define 
the $q$-numbers, $q$-factorial and $q$-binomial: \be\nn \qnum n := \frac{q^n-q^{-n}}{q-q^{-1}},\quad \qnum n ! := \qnum n \qnum{n-1} \dots \qnum 1,\quad \qbinom n m := \frac{\qnum n !}{\qnum{n-m} ! \qnum m !}.\ee

\subsection{Quantum Affine Algebras}
The \emph{quantum affine algebra} $\uqgh$ in Drinfeld's new realization, \cite{Drinfeld} is generated by $x_{i,n}^{\pm}$ ($i\in I$, $n\in\Z$), $k_i^{\pm 1}$ ($i\in I$), $h_{i,n}$ ($i\in I$, $n\in \Z\setminus\{0\}$) and central elements $c^{\pm 1/2}$, subject to the following relations:
\begin{align}
  k_ik_j = k_jk_i,\quad & k_ih_{j,n} =h_{j,n}k_i,\nn\\
  k_ix^\pm_{j,n}k_i^{-1} &= q^{\pm B_{ij}}x_{j,n}^{\pm},\nn\\
 \label{hxpm} [h_{i,n} , x_{j,m}^{\pm}] &= \pm \frac{1}{n} [n B_{ij}]_q c^{\mp
    {|n|/2}}x_{j,n+m}^{\pm},\\ 
x_{i,n+1}^{\pm}x_{j,m}^{\pm} -q^{\pm B_{ij}}x_{j,m}^{\pm}x_{i,n+1}^{\pm} &=q^{\pm
    B_{ij}}x_{i,n}^{\pm}x_{j,m+1}^{\pm}
  -x_{j,m+1}^{\pm}x_{i,n}^{\pm},\nn\\ [h_{i,n},h_{j,m}]
  &=\delta_{n,-m} \frac{1}{n} [n B_{ij}]_q \frac{c^n -
    c^{-n}}{q-q^{-1}},\nn\\ [x_{i,n}^+ , x_{j,m}^-]=\delta_{ij} & \frac{
    c^{(n-m)/2}\phi_{i,n+m}^+ - c^{-(n-m)/2} \phi_{i,n+m}^-}{q^{r_i} -
    q^{-r_i}},\nn\\
  \sum_{\pi\in\Sigma_s}\sum_{k=0}^s(-1)^k\left[\begin{array}{cc} s \nn\\
      k \end{array} \right]_{q^{r_i}} x_{i, n_{\pi(1)}}^{\pm}\ldots
  x_{i,n_{\pi(k)}}^{\pm} & x_{j,m}^{\pm} x_{i,
    n_{\pi(k+1)}}^{\pm}\ldots x_{i,n_{\pi(s)}}^{\pm} =0,\ \
  s=1-C_{ij},\nn
\end{align}
for all sequences of integers $n_1,\ldots,n_s$, and $i\ne j$, where $\Sigma_s$ is the symmetric group on $s$ letters, and $\phi_{i,n}^{\pm}$'s are determined by the formula
\begin{equation} \label{phidef} \phi_i^\pm(u) :=
  \sum_{n=0}^{\infty}\phi_{i,\pm n}^{\pm}u^{\pm n} = k_i^{\pm 1}
  \exp\left(\pm(q-q^{-1})\sum_{m=1}^{\infty}h_{i,\pm m} u^{\pm
      m}\right).
\end{equation}
There exist a coproduct, counit and antipode making $\uqgh$ into a Hopf algebra. 

The subalgebra of $\uqgh$ generated by $(k_i)_{i\in I}$, $(x^\pm_{i,0})_{i\in I}$ is a Hopf subalgebra of $\uqgh$ and is isomorphic as a Hopf algebra to $\uqg$, the quantized enveloping algebra of $\g$.
In this way, $\uqgh$-modules restrict to $\uqg$-modules. 


\subsection{Finite-dimensional representations and $q$-characters}\label{ssec:fdreps}
A representation $V$ of $\uqgh$ is \emph{of type $1$} if $c^{\pm 1/2}$ acts as the identity on $V$ and 
\be V = \bigoplus_{\lambda\in P} V_\lambda\,\,\,,\qquad\quad 
V_\lambda = \{ v \in V: k_i \on v = q^{\langle \alpha_i, \lambda \rangle} v\}.\label{gwts}\ee 
In what follows, all representations will be assumed to be of type 1 without further comment.
The decomposition (\ref{gwts}) of a finite-dimensional representation $V$ into its $\uqg$-weight spaces can be refined by decomposing it into the Jordan subspaces of the mutually commuting $\phi_{i,\pm r}^\pm$ defined in (\ref{phidef}),
\cite{FR}: 
\be V = \bigoplus_{\bs\gamma} V_{\bs\gamma}\,\,, \qquad \bs\gamma = (\gamma_{i,\pm r}^\pm)_{i\in I, r\in \Z_{\geq 0}}, \quad \gamma_{i,\pm r}^\pm \in \mathbb C\label{lwdecomp}\ee 
where \be V_{\bs\gamma} = \{ v \in V : \exists k \in \mathbb N, \,\, \forall i \in I, m\geq 0 \quad \left(
  \phi_{i,\pm m}^\pm - \gamma_{i,\pm m}^\pm\right)^k \on v = 0 \} \,. \nn\ee 
If $\dim (V_{\bs\gamma}) >0$, $\bs\gamma$ is called an \emph{$l$-weight} of $V$. For every finite-dimensional representation of $\uqgh$, the $l$-weights are known \cite{FR} to be of the form 
\be\nn \gamma_i^\pm(u) := \sum_{r =0}^\8 \gamma_{i,\pm r}^\pm u^{\pm r} 
 = q^{\deg Q_i - \deg R_i} \, \frac{Q_i(uq^{-1}) R_i(uq)}{Q_i(uq) R_i(uq^{-1})} \,,\ee 
where the right hand side is to be treated as a formal series in positive (resp. negative) integer powers of $u$, and $Q_i$ and $R_i$ are polynomials of the form 
\be\nn Q_i(u) = \prod_{a\in \Cx} \left( 1- ua\right)^{w_{i,a}}, \quad\quad 
       R_i(u) = \prod_{a\in \Cx} \left( 1- ua\right)^{x_{i,a}}, \ee 
for some $w_{i,a}, x_{i,a}\geq 0$, $i\in I,a\in \Cx$.  Let $\P$ denote the free abelian multiplicative group of  monomials in infinitely many formal variables $(Y_{i,a})_{i\in I,a\in \Cx}$. $\P$ is in bijection with the set of $l$-weights $\bs\gamma$ of the form above according to 
\be\label{lwdef} \bs\gamma=\bs\gamma(m) \quad\text{with}\quad m = \prod_{i \in I, a\in \Cx} Y_{i,a}^{w_{i,a}-x_{i,a}}.\ee
We identify elements of $\P$ with $l$-weights of finite-dimensional representations in this way, and henceforth write $V_m$ for $V_{\bs\gamma(m)}$.  Let $\Z\P=\Zys$ be the ring of Laurent polynomials in $(Y_{i,a})_{i\in I,a\in \Cx}$ with integer coefficients.
The $q$-character map $\chi_q$ \cite{FR} is then the injective homomorphism of rings 
\be \chi_q : \groth\uqgh \longrightarrow \Zys \nn\ee 
defined by 
\be \chi_q (V) = \sum_{m\in\P} \dim\left(V_m\right) m.\nn\ee 
Here $\groth\uqgh$ is the Grothendieck ring of finite-dimensional representations of $\uqgh$.

For any finite-dimensional representation $V$ of $\uqgh$, we let
\be \mchiq(V) := \left\{ m\in \P: m \text{ is a monomial of } \chi_q(V) \right\}.\nn\ee 

For each $j\in I$, a monomial $m = \prod_{i \in I, a\in \Cx} Y_{i,a}^{u_{i,a}}$ is said to be \emph{$j$-dominant} (resp. \emph{$j$-anti-dominant}) if and only if $u_{j,a}\geq 0$ (resp. $u_{j,a}\leq 0$) for all $a\in\Cx$. A monomial is (\emph{anti-})\emph{dominant} if and only if it is $i$-(anti-)dominant for all $i\in I$. Let $\Pp\subset \P$ denote the set of all dominant monomials.

If $V$ is a finite-dimensional representation of $\uqgh$ and $m\in\mchiq(V)$ is dominant, then a non-zero vector $\ket{m}\in V_m$ is called a \emph{highest $l$-weight vector}, with \emph{highest $l$-weight} $\bs \gamma(m)$, if and only if 
\be  \phi^\pm_{i,\pm t}\on\ket m= \ket m\gamma(m)^\pm_{i,\pm t} \quad \text{ and }\quad x^+_{i,r} \on \ket m = 0,\quad\text{for all } i\in I, r\in \Z, t\in \Z_{\geq 0} .\nn\ee 
A finite-dimensional representation $V$ of $\uqgh$ is said to be a \emph{highest $l$-weight representation} if $V= \uqgh \on \ket m$ for some highest $l$-weight vector $\ket m\in V$.

It is known \cite{CPbook,CP94} that for each $m\in\P^+$ there is a unique finite-dimensional irreducible representation, denoted $\L(m)$, of $\uqgh$ that is highest $l$-weight with highest $l$-weight $\bs\gamma(m)$, and moreover every finite-dimensional irreducible $\uqgh$-module is of this form for some $m\in \P^+$. 

Also for each $m\in \P^+$, there exists a highest $l$-weight representation $W(m)$, called the \emph{Weyl module}, with the property that every highest $l$-weight representation of $\uqgh$ with highest $l$-weight $\bs\gamma(m)$ is a quotient of $W(m)$ \cite{CPweyl}. 

A finite-dimensional $\uqgh$-module $V$ is said to be \emph{special} 
if and only if $\chi_q(V)$ has exactly one dominant monomial. It is \emph{anti-special} if and only if $\chi_q(V)$ has exactly one anti-dominant monomial. 
It is \emph{thin} 
if and only if no $l$-weight space of $V$ has dimension greater than 1. In other words, the module is thin if and only if the $(\phi_{i,\pm r}^\pm)_{i\in I,r\in \Z_{\geq 0}}$ are simultaneously diagonalizable with joint simple spectrum. A finite-dimensional $\uqgh$-module $V$ is said to be \emph{prime} if and only if it is not isomorphic to a tensor product of two non-trivial $\uqgh$-modules \cite{CPprime}.

Let $\chi:\groth\uqg \to \Z P$ be the $\uqg$-character homomorphism. Let $\wt:\P \to P$ be the homomorphism of abelian groups defined by $\wt: \YY i a \mapsto \omega_i$. Extend the map $\wt{}$ by linearity to $\Z\P\to\Z P$. Then the following diagram commutes \cite{FR}:
\be\begin{tikzpicture}    
\matrix (m) [matrix of math nodes, row sep=3em,    
column sep=4em, text height=2ex, text depth=1ex]    
{ \groth\uqgh &  \mathbb Z \P     \\    
\groth\uqg & \mathbb Z P     \\   };    
\path[->,font=\scriptsize]    
(m-1-1) edge node [above] {$\chi_q$} (m-1-2)    
(m-2-1) edge node [above] {$\chi$} (m-2-2)    
(m-1-1) edge node [left] {$\res$} (m-2-1)    
(m-1-2) edge node [right] {$\wt$} (m-2-2);    
\end{tikzpicture}\nn\ee    
where $\res:\groth\uqgh\to\groth\uqg$ is the restriction homomorphism.

Define $A_{i,a}\in \P$, $i\in I,a\in\Cx$, by 
\be\label{adef} A_{i,a} = Y_{i,aq^{r_i}} Y_{i,aq^{-r_i}} \prod_{C_{ji}=-1} Y_{j,a}^{-1} \prod_{C_{ji}=-2} Y_{j,aq}^{-1} Y_{j,aq^{-1}}^{-1} \prod_{C_{ji}=-3} Y_{j,aq^2}^{-1} Y_{j,a}^{-1} Y_{j,aq^{-2}}^{-1}.\ee 
Let $\Q$ be the subgroup of $\P$ generated by $A_{i,a}$, $i\in I,a\in\Cx$. Let $\Q^\pm$ be the monoid generated by $A_{i,a}^{\pm 1}$, $i\in I,a\in\Cx$. Note that $\wt A_{i,a} = \al_i$. There is a partial order $\leq$ on $\P$ in which $m\leq m'$ if and only if $m' m^{-1} \in \Q^+$. It is compatible with the partial order on $P$ in the sense that $m\leq m'$ implies $\wt m\leq \wt m'$.

We have \cite{FM} that for all $m_+\in \P^+$, 
\be \mchiq(\L(m_+)) \subset m_+ \Q^- \label{imchiq}.\ee

For all $i\in I, a\in \Cx$ let $u_{i,a}$
be the homomorphism of abelian groups $\P \to \mathbb Z$ such that
\be u_{i,a}(Y_{j,b})=\begin{cases} 1 & i=j \text{ and } a=b \\ 0 & \text{otherwise.}\end{cases}
\label{udef}\ee
Let 
$v$ be the homomorphisms of abelian groups $\Q \to \mathbb Z$ such that
\be 
  v(A_{j,b})=- 1 .\nn\ee
Note that the $(A_{i,a})_{i\in I,a\in \Cx}$ are algebraically independent, so 
$v$ is well-defined.

For each $j\in I$ we denote by $\uqsl j$ the copy of $\uqslp j$ generated by $c^{\pm1/2}, (x_{j,r}^\pm)_{r\in \Z}$, $(\phi_{j,\pm r}^\pm)_{r\in \Z_{\geq 0}}$. Let 
\be \beta_j : \mathbb Z \left[ Y^{\pm 1}_{i,a}\right]_{i\in I; a \in \Cx} \to \mathbb Z \left[ Y^{\pm 1}_{j,a}\right]_{a \in \Cx} \nn\ee     
be the ring homomorphism which  sends, for all $a\in\Cx$, $Y_{k,a}\mapsto 1$ for all $k \neq j$ and $Y_{j,a}\mapsto Y_{j,a}$.
For each $j\in I$, there exists \cite{FM} a ring homomorphism    
\be\tau_j : \Z\left[ Y^{\pm 1}_{i,a}\right]_{i\in I; a \in \Cx} \to \mathbb Z \left[ Y^{\pm 1}_{j,a}\right]_{a \in \Cx} \otimes \mathbb Z \left [ Z_{k, b}^{\pm 1}\right ]_{k \neq j; b \in \Cx},\nn\ee  
where $(Z_{k,b}^{\pm 1})_{k\neq j,b\in\Cx}$ are certain new formal variables, with the following properties:
\begin{enumerate}[i)] \item $\tau_j$ is injective. \item $\tau_j$ refines $\beta_j$ in the sense that $\beta_j$ is the composition of $\tau_j$ with the homomorphism $\mathbb Z [ Y^{\pm 1}_{j,a}]_{a \in \Cx} \otimes \mathbb Z [ Z_{k, b}^{\pm 1}]_{k \neq j; b \in \Cx}\to \mathbb Z [Y^{\pm 1}_{j,a}]_{a \in \Cx}$ which sends $Z_{k,b}\mapsto 1$ for all $k\neq j$, $b\in \Cx$.
\item In the diagram    
\be\label{tauj}\begin{tikzpicture}    
\matrix (m) [matrix of math nodes, row sep=3em,    
column sep=4em, text height=2ex, text depth=1ex]    
{     
\mathbb Z \left[ Y^{\pm 1}_{i,a}\right]_{i\in I; a \in \Cx}    
 & \mathbb Z \left[ Y^{\pm 1}_{j,a}\right]_{a \in \Cx} \otimes \mathbb Z \left [ Z_{k, b}^{\pm 1}\right ]_{k \neq j; b \in \Cx} \\    
\mathbb Z \left[ Y^{\pm 1}_{i,a}\right]_{i\in I; a \in \Cx}      
 & \mathbb Z \left[ Y^{\pm 1}_{j,a}\right]_{a \in \Cx} \otimes \mathbb Z \left [ Z_{k, b}^{\pm 1}\right ]_{k \neq j; b \in \Cx}\\    
};    
\path[->,font=\scriptsize,shorten <= 2mm,shorten >= 2mm]    
(m-1-1) edge node [above] {$\tau_j$} (m-1-2)    
(m-2-1) edge node [above] {$\tau_j$} (m-2-2)    
(m-1-1) edge (m-2-1)    
(m-1-2) edge (m-2-2);    
\end{tikzpicture}\ee    
let the right vertical arrow be multiplication by $\beta_j(A_{j,c}^{-1}) \otimes 1$; then the diagram commutes if and only if the left vertical arrow is multiplication by $A^{-1}_{j,c}$.
\end{enumerate}

\subsection{Affinizations of $\uqg$-modules} \label{sec:minaff} 
For $\mu\in P^+$, let $V(\mu)$ be the (unique up to isomorphism) simple $\uqg$-module with highest weight $\mu$.
$\L(m)$, $m\in \P^+$, is said to be an \emph{affinization} of $V(\mu)$ if $\wt m=\mu$ \cite{Cminaffrank2}.
Two affinizations are said to be equivalent if they are isomorphic as $\uqg$-modules. Let $[[\L(m)]]$ denote the equivalence class of $\L(m)$, $m\in \P^+$. For each $\lambda\in P^+$ define $\ma \lambda := \left\{[[\L(m)]] : \wt m = \lambda \right\}$, the set of equivalence classes of affinizations of $V(\lambda)$.
Any finite-dimensional $\uqg$-module $V$ is isomorphic to a direct sum of finite-dimensional simple $\uqg$-modules; for each $\lambda\in P^+$ let $[V:V(\lambda)]$ denote the multiplicity of $V(\lambda)$ in $V$. There is a partial order $\leq$ on equivalence classes of affinizations in which $[[\L(m)]] \leq [[\L(m')]]$ if and only if for all $\nu\in P^+$ either
\begin{enumerate}[(i)]
\item $[\L(m):V(\nu)] \leq [\L(m'):V(\nu)]$, or
\item there exists a $\mu\in P^+$, $\mu\geq \nu$, such that $[\L(m):V(\mu)]<[\L(m'):V(\mu)]$.
\end{enumerate}
For all $\lambda\in P^+$, $\ma\lambda$ is a finite poset \cite{Cminaffrank2}.
A \emph{minimal affinization} of $V(\lambda)$, $\lambda\in P^+$, is a minimal element of $\ma \lambda$ with respect to the partial ordering \cite{Cminaffrank2}.
Thus a minimial affinization is by definition an equivalence class of $\uqgh$-modules; but we shall refer also to the elements of such a class -- the modules themselves -- as minimal affinizations.

\section{Thin special $q$-characters}\label{sec:thinspecial}
\subsection{Thin simple $\uqslt$ modules} \label{boxpic}
Theorem \ref{thmA} below gives conditions which, if satisfied, guarantee that a given set of monomials $\mc M$ is the $q$-character of a thin special simple finite-dimensional $\uqgh$-module.  In the proof we shall need the following lemma concerning thin special simple finite-dimensional $\uqslt$-modules.

\begin{lem}\label{thinsimplesl2lem}
Let $\g=\mf{sl}_2$. Let $m\in \P$. There exists $M\in \P^+$ such that $\L(M)$ is thin and $m\in \mchiq(\L(M))$ if and only if,  for all $a\in \Cx$,  $|u_{1,a}(m)| \leq 1$  and $u_{1,a}(m) - u_{1,aq^2}(m) \neq 2$. If such an $M$ exists it is unique and exactly one of the following holds for all  $a\in \Cx$.
\begin{align*}
&(i)  &u_{1,a}(m) &=  1, &u_{1,aq^{2}}(m) &= 0, \qquad\text{ and }\, &mA_{1,aq}^{- 1}&\in\mchiq(\L(M))\\
&(ii)  &u_{1,a}(m) &=  1, &u_{1,aq^{2}}(m) &= 1, \qquad\text{ and }\, &mA_{1,aq}^{- 1}&\in\mchiq(W(M)) \setminus \mchiq(\L(M))\\ 
&(iii)  &u_{1,a}(m) &=  -1, &u_{1,aq^{-2}}(m) &= 0, \qquad\text{ and }\, &mA_{1,aq^{-1}}&\in\mchiq(\L(M))\\
&(iv)  &u_{1,a}(m) &=  -1, &u_{1,aq^{-2}}(m) &= -1, \,\quad\text{ and }\, &mA_{1,aq^{-1}}&\in\mchiq(W(M)) 
\setminus \mchiq(\L(M))\\
&(v)  &u_{1,a}(m)&=0, \quad\text{ and } & mA_{1,aq}^{-1}&\notin \mchiq(W(M)) &\text{ and }\,\,\, m A_{1,aq^{-1}} &\notin \mchiq(W(M)).  
\end{align*} 
\end{lem}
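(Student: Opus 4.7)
The plan is to apply the Chari--Pressley classification of finite-dimensional simple $\uqslt$-modules: every $\L(M)$ factors uniquely as a tensor product $\bigotimes_j\L(M_j)$ of Kirillov--Reshetikhin modules with highest monomials $M_j=Y_{1,a_j}Y_{1,a_jq^2}\cdots Y_{1,a_jq^{2(k_j-1)}}$ the $q$-strings of $M$, each KR factor having the explicit ``ladder'' $q$-character
\begin{equation*}
\chi_q\bigl(\L(M_j)\bigr) \;=\; \sum_{l=0}^{k_j}\Bigl(\prod_{s=0}^{k_j-l-1}Y_{1,a_jq^{2s}}\Bigr)\Bigl(\prod_{s=k_j-l+1}^{k_j}Y_{1,a_jq^{2s}}^{-1}\Bigr).
\end{equation*}
Each such monomial has $|u_{1,b}|\leq 1$ for all $b\in\Cx$ and a mandatory gap at $b=a_jq^{2(k_j-l)}$ between the positive prefix and the negative suffix, so the two conditions of the lemma are automatic for any single KR monomial.

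For the ``only if'' direction, recall that $\L(M)$ is thin for $\uqslt$ if and only if every $Y_{1,b}$ appears in $M$ with multiplicity at most one, in which case the CP $q$-strings of $M$ have pairwise disjoint supports and are in general position. Writing $m=\prod_j m_j^{(l_j)}$ accordingly, each position $b$ lies in the support of at most one $m_j^{(l_j)}$, so both sign conditions on $m$ are inherited directly from the single relevant KR ladder monomial.

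For the converse and the construction of $M$, the conditions on $m$ are local to each $q^2$-orbit $\{aq^{2s}:s\in\Z\}$, so fix an orbit and identify the restriction of $m$ to it with a finitely-supported sign sequence $\sigma:\Z\to\{-1,0,+1\}$ in which no $+1$ is immediately followed by $-1$. Parse $\sigma$ into maximal constant-sign nonzero runs; each $+$-run and $-$-run pair that is adjacent (separated by exactly one $0$) is realized as a single KR string whose ladder monomial reproduces that segment of $\sigma$, while every other $+$- or $-$-run becomes a standalone KR string of appropriate length and ladder level (a standalone $+$-run as $l=0$ with support matching the run, a standalone $-$-run as $l=k$ with support shifted one position to the left). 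By construction $u_{1,b}(M)\leq 1$ for all $b$, so $\L(M)$ is thin and $m\in\mchiq(\L(M))$ with the ladder indices dictated by the construction. Uniqueness follows because any alternative realization would fail to produce the CP canonical decomposition or would violate the thinness requirement.

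Finally, the five cases follow by inspecting the local pattern of $m$ at $\{a,aq^2\}$ or $\{aq^{-2},a\}$ and reading off the consequence for $mA_{1,aq^{\pm 1}}^{\mp 1}$ from the KR ladder structure. In cases (i) and (iii), the displayed operation advances or reverses a single KR ladder by one step, so the result lies in $\chi_q(\L(M))$. Case (v), where $u_{1,a}(m)=0$, is handled by a case analysis on $u_{1,aq^{\pm 2}}(m)$: both $mA_{1,aq}^{-1}$ and $mA_{1,aq^{-1}}$ either have some $|u_{1,b}|>1$ or require a sign configuration unattainable in the Weyl tensor product $\prod_i(Y_{1,b_i}+Y_{1,b_iq^2}^{-1})$. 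The main obstacle is cases (ii) and (iv), where $a$ and $aq^{\pm 2}$ are the positive (resp. negative) endpoints of two distinct KR strings and the operation attempts to advance both ladders at once: the resulting monomial lies in $\chi_q(W(M))$ because the two lowerings are available independently in the Weyl tensor product, but is absent from $\chi_q(\L(M))$ because in the simple quotient the KR factors must evolve independently and the ``merged'' monomial lies in the kernel of the surjection $W(M)\twoheadrightarrow\L(M)$.
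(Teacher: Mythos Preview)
Your approach is the paper's: both deduce the lemma from the explicit $q$-characters of simple and Weyl $\uqslt$-modules via the Chari--Pressley classification, and the paper's proof is little more than a recollection of those formulas together with the observation that thin simple modules are exactly those whose $q$-strings are pairwise disjoint (in addition to being in general position). Your write-up supplies considerably more detail, and the reconstruction of $M$ from the sign sequence of $m$ is correct.

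There is, however, a genuine error in your justification of cases (ii) and (iv). You claim that when $u_{1,a}(m)=u_{1,aq^2}(m)=1$ the points $a$ and $aq^2$ are ``the positive endpoints of two distinct KR strings'' and that multiplication by $A_{1,aq}^{-1}$ ``attempts to advance both ladders at once''. This cannot happen: you have already argued that in the thin situation the $q$-strings, and indeed their extended $Y$-supports, are pairwise disjoint, so two \emph{adjacent} positions $a,aq^2$ carrying nonzero $u$-value necessarily lie in the \emph{same} string. The correct picture is that $a$ and $aq^2$ both sit in the positive prefix of a single ladder monomial $m_j$. Then $mA_{1,aq}^{-1}\in\mchiq(W(M))$ because one simply flips the single Weyl factor at $a$ from $Y_{1,a}$ to $Y_{1,aq^2}^{-1}$; and $mA_{1,aq}^{-1}\notin\mchiq(\L(M))$ because, restricted to that string, the resulting monomial has $u=0$ at (at least) two positions and is therefore not a ladder monomial. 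The symmetric correction applies to case (iv). With this fix your argument goes through.
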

\begin{proof}
This follows from the known closed forms for the $q$-characters of all simple finite-dimensional modules, and all Weyl modules, of $\uqslt$. Let $a_k\in \Cx$, $1\leq k\leq K\in \Z_{>0}$, and consider $M=\prod_{k=1}^KY_{1,a_k}\in\P^+$. The Weyl module $W(M)$ is isomorphic to a tensor product of fundamental representations \cite{CPweyl}, and 
$\chi_q\left(W(M)\right) = \prod_{k=1}^K( Y_{1,a_k} + Y_{1,a_kq^2}^{-1})$. The simple module $\L(M)$ is isomorphic to a tensor product of evaluation modules \cite{CPsl2}, as follows. First recall that the set $S_{r}(a) := \left \{a q^{-r+1}, aq^{-r+3}, \dots, aq^{r-1} \right \}$ is called the \emph{$q$-string} of length $r\in \Z_{\geq 0}$ centred on $a\in\Cx$, and that two $q$-strings are said to be in \emph{general position} if one contains the other or their set-theoretic union is not a $q$-string. Let $m_{a}^{(r)}:= Y_{1,aq^{-r+1}} Y_{1,aq^{-r+3}} \dots Y_{1,aq^{r-1}}$. There is a unique multiset of $q$-strings in pairwise general position, say $\{S_{r_t}(b_t): 1\leq t\leq T\}$, such that $M = \prod_{t=1}^T m_{b_t}^{(r_t)}$. Then $\chi_q(\L(M))= \prod_{t=1}^T \chi_q(\L(m_{b_t}^{(r_t)}))$, where  $\L(m_a^{(r)})$  is an evaluation module and
\be \chi_q(\L(m_a^{(r)})) = \left(Y_{1,aq^{-r+1}} Y_{1,aq^{-r+3}} \dots Y_{1,aq^{r-1}}\right)      
                      \left(1+ \sum_{t=0}^{r-1}     
             A^{-1}_{1,aq^r} A^{-1}_{1,aq^{r-2}} \dots A^{-1}_{1,aq^{r-2t}}\right).\ee      
Note that \emph{thin} simple finite-dimensional $\uqslt$-modules are precisely those whose $q$-strings are, in addition to being pairwise in general position, also pairwise disjoint. 
\end{proof}
\begin{cor}\label{tscor}
 A simple finite-dimensional $\uqslt$-module is thin if and only if it is special.\qed
\end{cor}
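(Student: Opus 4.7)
The plan is to deduce this corollary as a direct consequence of Lemma~\ref{thinsimplesl2lem}, whose proof identifies thin simple $\uqslt$-modules $\L(M)$ with precisely those whose Chari--Pressley $q$-string decomposition $M = \prod_t m_{b_t}^{(r_t)}$ has pairwise disjoint $q$-strings $S_{r_t}(b_t)$. This structural description, combined with the multiplicativity of $\chi_q$ on the Grothendieck ring, will supply both directions.

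For the forward direction (thin $\Rightarrow$ special), I would write $\chi_q(\L(M)) = \prod_t \chi_q(\L(m_{b_t}^{(r_t)}))$ and note that each factor has $m_{b_t}^{(r_t)}$ as its unique dominant monomial, with support contained in $S_{r_t}(b_t) \cup \{b_t q^{r_t+1}\}$. A short case check shows that pairwise disjointness of the $q$-strings, together with the general-position condition built into the CP decomposition, forces the factor supports themselves to be pairwise disjoint: if the upper boundary $b_t q^{r_t+1}$ were to lie in some other $S_{r_s}(b_s)$, one verifies case by case that either $S_{r_s}(b_s) \cap S_{r_t}(b_t) \neq \emptyset$ (contradicting disjointness) or $S_{r_s}(b_s) \cup S_{r_t}(b_t)$ is itself a $q$-string (contradicting general position). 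With disjoint supports, dominance of a product monomial reduces factorwise, so the unique dominant monomial of $\chi_q(\L(M))$ is $\prod_t m_{b_t}^{(r_t)} = M$; hence $\L(M)$ is special.

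For the converse (special $\Rightarrow$ thin) I would argue by contrapositive inside the same framework. If $\L(M)$ is not thin, the $q$-strings fail pairwise disjointness, and the general-position condition then forces either a proper containment $S_{r_s}(b_s) \subsetneq S_{r_t}(b_t)$ or equality between two strings. In the proper-containment case the strategy is to exhibit a second dominant monomial: the lowest (anti-dominant) monomial of $\chi_q(\L(m_{b_s}^{(r_s)}))$, multiplied by an appropriate intermediate monomial of $\chi_q(\L(m_{b_t}^{(r_t)}))$ whose $Y$-factors precisely cancel the $Y^{-1}$-factors of the former, yields a dominant monomial strictly below $M$ in the $l$-weight order, contradicting specialness. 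The main obstacle I expect is the case of coincident $q$-strings in the decomposition, where this direct cancellation only reproduces the top monomial: here one likely needs a finer analysis using the multiplicity structure of $\chi_q(\L(m_a^{(r)}))^k$ (or a Frenkel--Mukhin-style observation that in a special simple module the $q$-character is forced to satisfy the combinatorial conditions of the lemma, which automatically exclude multiplicities greater than one).
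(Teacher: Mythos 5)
Your forward direction (thin $\Rightarrow$ special) is fine, and it is essentially the argument the paper leaves implicit: by the closed formulas in the proof of Lemma~\ref{thinsimplesl2lem}, thinness amounts to the $q$-strings being pairwise disjoint, and your disjoint-support observation then gives a unique dominant monomial, namely $M$. The converse, however, is where your proposal cannot be completed, and the obstacle you flag at the end is not a technicality awaiting a ``finer analysis'' --- it is the point where the asserted equivalence itself breaks down. Take $M=Y_{1,a}^2$. Its string decomposition consists of two copies of $S_1(a)$, which \emph{are} in general position (one contains the other), so by Chari--Pressley \cite{CPsl2} the module $\L(M)\cong \L(Y_{1,a})\otimes\L(Y_{1,a})$ is simple and $\chi_q(\L(M))=(Y_{1,a}+Y_{1,aq^2}^{-1})^2$. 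This $q$-character has exactly one dominant monomial, $Y_{1,a}^2$, so $\L(M)$ is special in the sense defined in \S\ref{ssec:fdreps}, yet the $l$-weight $Y_{1,a}Y_{1,aq^2}^{-1}$ occurs with multiplicity $2$, so $\L(M)$ is not thin. Hence no multiplicity analysis of $\chi_q(\L(m_a^{(r)}))^k$ can rescue the implication ``special $\Rightarrow$ thin''; what genuinely follows from Lemma~\ref{thinsimplesl2lem} is the implication you proved, together with its anti-dominant analogue.

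The proper-containment branch of your argument is also incomplete as written. Your construction requires a monomial of the long factor whose $Y$-factors cancel the $Y^{-1}$-factors of the anti-dominant monomial of the short factor, and such a monomial need not exist: for $S_1(aq^2)\subsetneq S_3(a)$, i.e.\ $M=Y_{1,aq^{-2}}Y_{1,a}Y_{1,aq^2}^2$, the short factor's anti-dominant monomial is $Y_{1,aq^4}^{-1}$, but $Y_{1,aq^4}$ never occurs with positive exponent in $\chi_q(\L(m_a^{(3)}))$; multiplying the two factors out, $M$ is the unique dominant monomial while $Y_{1,aq^{-2}}Y_{1,a}Y_{1,aq^2}Y_{1,aq^4}^{-1}$ appears with multiplicity $2$ --- again special but not thin. (Your cancellation does work when the inner string sits strictly below the top of the outer one, e.g.\ $S_1(a)\subsetneq S_3(a)$ produces the second dominant monomial $Y_{1,aq^{-2}}Y_{1,a}$, and then the module is indeed neither thin nor special.) For comparison with the source: the paper gives no separate proof of this corollary --- it is asserted as immediate from Lemma~\ref{thinsimplesl2lem} --- and the examples above indicate that only the direction ``thin $\Rightarrow$ special'' (and ``thin $\Rightarrow$ anti-special'') is actually available from that lemma; the corollary is not invoked anywhere else in the paper, whose later arguments use Lemma~\ref{thinsimplesl2lem} directly. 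So the difficulty you isolated is real, and it lies in the statement rather than in your execution of the thin-to-special half.
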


It is helpful to picture the set of monomials $\mchiq(\L(M))$ of a thin $\uqslt$-module $\chi_q(\L(M))$ as follows. Suppose there is an $a\in \Cx$ such that $\mchiq(\L(M))\subset \Z[Y^{\pm 1}_{1,aq^{2k}}]_{k\in \Z}$. 
Consider a one-dimensional array of boxes, indexed by the integers, in which each box can be in one of three states: it can be empty ($\eb$), it can contain one black ball ($\bb$), or it can contain one white ball ($\wb$). 
Given a monomial $m\in \mchiq(\L(M))$ we place a black  ball at each site $k$ such that $u_{1,aq^{2k}}(m)=1$, a white ball at each site $k$ such that $u_{1,aq^{2k}}(m)=-1$, and leave the rest empty, to reach some configuration, for example
\be\nn\dots \eb\eb\bb\eb\eb\eb\eb\bb\bb\eb\eb\wb\eb\eb \dots.\ee 
By Lemma \ref{thinsimplesl2lem}, the pattern $\bb\wb$ never occurs, and  
we can generate all other monomials of $\chi_q(\L(M))$ by repeatedly performing moves of the following two types:
\begin{enumerate}[(a)]
\item Remove a black ball from site $k$ and fill the empty site $k+1$ with a white ball: $\bb\eb \to  \eb\wb$.
\item Remove a white ball from site $k+1$ and fill the empty site $k$ with a black ball: $ \eb\wb \to \bb\eb$. 
\end{enumerate}
In the present paper we identify, in types A and B, a class of representations, which we shall call \emph{snake modules}, \S\ref{sec:snakes}, for which a generalization of this  ball-and-box realization of monomials exists.

\subsection{Effect of $x_{i,r}^\pm$ on $l$-weight states}
In the proof of Theorem \ref{thmA} we shall also need the following proposition, which is a quantum-affine analog of the familiar statement in the representation theory of $\g$, or of $\uqg$, that if $\ket\omega$ is a vector of weight $\omega\in P$ then $x_i^\pm\on\ket\omega$ is either zero or has weight $\omega\pm\alpha_i$. 
\begin{prop}\label{Aprop}
Let $V$ be a finite-dimensional $\uqgh$-module and $m,m'\in\mchiq(V)$. Let $\ket m\in \ker\left(\phi^\pm_i(u)-\gamma(m)^\pm_i(u)\right)\subset V_m$ for all $i\in I$. Then, for all $j\in I$, at least one of the following holds: 
\begin{enumerate}
\item there is an $a\in\Cx$ such that $m'=mA_{j,a}$ (resp. $m'=mA_{j,a}^{-1}$), or
\item for all $r\in\Z$, when $x^+_{j,r}\on \ket m$ (resp. $x^-_{j,r}\on \ket m$) is decomposed into $l$-weight spaces, \confer (\ref{lwdecomp}), its component in  $V_{m'}$ is zero.
\end{enumerate}
\end{prop}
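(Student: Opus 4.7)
The plan is to exploit the Drinfeld commutation between the Cartan currents $\phi^\pm_i(u)$ and the root currents $x^\pm_j(z):=\sum_{r\in\Z}x^\pm_{j,r}z^{-r-1}$. From the relation $[h_{i,n},x^\pm_{j,m}]=\pm\tfrac{1}{n}[nB_{ij}]_q c^{\mp|n|/2}x^\pm_{j,n+m}$ together with the exponential definition \eqref{phidef}, one derives, on a type-$1$ module where $c$ acts as $1$, an identity of the form
\[
\phi^\pm_i(u)\,x^\pm_j(z)=\Psi^\pm_{ij}(u,z)\,x^\pm_j(z)\,\phi^\pm_i(u),
\]
where $\Psi^\pm_{ij}(u,z)$ is an explicit rational function of $u/z$ (with one zero and one pole in $z$) determined entirely by $B_{ij}$.

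Applying this to the joint eigenvector $\ket m$ and using the hypothesis $\phi^\pm_i(u)\on\ket m=\gamma(m)^\pm_i(u)\ket m$ (a \emph{true}, not merely generalized, eigenvalue), I obtain
\[
\phi^\pm_i(u)\,x^\pm_j(z)\on\ket m=\Psi^\pm_{ij}(u,z)\,\gamma(m)^\pm_i(u)\,x^\pm_j(z)\on\ket m.
\]
Next, decompose $x^\pm_j(z)\on\ket m=\sum_{m''}v_{m''}(z)$ with $v_{m''}(z)\in V_{m''}$; each $l$-weight space is $\phi^\pm_i(u)$-stable, so projecting onto $V_{m'}$ yields
\[
\phi^\pm_i(u)\,v_{m'}(z)=\Psi^\pm_{ij}(u,z)\,\gamma(m)^\pm_i(u)\,v_{m'}(z).
\]
Hence $v_{m'}(z)$ is itself a true eigenvector of every $\phi^\pm_i(u)$; comparison with the generalized eigenvalue $\gamma(m')^\pm_i(u)$ on $V_{m'}$ forces
\[
\gamma(m'/m)^\pm_i(u)=\Psi^\pm_{ij}(u,z)
\]
for every $i\in I$ and every value of $z$ at which $v_{m'}(z)\neq 0$.

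The final step uses the defining formula \eqref{adef} for $A_{j,a}$: a direct calculation shows that the tuple $\bigl(\gamma(A_{j,a}^{\pm 1})_i^\pm(u)\bigr)_{i\in I}$ coincides with $\bigl(\Psi^\pm_{ij}(u,z)\bigr)_{i\in I}$ for a specific value of $z$ (a specific monomial shift of $a$), and that no other monomial $n\in\P$ reproduces the same tuple. Since $x^\pm_{j,r}\on\ket m$ is a mode of $x^\pm_j(z)\on\ket m$, its component in $V_{m'}$ must vanish unless $m'/m=A_{j,a}^{\pm 1}$ for some $a\in\Cx$, which is exactly alternative (1).

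The main obstacle is this concluding uniqueness check: one must verify that among all monomials in the $(Y_{i,b}^{\pm 1})$, only the single shifts $A_{j,a}^{\pm 1}$ reproduce the rational function $\Psi^\pm_{ij}(u,z)$ simultaneously for all $i\in I$. This amounts to carefully matching the zeros and poles of the two rational-function systems in $u$ and exploiting the algebraic independence of the $Y_{i,b}$, as encoded by the precise combinatorial shape of $A_{j,a}$ in \eqref{adef}.
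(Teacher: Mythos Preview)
Your overall strategy matches the paper's: exploit the Drinfeld commutation between $\phi^\pm_i(u)$ and $x^\pm_j(v)$, apply it to the genuine eigenvector $\ket m$, project to $V_{m'}$, and compare with $\gamma(m')^\pm_i(u)$. The gap is at the step ``Hence $v_{m'}(z)$ is itself a true eigenvector \dots\ forces $\gamma(m'/m)^\pm_i(u)=\Psi^\pm_{ij}(u,z)$ for every value of $z$ at which $v_{m'}(z)\neq 0$.'' The variable $z$ here is formal: $x^\pm_j(z)$ is a generating series, and $v_{m'}(z)\in V_{m'}[[z,z^{-1}]]$ cannot be evaluated at a complex number. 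Because $\Psi^\pm_{ij}(u,z)$ depends on $z$, the $z$-modes of the equation $\phi^\pm_i(u)v_{m'}(z)=\Psi^\pm_{ij}(u,z)\gamma(m)^\pm_i(u)v_{m'}(z)$ are coupled; no individual coefficient $w_r$ is forced to be a $\phi$-eigenvector, so you cannot read off a $z$-independent eigenvalue and match it against $\gamma(m')^\pm_i(u)$. Equivalently, $\C[[z,z^{-1}]]$ is not an integral domain, so an equation of the form $\bigl(\gamma(m')(u)-\Psi(u,z)\gamma(m)(u)\bigr)f(z)=0$ with $f\neq 0$ does not let you cancel $f$.

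The paper deals with precisely this point. It keeps the commutation relation in its polynomial form $(1-q^{B_{ij}}uv)\phi^+_i(u)x^+_j(v)=(q^{B_{ij}}-uv)x^+_j(v)\phi^+_i(u)$, chooses a basis of $V_{m'}$ in which the $\phi$'s act lower-triangularly (this is where the fact that $V_{m'}$ is only a \emph{generalized} eigenspace is handled, a step you omit), and passes to the top nonzero component to obtain a scalar equation $0=\lambda_k(v)\sum_{n\geq 0}u^n(b^{(i)}_n+c^{(i)}_n v)$. For each $n$ this is a first-order recurrence on the coefficients of the formal Laurent series $\lambda_k(v)$, and nonzero solutions exist only when all the ratios $b^{(i)}_n/c^{(i)}_n$ agree, equal to some $-a\in\Cx$ independent of $n$ and $i$; that common $a$ is exactly what yields $\gamma(m')/\gamma(m)=\gamma(A_{j,a})$. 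So the obstacle you flag (the uniqueness check for $A_{j,a}$) is not the hard part; what is missing is the triangular-basis reduction together with this recurrence analysis on the formal series.
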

\begin{proof}
The defining relations between $x^{\pm}_{j,r}$ and $h_{i,r}$, $i\in I$, (\ref{hxpm}) are equivalent to
\bea    
(1- q^{\pm B_{ij}}uv)\phi^+_i(u)\,  x^\pm_j(v) &=&  \, (q^{\pm B_{ij}}- uv) \, x^\pm_j(v) \, \phi^+_i(u) , \nn\\    
(1/(uv)- q^{\pm B_{ij}})\phi^-_i(u)\,  x^\pm_j(v) &=&  \, (q^{\pm B_{ij}}/(uv)-1) \, x^\pm_j(v) \, \phi^-_i(u) \nn\eea    
where $\phi_i^\pm(u)$ is as in (\ref{phidef}) and $x^\pm_j(v)$ is the formal Laurent series $x^\pm_j(v) := \sum_{n \in \mathbb Z} x^{\pm}_{j,n} v^{-n}$.    
Consider for definiteness the case of $x^+_{j}(v)$ ($x^-_j(v)$ is similar).
Let $(\ket{m',k})_{1\leq k\leq \dim V_{m'}}$ be a basis of the $l$-weight space $V_{m'}$. We have 
\be x^+_j(v)\on \ket m = \sum_{k=1}^{\dim V_{m'}}\ket{m',k} \lambda_{k}(v)+\dots ,\nn\ee
where ``$\dots$'' indicates terms in other $l$-weight spaces and, for each $k$, $\lambda_{k}(v)$ is a formal Laurent series in $v$ with complex coefficients. We can suppose that the basis $(\ket{m',k})_{1\leq k\leq \dim V_{m'}}$ has been chosen so that the action of the $\phi^\pm_{i,\pm r}$ (mutually commuting on $V$)  is lower triangular, i.e. for all $(i,r)\in I\times \Z_{\geq 0}$ and $1\leq k\leq \dim V_{m'}$, \be(\phi_{i,\pm r}^\pm-\gamma(m')_{i,\pm r}^\pm)\on\ket{m',k} \in \Span_{k'> k} \{\ket{m',k'}\}.\label{lowertriang}\ee 

Suppose that statement (2) of the proposition is false. Then there is a $k$ with $1\leq k\leq \dim V_{m'}$ such that $\lambda_{k}(v) \neq 0$. By considering the smallest such $k$ we may assume that for all $1\leq \ell<k$, $\lambda_{\ell}(v)= 0$. Now, since $\ket m\in \ker\left(\phi^+_i(u)-\gamma(m)^+_i(u)\right)$, we have
\bea 0&=& (q^{B_{ij}}-uv) x^+_j(v) \left(\phi^+_i(u) - \gamma(m)^+_i(u) \right) \on \ket m \nn\\
&=& 
\left( (1-q^{B_{ij}}uv)\phi^+_i(u) -  (q^{B_{ij}}-uv) \gamma(m)^+_i(u) \right) x^+_j(v) \on \ket m .\nn\eea
By taking the $\ket{m',k}$ component of this expression and using (\ref{lowertriang}), we have
\be 0= \left( (1-q^{B_{ij}}uv)\gamma^+_i(m')(u) -  (q^{B_{ij}}-uv) \gamma^+_i(m)(u) \right) \lambda_{k}(v) 
\label{Xeqn}.\ee
This must hold for all $i\in I$. For each $i\in I$, (\ref{Xeqn}) is an equation of the form $0=\lambda_k(v) \sum_{n=0}^\8 u^n (b^{(i)}_n + c^{(i)}_nv)$  for the formal Laurent series $\lambda_k(v)$,  with $b^{(i)}_n,c^{(i)}_n\in \C$ for all $n\in\Z_{\geq 0}$. Equivalently, for each $i\in I$, it is a countably infinite set of first order recurrence relations on the series coefficients of $\lambda_k(v)$. There are non-zero solutions if and only if there is an $a\in\Cx$ such that $b^{(i)}_n/c^{(i)}_n=-a$ for all $n\in \Z_{\geq 0}$ and all $i\in I$. That is,
\be \gamma(m')^+_i(u) \left(\gamma(m)^+_i(u)\right)^{-1} = q^{B_{ij}} \frac{1-q^{-B_{ij}} u a}{1-q^{B_{ij}}u a} \equiv \gamma(A_{j,a})^+_i(u)\nn\ee
as an equality of power series in $u$, \confer (\ref{adef}) and (\ref{lwdef}). Similar reasoning concerning $\phi^-$ yields the analogous statement for $\gamma(A_{j,a})^-_i(u)$. So we have (1), as required. \end{proof}

\subsection{Criteria for a correct thin special $q$-character}
We can now state the first main result of the paper.
\begin{thm}\label{thmA}
Let $m_+\in \P^+$. Suppose that $\mc M\subset \P$ is a finite set of distinct monomials such that:
\begin{enumerate}[(i)]
\item $\{m_+\} = \P^+ \cap \mc M$; \label{monlydom} 
\item For all $m\in \mc M$ and all $(i,a)\in I \times \Cx$, if $mA_{i,a}^{-1}\notin\mc M$ then $mA_{i,a}^{-1} A_{j,b}\notin \mc M$ unless $(j,b)=(i,a)$;\label{onewayback}
\item For all $m\in \mc M$ and all $i\in I$ there exists
$M\in \mc M$, $i$-dominant, such that
\be\nn \chi_q(\L(\beta_i(M))) = \sum_{m'\in m \Z[A_{i,a}^{\pm 1}]_{a\in \Cx}\cap \mc M} \beta_i(m') .\ee\label{inthinsimple}
\end{enumerate}
Then
\be \label{snch} \chi_q(\L(m_+)) = \sum_{m\in \mc M} m \ee 
and in particular $\L(m_+)$ is thin and special.
\end{thm}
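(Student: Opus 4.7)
The plan is to prove the identity $\chi_q(\L(m_+)) = \sum_{m \in \mc M} m$, from which thinness and specialness follow at once: specialness from condition (i), and thinness from the unit multiplicities. I would establish this by showing mutual containment and equality of multiplicities between $\mc M$ and $\mchiq(\L(m_+))$, organizing the argument through the $\uqsl{i}$-restrictions controlled by condition (iii).

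As a preliminary, I would prove a connectivity lemma: every $m \in \mc M$ is joined to $m_+$ by a chain $m_+ = m_0 > m_1 > \cdots > m_k = m$ inside $\mc M$ where each $m_{t+1} = m_t A_{j_t,a_t}^{-1}$ differs from $m_t$ by a single $A$-factor. Indeed, if $m$ is not dominant, pick $j \in I$ with $m$ not $j$-dominant. Condition (iii) together with Corollary \ref{tscor} identifies the image under $\beta_j$ of $m \Z[A_{j,a}^{\pm 1}]_{a\in\Cx} \cap \mc M$ with $\chi_q(\L(\beta_j(M)))$ for some $j$-dominant $M \in \mc M$; this character is thin special, and Lemma \ref{thinsimplesl2lem} then supplies a single-$A$ up-step inside this coset from $m$ towards $M$. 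Iterating and using finiteness of $\mc M$ yields the chain, terminating at the unique dominant element $m_+$ by condition (i).

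Next, for the inclusion $\mc M \subseteq \mchiq(\L(m_+))$ and the lower bound on multiplicities, I would walk down such chains inside $\L(m_+)$. At each $j$-dominant $l$-weight $M$ encountered, the $\uqsl{j}$-submodule $U_M \subseteq \L(m_+)$ generated by an $l$-weight vector $\ket{M} \in V_M$ is a highest $\uqsl{j}$-$l$-weight module with highest $l$-weight $\beta_j(M)$. By Proposition \ref{Aprop} applied to $x_{j,r}^-$, every $l$-weight of $U_M$ has the form $M \cdot \prod A_{j,c}^{-1}$; combining this with the upper bound $\chi_q(U_M) \leq \chi_q(W(\beta_j(M)))$ and with condition (iii) forces the $l$-weights of $U_M$ (lifted back to $\P$) to lie in $\mc M$ and to appear with multiplicity one in $\L(m_+)$. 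Sliding along the chain and restarting at each new $j$-dominant monomial exhausts $\mc M$.

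Finally, for the reverse inclusion and the upper bound on multiplicities, suppose some $m^* \in \mchiq(\L(m_+))$ is either absent from $\mc M$ or has $\dim V_{m^*}$ exceeding the multiplicity of $m^*$ in $\mc M$; choose such an $m^*$ of maximal weight in $\mchiq(\L(m_+))$. By Proposition \ref{Aprop} applied to $x_{j,r}^-$ and the generating vector $\ket{m_+}$, a vector in $V_{m^*}$ must be reachable from some $V_{m'}$ with $m' = m^* A_{j,a}$, and by maximality $m' \in \mc M$ with $\dim V_{m'} = 1$. Condition (iii) pins the $\uqsl{j}$-restriction in the $j$-coset of $m'$ to be thin special, while condition (ii) prevents any detour $m' \to m^* \to m^* A_{k,b}$ re-entering $\mc M$ for $(k,b) \neq (j,a)$. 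An extra $l$-weight at $m^*$ would thus force an extra monomial in $\chi_q(U_M)$ beyond the predicted thin character, a contradiction. The main obstacle is precisely this last step: gluing the $\uqsl{j}$-slices together consistently across different $j \in I$ by means of condition (ii), so that no unlisted $l$-weight can sneak into $\L(m_+)$.
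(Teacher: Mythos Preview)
Your overall architecture matches the paper's: both argue by induction on the depth $v(m m_+^{-1})$, both use condition (iii) to control the $\uqsl{j}$-slices, and both invoke condition (ii) together with Proposition \ref{Aprop} to block unwanted routes into a bad $l$-weight. Your connectivity lemma is exactly the paper's preliminary step establishing $\mc M\subset m_+\Q^-$, and your Step~2 (the inclusion $\mc M\subseteq \mchiq(\L(m_+))$) is essentially correct once reorganised as a depth induction: the $\uqsl{j}$-submodule $U_M$ generated by the one-dimensional $V_M$ surjects onto $\L(\beta_j(M))$, and since $\beta_j$ is injective on a fixed $j$-coset this forces the monomials of $\mc M$ in that coset to appear in $\chi_q(\L(m_+))$.

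The gap is in Step~3. You write that ``an extra $l$-weight at $m^*$ would thus force an extra monomial in $\chi_q(U_M)$ beyond the predicted thin character, a contradiction'', but this is not a contradiction: $U_M$ is a priori only a quotient of the Weyl module $W(\beta_j(M))$, so it may well contain $\beta_j(m^*)$ even when $\beta_j(m^*)\notin\mchiq(\L(\beta_j(M)))$. You are implicitly assuming that the $\uqsl{j}$-restriction of $\L(m_+)$ on the coset \emph{equals} $\L(\beta_j(M))$, but condition (iii) only says this about $\mc M$, not about $\L(m_+)$; that identification is precisely what remains to be proved. The paper closes this gap in two different ways depending on the case. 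When $m^*\in\mc M$ (your multiplicity case), it invokes the homomorphism $\tau_i$ of \cite{FM}: the $i$-restriction of $\chi_q(\L(m_+))$ must decompose into genuine $\uqsl{i}$-characters, and by induction the only available $i$-dominant top is $M$ with coefficient $1$, so the coefficient of $m^*$ is forced to be $1$. When $m^*\notin\mc M$, the paper does \emph{not} try to show $U_M$ is simple; instead it runs a two-sided argument: an eigenvector $\ket{m^*}$ must map via $x^+_{i,r}$ onto the one-dimensional $V_m$, yet the projection $U_M\twoheadrightarrow\L(\beta_i(M))$ kills any preimage of $\ket{m^*}$ under $x^-_{i,r}$ (since $\beta_i(m^*)$ is absent from the simple quotient), and these two facts are incompatible. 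Your proposal does not supply either mechanism, and without one of them the upper bound on multiplicities does not follow.
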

\begin{proof}
We begin by establishing that (\ref{monlydom}) and (\ref{inthinsimple}) together imply that 
\be \mc M \subset m_+ \Q^- \label{incone}.\ee
Indeed, Property (\ref{monlydom}) states that for every monomial $m\neq m_+$ in $\mc M$ there is an $i\in I$ such that $m$ is not $i$-dominant. So there is an $a\in \Cx$ such that $u_{i,aq^{r_i}}(m)<0$ and (by choice of $a$) $u_{i,aq^{-r_i}}(m)\geq 0$. Property (\ref{inthinsimple}) states that $m$ is a monomial of the $q$-character of a thin simple finite-dimensional $\uqsl i$-module. By Lemma \ref{thinsimplesl2lem}, it must therefore be that $u_{i,aq^{r_i}}(m')=-1$, $u_{i,aq^{-r_i}}(m') = 0$, and $mA_{i,a}\in \mc M$. The relation (\ref{incone}) follows by recursion since $\mc M$ is assumed finite. 

We now prove  (\ref{snch}). 
For each $n\in \Z_{\geq 0}$ let $\Z[A^{-1}_{i,a}]_{(i,a)\in I\times\Cx}^{(> n)}$ denote the ring (without identity) of polynomials in the variables $\{A_{i,a}^{-1}:(i,a)\in I \times \Cx\}$ whose monomials all have degree strictly greater than $n$.
To prove (\ref{snch}) it is sufficient to establish the following claim for all $n\in\Z_{\geq 0}$.

\emph{Claim:} The equality (\ref{snch}) holds modulo $m_+ \Z[A^{-1}_{i,a}]_{(i,a)\in I\times \Cx}^{(> n)}$.

We proceed by induction on $n$. The claim is true for $n=0$ by (\ref{imchiq}) and (\ref{incone}). 
So for the inductive step, let $n\in \Z_{>0}$ and suppose the claim is true for $n-1$. 

It follows from Proposition \ref{Aprop} that all monomials $m'$ in $\chi_q(\L (m_+))$ such that $v(m'm_+^{-1})=n$ are of the form $m A_{i,a}^{-1}$ for some $m\in\mchiq(\L (m_+))$ with $v(mm_+^{-1})=n-1$ and some $(i,a)\in I\times \Cx$. For suppose not: then by Proposition \ref{Aprop}, $\L(m_+)_{m'}$ contains a highest $l$-weight vector and so generates a proper submodule -- a contradiction. Therefore by the inductive hypothesis all monomials  $m'$ in $\chi_q(\L (m_+))$ such that $v(m'm_+^{-1})=n$ are of the form $m A_{i,a}^{-1}$ for some $m\in\mc M$ with $v(mm_+^{-1})=n-1$ and some $(i,a)\in I\times \Cx$.
As we noted above, every monomial $m'$ in $\mc M$ such that $v(m'm_+^{-1})=n$ is also of the form $mA_{i,a}^{-1}$ for some $m\in \mc M$ with $v(mm_+^{-1})=n-1$ and some $(i,a)\in I\times \Cx$. 

Consequently, it is enough to consider monomials of the form  $mA_{i,a}^{-1}$ for some $m\in \mc M$  with $v(mm_+^{-1})=n-1$ and some $(i,a)\in I\times \Cx$.
Let $M$ be the unique $i$-dominant monomial in $m\Z[A_{i,a}]_{a\in \Cx}\cap \mc M$. Property (\ref{inthinsimple}) asserts that such an $M$ exists (possibly $m=M$) and  moreover that $\chi_q(\L(\beta_i(M))) = \sum_{m'\in m \Z[A_{i,a}^{\pm 1}]_{a\in \Cx}\cap \mc M} \beta_i(m')$, so that the simple $\uqsl i$-module $\L(\beta_i (M))$ is thin. Since $\beta_i(m)$ is a monomial of a thin simple $\uqsl i$-module, Lemma \ref{thinsimplesl2lem} implies that exactly one of the following three cases applies:
\begin{enumerate}[(I)]
\item $u_{i,aq^{-r_i}}(m) = 1$,  $u_{i,aq^{r_i}}(m) = 0$, and $\beta_i(mA_{i,a}^{-1})\in \mchiq(\L(\beta_i(M)))$.
\item $u_{i,aq^{-r_i}}(m) = 1$,  $u_{i,aq^{r_i}}(m) = 1$, and $\beta_i(mA_{i,a}^{-1})\in\mchiq(W(\beta_i(M))) \setminus \mchiq(\L(\beta_i(M)))$.
\item $u_{i,aq^{-r_i}}(m) \leq 0$ and $\beta_i(mA_{i,a}^{-1})\notin\mchiq(W(\beta_i(M)))$.
\end{enumerate}
We shall now complete the inductive step by showing that in case (I), $mA_{i,a}^{-1}$ appears, with coefficient exactly 1, on both sides of (\ref{snch}), while in cases (II) and (III), $mA_{i,a}^{-1}$ does not appear on either side of (\ref{snch}). 

First observe that 
\be\nn mA_{i,a}^{-1} \Z[A_{i,b}]^{(>0)}_{b\in \Cx} \cap \mchiq(\L(m_+)) = mA_{i,a}^{-1} \Z[A_{i,b}]^{(>0)}_{b\in \Cx} \cap \mc M,\ee 
by the inductive assumption, and that $M$ is the unique $i$-dominant monomial in this set.

Now consider case (I). By Property (\ref{inthinsimple}), $mA_{i,a}^{-1}\in\mc M$ (note that $\beta_i$ is injective when restricted to $m\Z[A_{i,a}^{\pm 1}]_{(i,a)\in I\times\Cx}$). By the injectivity and property (\ref{tauj})  of the homomorphism $\tau_i$ we deduce that $m A_{i,a}^{-1}$ must be a monomial of $\chi_q(\L (m_+))$. The coefficient of $\beta_i(mA_{i,a}^{-1})$ in $\chi_q(\L(\beta_i(M)))$ is 1, so therefore $mA_{i,a}^{-1}$ must have coefficient exactly 1 in $\chi_q(\L(m_+))$, for if it appeared with a larger coefficient it would not be part of a consistent $\uqsl i$-character.

Now consider case (II). By Property (\ref{inthinsimple}), $mA_{i,a}^{-1}\notin \mc M$. Hence by Property (\ref{onewayback}), $mA_{i,a}^{-1} A_{j,b}\notin \mc M$ unless $(j,b)=(i,a)$. If $mA_{i,a}^{-1} A_{j,b}$ is not in $m_+ \Q^-$ then it is not in $\mchiq(\L(m_+))$ by (\ref{imchiq}). If $mA_{i,a}^{-1} A_{j,b}$ is in $m_+\Q^-$ then $v(mA_{i,a}^{-1} A_{j,b}m_+^{-1})=n-1$ and we can use the inductive assumption. Therefore 
\be\label{nin}mA_{i,a}^{-1} A_{j,b}\notin \mchiq(\L(m_+))\quad\text{ unless }\quad  (j,b) = (i,a).\ee  
Now suppose, for a contradiction, that $m':=mA_{i,a}^{-1}\in\mchiq(\L(m_+))$.  Then we can pick a non-zero $\ket{m'}\in \ker(\phi_{i}^\pm(u)-\gamma^\pm_i(m')(u))\subseteq \L (m_+)_{m'}$. By Proposition \ref{Aprop}, for all $r\in \Z$, $x^+_{j,r} \on \ket{m'} = 0$ for all $j\neq i$, and $x^+_{i,r}\on\ket{m'}\in (L m_+)_m$. If $x^+_{i,r}\on \ket {m'} =0$ for all $r\in \Z$ then $\ket {m'}$ generates a proper submodule in $\L (m_+)$: a contradiction since $\L (m_+)$ is simple. So for some $r\in \Z$, $x^+_{i,r} \on \ket{m'}$ is non-zero and spans the one-dimensional (by the inductive assumption) $l$-weight space $\L (m_+)_m$. 
Therefore $\ket{m'}\notin \Span_{r\in \Z} x_{i,r}^- (\L (m_+)_m)$, because $\L(\beta_i(M))$ is by definition irreducible, $\beta_i(m)$ appears in its $q$-character, and $\beta_i(m')$ does not. Now, if $m''\in \mchiq(\L(m_+))$ and $j\in I$ are such that $\ket{m'}\in \Span_{r\in \Z} x_{j,r}^{-}\on \L(m_+)_{m''}$ then $\wt(m'') = \wt(m')+\alpha_j$ and hence $v(m'' m_+^{-1})=n-1$. So, by the inductive assumption, $\dim(\L(m_+)_{m''})=1$, and thus Proposition \ref{Aprop} applies to any $\ket{m''}\in \L(m_+)_{m''}$. Hence, by (\ref{nin}),  $x_{j,r}^-\ket{m''}$ has zero component in $\L(m_+)_{m'}$ for all $m''\neq m$. So $\ket{m'}\notin \Span_{i\in I,r\in \Z} x_{i,r}^- (\L (m_+))$: a contradiction. 

Finally consider case (III). By Property (\ref{inthinsimple}), $mA_{i,a}^{-1}\notin\mc M$. Now $mA_{i,a}^{-1}$ is not $i$-dominant since $u_{i,aq^{-r_i}}(mA_{i,a}^{-1}) =-1$. But $\beta_i(mA_{i,a}^{-1})$ is not in the $q$-character of the Weyl module $W(\beta_i(M))$, and, recall, $M$ is the unique $i$-dominant monomial in $mA_{i,a}^{-1} \Z[A_{i,b}]_{b\in \Cx}\cap \mc M$. Therefore $mA_{i,a}^{-1}$ cannot appear in $\chi_q(\L(m_+))$ because it is not part of a consistent $\uqsl i$-character.

This completes the inductive step, and we have therefore established the claim above, for all $n\in\Z_{\geq 0}$, and hence the equality (\ref{snch}). Finally, it follows that $\L(m_+)$ is manifestly thin, and it is special by Property (\ref{monlydom}).
\end{proof}

It is very plausible that the conditions of Theorem \ref{thmA} could be weakened: for example, we suspect that (\ref{onewayback}) could be dropped, and that (iii) could be replaced by the demand that $m \Z[A_{i,a}^{\pm 1}]_{a\in \Cx}\cap \mc M$ should be a consistent thin $\uqsl i$-character.

Let us stress that Theorem \ref{thmA} requires no \emph{a priori} knowledge about the representation $\L(m_+)$. One way to construct the set $\mc M$ of monomials is to use the $q$-character algorithm of \cite{FM}. Starting with any dominant monomial $m_+$, the algorithm  -- provided it does not fail, in the sense explained in \cite{FM} -- produces a Laurent polynomial, $\chi_{FM}(m_+)\in \Z\P$. It is then a finite computational check to determine whether the monomials of $\chi_{FM}(m_+)$ satisfy the conditions of Theorem \ref{thmA}. If they do then 
\be\chi_{FM}(m_+)= \chi_q(\L(m_+)).\nn\ee
It they do not, then $m_+$ does not fall within the scope of Theorem \ref{thmA}.

\begin{figure} \caption{\label{snakeposfig} Each of the points $\scriptscriptstyle\square$ (resp. $\scriptscriptstyle\blacksquare$) is in snake position (resp. minimal snake position) to the point marked $\circ$.}
\be\nn\begin{tikzpicture}[scale=.5,yscale=-1]
\draw[help lines] (0,0) grid (5,6);
\foreach \y in {1,2,3,4,5,6,7} {\node at (-1,\y-1) {$\scriptstyle\y$};}
\draw (1,-1) -- (4,-1);
\foreach \x in {1,2,3,4} {\filldraw[fill=white] (\x,-1) circle (2mm) node[above=1mm] {$\scriptstyle\x$}; }
\draw[thick] (2,0) circle (1.5mm);
\foreach \x/\y in {1/3,2/2,3/3,4/4} 
{\node[regular polygon, regular polygon sides=4,draw,fill=black,inner sep=.3mm] at (\x,\y) {};}
\foreach \x/\y in {1/5,2/4,2/6,3/5,4/6} 
{\node[regular polygon, regular polygon sides=4,draw,fill=white,inner sep=.3mm] at (\x,\y) {};}
\end{tikzpicture}
\nn\ee
\be
\begin{tikzpicture}[scale=.35,yscale=-1]
\draw[help lines] (0,0) grid (14,18);
\draw (2,-1) -- (6,-1); \draw (8,-1) -- (12,-1);
\draw[double,->] (6,-1) -- (6.8,-1); \draw[double,->] (8,-1) -- (7.2,-1);
\filldraw[fill=white] (7,-1) circle (2mm) node[above=1mm] {$\scriptstyle 4$};
\foreach \x in {1,2,3} {
\filldraw[fill=white] (2*\x,-1) circle (2mm) node[above=1mm] {$\scriptstyle\x$}; 
\filldraw[fill=white] (2*7-2*\x,-1) circle (2mm) node[above=1mm] {$\scriptstyle\x$}; }
\foreach \y in {0,2,4,6,8,10,12,14,16,18} {\node at (-1,\y) {$\scriptstyle\y$};}
\draw[thick] (4,2) circle (2mm);
\foreach \x/\y in {2/8,4/6,4/10,6/8,6/12,7/9,7/13,2/12,2/16,4/14,4/18,6/16,7/17} 
{\node[regular polygon, regular polygon sides=4,draw,fill=white,inner sep=.3mm] at (\x,\y) {};}
\foreach \x/\y in {2/8,4/6,6/8,7/9} 
{\node[regular polygon, regular polygon sides=4,draw,fill=black,inner sep=.3mm] at (\x,\y) {};}
\end{tikzpicture}
\begin{tikzpicture}[scale=.35,yscale=-1]
\draw[help lines] (0,0) grid (14,18);
\draw (2,-1) -- (6,-1); \draw (8,-1) -- (12,-1);
\draw[double,->] (6,-1) -- (6.8,-1); \draw[double,->] (8,-1) -- (7.2,-1);
\filldraw[fill=white] (7,-1) circle (2mm) node[above=1mm] {$\scriptstyle 4$};
\foreach \x in {1,2,3} {
\filldraw[fill=white] (2*\x,-1) circle (2mm) node[above=1mm] {$\scriptstyle\x$}; 
\filldraw[fill=white] (2*7-2*\x,-1) circle (2mm) node[above=1mm] {$\scriptstyle\x$}; }
\foreach \y in {0,2,4,6,8,10,12,14,16,18} {\node at (-1,\y) {$\scriptstyle\y$};}
\begin{scope}[xscale=-1,xshift=-14cm,yshift=-2cm]
\draw[thick] (4,2) circle (2mm);
\foreach \x/\y in {2/8,4/6,4/10,6/8,6/12,7/9,7/13,2/12,2/16,4/14,4/18,6/16,7/17,6/20,2/20} 
{\node[regular polygon, regular polygon sides=4,draw,fill=white,inner sep=.3mm] at (\x,\y) {};}
\foreach \x/\y in {2/8,4/6,6/8,7/9} 
{\node[regular polygon, regular polygon sides=4,draw,fill=black,inner sep=.3mm] at (\x,\y) {};}
\end{scope}
\end{tikzpicture}
\nn\ee\be\begin{tikzpicture}[scale=.35,yscale=-1]
\draw[help lines] (0,0) grid (14,18);
\draw (2,-1) -- (6,-1); \draw (8,-1) -- (12,-1);
\draw[double,->] (6,-1) -- (6.8,-1); \draw[double,->] (8,-1) -- (7.2,-1);
\filldraw[fill=white] (7,-1) circle (2mm) node[above=1mm] {$\scriptstyle 4$};
\foreach \x in {1,2,3} {
\filldraw[fill=white] (2*\x,-1) circle (2mm) node[above=1mm] {$\scriptstyle\x$}; 
\filldraw[fill=white] (2*7-2*\x,-1) circle (2mm) node[above=1mm] {$\scriptstyle\x$}; }
\foreach \y in {0,2,4,6,8,10,12,14,16,18} {\node at (-1,\y) {$\scriptstyle\y$};}
\begin{scope}[yshift=1cm]
\draw[thick] (7,0) circle (2mm);
\foreach \x/\y in {7/2,7/6,7/10,7/14,8/5,8/9,8/13,10/7,10/11,12/9,8/17,10/15,12/13,12/17} 
{\node[regular polygon, regular polygon sides=4,draw,fill=white,inner sep=.3mm] at (\x,\y) {};}
\foreach \x/\y in {7/2,8/5,10/7,12/9} 
{\node[regular polygon, regular polygon sides=4,draw,fill=black,inner sep=.3mm] at (\x,\y) {};}
\end{scope}
\end{tikzpicture}
\begin{tikzpicture}[scale=.35,yscale=-1]
\draw[help lines] (0,0) grid (14,18);
\draw (2,-1) -- (6,-1); \draw (8,-1) -- (12,-1);
\draw[double,->] (6,-1) -- (6.8,-1); \draw[double,->] (8,-1) -- (7.2,-1);
\filldraw[fill=white] (7,-1) circle (2mm) node[above=1mm] {$\scriptstyle 4$};
\foreach \x in {1,2,3} {
\filldraw[fill=white] (2*\x,-1) circle (2mm) node[above=1mm] {$\scriptstyle\x$}; 
\filldraw[fill=white] (2*7-2*\x,-1) circle (2mm) node[above=1mm] {$\scriptstyle\x$}; }
\foreach \y in {0,2,4,6,8,10,12,14,16,18} {\node at (-1,\y) {$\scriptstyle\y$};}
\begin{scope}[yshift=3cm,xscale=-1,xshift=-14cm]
\draw[thick] (7,0) circle (2mm);
\foreach \x/\y in {7/2,7/6,7/10,7/14,8/5,8/9,8/13,10/7,10/11,12/9,10/15,12/13} 
{\node[regular polygon, regular polygon sides=4,draw,fill=white,inner sep=.3mm] at (\x,\y) {};}
\foreach \x/\y in {7/2,8/5,10/7,12/9} 
{\node[regular polygon, regular polygon sides=4,draw,fill=black,inner sep=.3mm] at (\x,\y) {};}
\end{scope}
\end{tikzpicture}
\nn\ee
\end{figure}
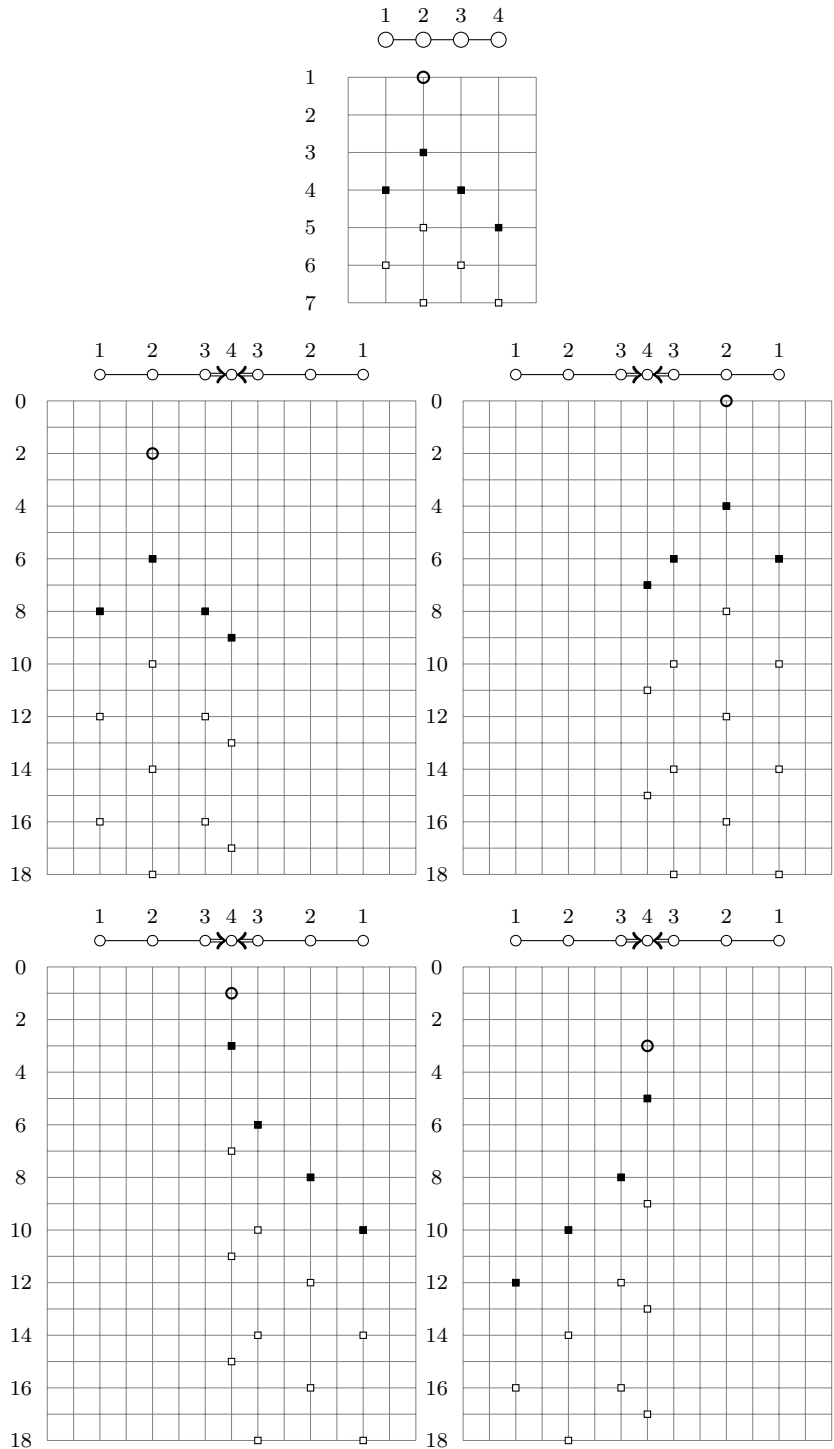

\section{Snake modules in types A and B}\label{sec:snakes}
In this section we introduce the class of modules to which the $q$-character formula of Theorem \ref{snakechar} below will pertain.
We specialize to types A and B: henceforth, $\g$ is either $\mf a_N$ or $\mf b_N$. 

\subsection{Notation, and the subring $\Z[Y^{\pm 1}_{i,k}]_{(i,k)\in\It}$} 
We define a subset $\It\subset I\times \Z$ as follows. 
\begin{enumerate}[Type A:]
\item Let $\It := \{ (i,k) \in I \times \Z : i-k\equiv 1 \mod 2 \}$.
\item Let $\It := \{(N,2k+1):k\in \Z\} \sqcup \{(i,k) \in I\times \Z : i<N \text{ and } k\equiv 0 \mod 2\}$.
\end{enumerate}
For the remainder of this paper, we pick and fix once and for all an $a\in \Cx$, and work solely with representations whose $q$-characters lie in the subring $\Z[Y_{i,aq^k}^{\pm 1}]_{(i,k)\in \It}$.
It is helpful to define also \be \Iw := \{(i,k) : (i,k-r_i)\in \It \}\ee
for we have, as a refinement of (\ref{imchiq}), that for all $m_+\in \Z[Y_{i,aq^k}]_{(i,k)\in \It}$
\be\nn \mchiq(\L(m_+))\subset m_+ \Z[ A_{i,aq^k}^{-1}]_{(i,k)\in \Iw}.\ee

\begin{rem}\label{nolossremark} 
For all $m\in \P^+$, $\L(m)$ is isomorphic to a tensor product of simple $\uqgh$-modules
such that for each simple factor $\L(m')$ there exists an $a\in \Cx$ such that $\chi_q(\L(m')) \in  \Z[Y^{\pm 1}_{i,aq^k}]_{(i,k)\in \It}$. So there is no loss of generality in restricting our attention to $\Z[Y_{i,aq^k}^{\pm 1}]_{(i,k)\in \It}$.
\end{rem}

From now on it is convenient to write, by an abuse of notation,
\be\nn Y_{i,k} := Y_{i,aq^k}, \quad A_{i,k} := A_{i,aq^k}\label{Yikdef},\quad u_{i,k} := u_{i,aq^k}\ee 
for all $(i,k)\in I \times \Z$ (\confer (\ref{udef}) for the definition of $u_{i,aq^k}$).  

\subsection{Snake position and minimal snake position}\label{snakepos} Let $(i,k)\in \It$. Let us say that a point $(i',k') \in \It$ is in \emph{in snake position} with respect to $(i,k)$ if and only if
\begin{enumerate}[Type A:]
\item $k'-k\geq |i'-i|+2$. 
\item  $ $
  \begin{align*}
   &i=i'=N :  &k'-k  &\geq 2 & \text{ and }\quad k'-k&\equiv 2 \mod 4\\
  &i<i'=N \text{ or } i'< i=N :  &k'-k&\geq 2|i'-i| +3 & \text{ and }\quad k'-k &\equiv 2|i'-i| - 1 \mod 4\\
  &i<N \text{ and } i'<N:  &k'-k&\geq 2|i'-i|+4 & \text{ and }\quad k'-k &\equiv 2|i'-i| \mod 4. \end{align*}
\end{enumerate} 
The point $(i',k')$ is in \emph{minimal} snake position to $(i,k)$ if and only if $k'-k$ is equal to the given lower bound. 
The meaning of snake position is illustrated in Figure \ref{snakeposfig}. Here, and subsequently, we draw the images of points in $\It$ under the  injective map $\iota:\It \to \Z \times \Z$ defined as follows.
\begin{align} \text{Type B:}&\qquad \iota:  (i,k)  \mapsto 
\begin{cases} (2i,k) & i<N\text{ and } 2N+k-2i \equiv 2\mod 4\\
             (4N-2-2i,k) & i<N\text{ and } 2N+k-2i \equiv 0\mod 4\\
                                 (2N-1,k)   & i = N .\end{cases}\label{iotadef}\\
  \text{Type A:}& \qquad \iota: (i,k)\mapsto (i,k).\nn\end{align} 
(We define $\iota$ in type A purely in order to make certain proofs more uniform in what follows.)
\subsection{Snakes and snake representations} We call a finite sequence $(i_t,k_t)$, $1\leq t \leq T$, $T\in \Z_{\geq 1}$, of points in $\It$ a \emph{(minimal) snake} if and only if for all $2\leq t \leq T$, $(i_t,p_t)$ is in (minimal) snake position with respect to $(i_{t-1},k_{t-1})$. 
We call the simple module $\L(m)$ a \emph{(minimal) snake module} if and only if $m=\prod_{t=1}^T \YY {i_t}{k_t}$ for some (minimal) snake $(i_t,k_t)_{1\leq t\leq T}$.

The notion of snake position includes the correct position for minimal affinizations, but is more general. For example, one sees that in type $A_4$, $(2,3)$ and $(3,4)$ are both in snake position to $(2,1)$, but so too is $(3,6)$.  
$\L(\YY21\YY23)$ and $\L(\YY21\YY34)$ are minimal affinizations of $V(2\omega_2)$ and $V(\omega_2+\omega_3)$ respectively, but $\L(\YY21\YY36)$ is not a minimal affinization. Also, $\L(\YY21\YY34\YY27)$ is a minimal snake module but not a minimal affinization. In fact, by comparing it with the classification of minimal affinizations in types A and B \cite{Cminaffrank2,CPminaffBCFG}, one can verify

\begin{prop}\label{minaffprop}
Let $(i_t,k_t)\in \It$, $1\leq t \leq T$, $T\in \Z_{\geq 1}$. The following are equivalent:
\begin{enumerate}
\item $\L(\prod_{t=1}^T Y_{i_t,k_t})$ is a minimal affinization
\item $(i_t,k_t)_{1\leq t \leq T}$ is a minimal snake and the sequence $(i_t)_{1\leq t\leq T}$ is monotonic. \qed 
\end{enumerate} 
\end{prop}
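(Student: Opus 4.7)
The plan is to invoke the Chari--Pressley classification of minimal affinizations in types A and B \cite{Cminaffrank2,CPminaffBCFG}, which asserts exactly that $\L(\prod_{t=1}^T Y_{i_t,a_t})$ is a minimal affinization if and only if the node sequence $(i_t)_{1\leq t\leq T}$ is monotonic with respect to the standard ordering of $I$ and, for each $1 \leq t \leq T-1$, one has $a_{t+1}/a_t = q^{s_{i_t,i_{t+1}}}$, where the integers $s_{ij}$ are those recalled in the introduction. With this classification in hand, the proposition reduces to the purely combinatorial task of checking that, under our convention $Y_{i,k} = Y_{i,aq^k}$, the equation $k_{t+1} - k_t = s_{i_t,i_{t+1}}$ is equivalent to the condition that $(i_{t+1},k_{t+1})$ lies in minimal snake position to $(i_t,k_t)$ in the sense of \S\ref{snakepos}, whenever the $(i_t)$ are monotonic.

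For type A, the Chari--Pressley shift is $s_{ij}=|i-j|+2$, which by direct inspection matches the minimal snake gap $k'-k = |i'-i|+2$; this case is immediate.

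For type B, I would split into three subcases according to whether neither, exactly one, or both of the nodes $i,i'$ coincide with the short-root node $N$, and compare the Chari--Pressley shifts with the three clauses in the definition of minimal snake position in \S\ref{snakepos}. In each subcase the equality of the minimal gap reduces to a direct substitution, and the congruence modulo $4$ recorded in \S\ref{snakepos} is automatically satisfied when $k'-k$ equals the specified lower bound (e.g., $2|i'-i|+4 \equiv 2|i'-i| \pmod 4$, and similarly for the other two cases). A supplementary consistency check one should make is that consecutive points $(i_t,k_t)$ and $(i_{t+1},k_{t+1})$ do both lie in $\It \subset I \times \Z$, which in type B is guaranteed precisely by these congruences.

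I expect the main obstacle to be merely notational: one must carefully translate the conventions of \cite{CPminaffBCFG} -- their normalization of $q$, labeling of the Dynkin diagram, and placement of the minimal affinization data -- into those of the present paper. There is no genuine representation-theoretic content beyond the input from Chari--Pressley; once the dictionary is established, the equivalence of (1) and (2) follows by matching gap values and noting that monotonicity appears identically on both sides.
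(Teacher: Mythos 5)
Your proposal is correct and follows essentially the same route as the paper, which offers no detailed argument beyond the remark that the proposition ``can be verified by comparing with the classification of minimal affinizations in types A and B'' of Chari--Pressley; your case-by-case matching of the shifts $s_{i_t,i_{t+1}}$ with the minimal snake gaps (and the observation that the mod-$4$ congruences hold automatically at the lower bound) is exactly the intended verification, spelled out in more detail than the paper gives.
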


Thus we have the following nested classes of representations, in order of increasing generality:
\be 
\textbf{KR modules} \subset \textbf{minimal affinizations} \subset \textbf{minimal snake modules} \subset \textbf{snake modules.} \nn\ee

\begin{rem*}There is no upper bound on the gap $k'-k$ in the definition of snake position. Suppose  $(i_t,k_t)\in\It$, $1\leq t\leq T$ is a snake of length $T\in \Z_{\geq 1}$. If $k_{s+1}-k_s$ is sufficiently large, for some $1\leq s<T$, then $\L(\prod_{t=1}^T Y_{i_t,k_t}) \cong \L(\prod_{t=1}^s Y_{i_t,k_t}) \otimes \L(\prod_{t=s+1}^T Y_{i_t,k_t})$.  Thus snake modules need not be \emph{prime}.
We shall see that minimal snakes are prime.
\end{rem*}

\section{Paths and moves}\label{sec:paths}

\subsection{Paths and corners} 
For each $(i,k)\in \It$ we shall define a set  $\scr P_{i,k}$ of paths. For us, a \emph{path} is a  finite sequence of points in the plane $\R^2$. 
We write $(j,\ell)\in p$ if $(j,\ell)$ is a point of the path $p$. In our diagrams, we connect consecutive points of the path by line segments, for illustrative purposes only.

\subsubsection*{Paths of Type A} For all $(i,k)\in \It$, let
\bea \scr P_{i,k} := \Big\{ \big( (0,y_0), (1,y_1), \dots, (N+1,y_{N+1}) \big) &:& y_0 = i+k,\,\, y_{N+1} = N+1-i+k,\nn\\ &\text{and}& y_{i+1}-y_i \in \{ 1, -1 \} \,\,\forall 0\leq i \leq N \Big\} . \nn\eea
We define the sets $C_{p,\pm}$ of upper and lower \emph{corners} of a path $p=\big( (r,y_r) \big)_{0\leq r\leq N+1} \in \scr P_{i,k}$ to be (\confer Figure \ref{Afundfig})
\bea C_{p,+} &:=& \left\{ (r,y_r) \in p:  r\in I,\,y_{r-1} = y_r+1  = y_{r+1}\right\} \nn\\
   C_{p,-} &:=& \left\{ (r,y_r) \in p:  r\in I,\,y_{r-1} = y_r-1  = y_{r+1}\right\}  \nn.\eea

\subsubsection*{Paths of Type B}  
Pick and fix an $\eps$, $1/2>\eps>0$. We first define $\scr P_{N,\ell}$ for all $\ell\in 2\Z+1$ as follows. 
\begin{itemize}\item
For all $\ell\equiv 3\!\!\mod 4$,
\bea \scr P_{N,\ell} &:=& \Big\{\big( (0,y_0), (2,y_1), \dots, (2N -4,y_{N-2}), (2N-2,y_{N-1}), (2N-1,y_{N})\big) \nn\\ &&\quad: \quad y_0 = \ell+2N-1,  y_{i+1}-y_i \in\{2,-2\} \,\,\forall 0\leq i\leq N-2  \nn \\ &&\qquad\qquad\qquad\quad\text{and}\quad y_{N}-y_{N-1} \in \{1+\eps,-1-\eps \} \Big\}.\nn\eea
\item
For all $\ell\equiv 1\!\!\mod 4$,
\bea \scr P_{N,\ell} &:=& \Big\{\big( (4N-2,y_0), (4N-4,y_1), \dots, (2N +2,y_{N-2}), (2N,y_{N-1}), (2N-1,y_{N})\big) \nn\\ &&\quad: \quad y_0 = \ell+2N-1,  y_{i+1}-y_i \in\{2,-2\} \,\,\forall 0\leq i\leq N-2  \nn \\ &&\qquad\qquad\qquad\quad\text{and}\quad y_{N}-y_{N-1} \in \{1+\eps,-1-\eps \} \Big\}.\nn \eea
\end{itemize}
Next we define $\scr P_{i,k}$ for all $(i,k)\in \It$, $i<N$, as follows.
\bea \scr P_{i,k} := \Big\{ (a_0,a_1, \dots, a_{N}, \bar a_{N} , \dots ,\bar a_1,\bar a_0) &:& (a_0,a_1,\dots,a_N) \in \scr P_{N,k-(2N-2i-1)},\nn\\&& (\bar a_0,\bar a_1,\dots,\bar a_N) \in \scr P_{N,k+(2N-2i-1)},\label{bvectpathdef1}\\
&\text{and}& a_N-\bar a_N = (0,y) \quad\text{where}\quad y>0 \Big\} .\nn\eea
These definitions are illustrated in Figures \ref{Bimage}, \ref{Bspinfig} and \ref{Bfundfig}.

For all $(i,k)\in \It$, we define the sets of upper and lower \emph{corners} $C_{p,\pm}$ of a path $p= \big( (j_r,\ell_r) \big)_{0\leq t\leq |p|-1} \in \scr P_{i,k}$ as follows:
\bea C_{p,+} &:= & \iota^{-1}\Big\{ (j_r,\ell_r) \in p:  j_r\notin \{0,2N-1,4N-2\}, \,\ell_{r-1} > \ell_r,\,\,  \ell_{r+1}>\ell_r \Big\}\nn \\ 
           && {}\sqcup \{(N,\ell)\in \It: (2N-1,\ell-\eps)\in p 
                        \text{ and } (2N-1,\ell+\eps) \notin p \}\nn ,\eea
\bea C_{p,-} &:= & \iota^{-1}\Big\{ (j_r,\ell_r) \in p:   j_r\notin \{0,2N-1,4N-2\},\,\ell_{r-1} < \ell_r,\,\, \ell_{r+1}< \ell_r\Big\}\nn \\ 
           && {}\sqcup \{(N,\ell)\in \It:  (2N-1,\ell+\eps)\in p 
                          \text{ and } (2N-1,\ell-\eps) \notin p \}\nn .\eea
where $\iota$ is the map defined in (\ref{iotadef}). Note that $C_{p,\pm}$ is a subset of $\It$.

We define a map $\mon$ sending paths to monomials, as follows:
\be \mon : \bigsqcup_{(i,k)\in \It} \scr P_{i,k} \longrightarrow \Z\left[Y_{j,\ell}^{\pm 1}\right]_{(j,\ell)\in \It}\,\,;\quad
 p \mapsto \mon(p) := \prod_{(j,\ell)\in C_{p,+} } \YY j \ell \prod_{(j,\ell)\in C_{p,-} } \MM j \ell.\label{mondef}\ee

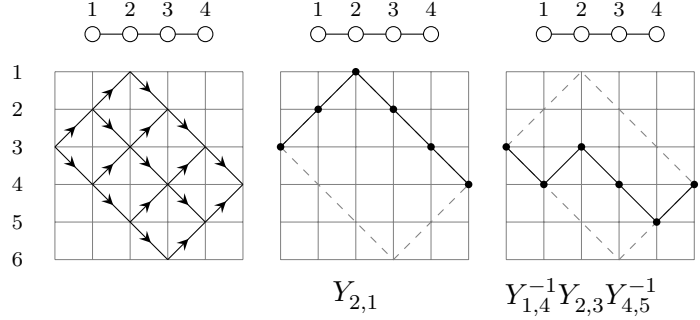
\begin{figure} 
\caption{In type $A_4$: left, illustration of the paths in $\scr P_{2,1}$; centre and right, the paths corresponding to two monomials of $\chi_q(\L(\YY 2 1))$.\label{Afundfig}}  
\be \begin{tikzpicture}[scale=.5,yscale=-1,decoration={
markings,
mark=at position .55 with {\arrow[black,line width=.3mm]{stealth}}}]
\draw[help lines] (0,0) grid (5,5);
\draw (1,-1) -- (4,-1);
\foreach \x in {1,2,3,4} {\filldraw[fill=white] (\x,-1) circle (2mm) node[above=1mm] {$\scriptstyle\x$}; }
\foreach \y in {1,2,3,4,5,6} {\node at (-7,\y-1) {$\scriptstyle\y$};}
\draw[gray,dashed] (0,2) -- (2,0) -- (5,3)--(3,5)--cycle;
\begin{scope}[every node/.style={minimum size=.1cm,inner sep=0mm,fill,circle}]
\draw (0,2) node{} -- ++(1,-1) node{} -- ++(1,-1) node{}-- ++(1,1) node{} -- ++(1,1) node{} -- ++(1,1) node{};
\end{scope}
\node[fill=gray!0] at (2,6) { $\YY21$};
\begin{scope}[xshift =- 6cm]
\draw[help lines] (0,0) grid (5,5);
\draw (1,-1) -- (4,-1);
\foreach \x in {1,2,3,4} {\filldraw[fill=white] (\x,-1) circle (2mm) node[above=1mm] {$\scriptstyle\x$}; }
\foreach \x/\y in {0/2,1/1,1/3,2/2,2/4,3/3} {\draw[postaction={decorate}] (\x,\y) -- ++(1,1); \draw[postaction={decorate}] (\x,\y) --++(1,-1);}
\foreach \x/\y in {2/0,3/1,4/2} {\draw[postaction={decorate}] (\x,\y) -- ++(1,1);}
\foreach \x/\y in {3/5,4/4} {\draw[postaction={decorate}] (\x,\y) -- ++(1,-1);}
\end{scope}
\begin{scope}[xshift = 6cm]
\draw[help lines] (0,0) grid (5,5);
\draw (1,-1) -- (4,-1);
\foreach \x in {1,2,3,4} {\filldraw[fill=white] (\x,-1) circle (2mm) node[above=1mm] {$\scriptstyle\x$}; }
\draw[gray,dashed] (0,2) -- (2,0) -- (5,3)--(3,5)--cycle;
\begin{scope}[every node/.style={minimum size=.1cm,inner sep=0mm,fill,circle}]
\draw (0,2) node{} -- ++(1,1) node{} -- ++(1,-1) node{}-- ++(1,1) node{} -- ++(1,1) node{} -- ++(1,-1) node{};
\end{scope}
\node[fill=gray!0] at (2,6) {$ \MM 1 4  \YY 2 3\MM 4 5$};
\end{scope}\end{tikzpicture}\nn\ee\end{figure}
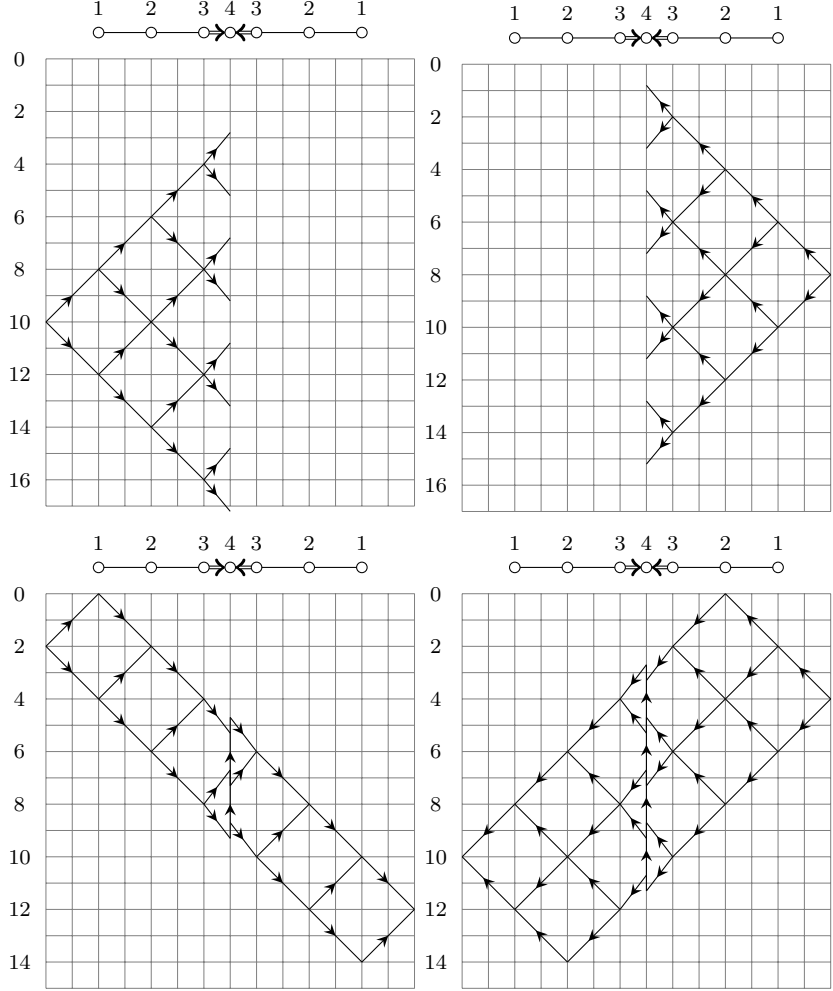
\begin{figure}
\caption{\label{Bimage} In type $B_4$, illustration of the paths in: $\scr P_{4,3}$ (top left), $\scr P_{4,1}$ (top right),  $\scr P_{1,0}$ (bottom left), and $\scr P_{2,0}$ (bottom right).}
\be
\begin{tikzpicture}[scale=.35,yscale=-1,decoration={
markings,
mark=at position .5 with {\arrow[black,line width=.3mm]{stealth}}}
]
\draw[help lines] (0,0) grid (14,17);
\draw (2,-1) -- (6,-1); \draw (8,-1) -- (12,-1);
\draw[double,->] (6,-1) -- (6.8,-1); \draw[double,->] (8,-1) -- (7.2,-1);
\filldraw[fill=white] (7,-1) circle (2mm) node[above=1mm] {$\scriptstyle 4$};
\foreach \x in {1,2,3} {
\filldraw[fill=white] (2*\x,-1) circle (2mm) node[above=1mm] {$\scriptstyle\x$}; 
\filldraw[fill=white] (2*7-2*\x,-1) circle (2mm) node[above=1mm] {$\scriptstyle\x$}; }
\foreach \y in {0,2,4,6,8,10,12,14,16} {\node at (-1,\y) {$\scriptstyle\y$};}
\begin{scope}[yshift=2cm]
\foreach \x/\y in {0/8,2/6,2/10,4/4,4/8,4/12} {\draw[postaction={decorate}] (\x,\y) -- ++(2,2); \draw[postaction={decorate}] (\x,\y) --++(2,-2);}
\foreach \y in {2,6,10,14}  {\draw[postaction={decorate}] (6,\y) -- ++(1,1.2); \draw[postaction={decorate}] (6,\y) --++(1,-1.2);}
\end{scope}

\end{tikzpicture}
\begin{tikzpicture}[scale=.35,yscale=-1,decoration={
markings,
mark=at position .5 with {\arrow[black,line width=.3mm]{stealth}}}]
\draw[help lines] (0,0) grid (14,17);
\draw (2,-1) -- (6,-1); \draw (8,-1) -- (12,-1);
\draw[double,->] (6,-1) -- (6.8,-1); \draw[double,->] (8,-1) -- (7.2,-1);
\filldraw[fill=white] (7,-1) circle (2mm) node[above=1mm] {$\scriptstyle 4$};
\foreach \x in {1,2,3} {
\filldraw[fill=white] (2*\x,-1) circle (2mm) node[above=1mm] {$\scriptstyle\x$}; 
\filldraw[fill=white] (2*7-2*\x,-1) circle (2mm) node[above=1mm] {$\scriptstyle\x$}; }
\foreach \y in {0,2,4,6,8,10,12,14,16} {\node at (-1,\y) {$\scriptstyle\y$};}
\begin{scope}[xshift=14cm,xscale=-1]
\foreach \x/\y in {0/8,2/6,2/10,4/4,4/8,4/12} {\draw[postaction={decorate}] (\x,\y) -- ++(2,2); \draw (\x,\y)[postaction={decorate}] --++(2,-2);}
\foreach \y in {2,6,10,14}  {\draw[postaction={decorate}] (6,\y) -- ++(1,1.2); \draw[postaction={decorate}] (6,\y) --++(1,-1.2);}
\end{scope}
\end{tikzpicture}
\nn\ee
\be \begin{tikzpicture}[scale=.35,yscale=-1,decoration={
markings,
mark=at position .5 with {\arrow[black,line width=.3mm]{stealth}}}]
\draw[help lines] (0,0) grid (14,15);
\draw (2,-1) -- (6,-1); \draw (8,-1) -- (12,-1);
\draw[double,->] (6,-1) -- (6.8,-1); \draw[double,->] (8,-1) -- (7.2,-1);
\filldraw[fill=white] (7,-1) circle (2mm) node[above=1mm] {$\scriptstyle 4$};
\foreach \x in {1,2,3} {
\filldraw[fill=white] (2*\x,-1) circle (2mm) node[above=1mm] {$\scriptstyle\x$}; 
\filldraw[fill=white] (2*7-2*\x,-1) circle (2mm) node[above=1mm] {$\scriptstyle\x$}; }
\foreach \y in {0,2,4,6,8,10,12,14} {\node at (-1,\y) {$\scriptstyle\y$};}
\foreach \x/\y in {0/2,2/4,4/6,8/10,10/12} {\draw[postaction={decorate}] (\x,\y) -- ++(2,2); \draw (\x,\y)[postaction={decorate}] --++(2,-2);}
\foreach \x/\y in {2/0,4/2,8/6,10/8,12/10} {\draw[postaction={decorate}] (\x,\y) -- ++(2,2);}
\foreach \x/\y in {12/14} {\draw[postaction={decorate}] (\x,\y) -- ++(2,-2);}
\foreach \x/\y in {6/4,6/8} {\draw[postaction={decorate}] (\x,\y) -- ++(1,1.3);
\draw[postaction={decorate}] (\x,\y)++(1,0.7) -- ++(1,1.3); }
\foreach \x/\y in {6/8} {\draw[postaction={decorate}] (\x,\y) -- ++(1,-1.3);
\draw[postaction={decorate}] (\x,\y)++(1,-0.7) -- ++(1,-1.3); }
\foreach \x/\y in {7/9,7/7} {\draw[postaction={decorate}] (\x,\y)++(0,.3) -- ++(0,-2.6); }

\end{tikzpicture}
\begin{tikzpicture}[scale=.35,yscale=-1,decoration={
markings,
mark=at position .6 with {\arrow[black,line width=.3mm]{stealth}}}]
\draw[help lines] (0,0) grid (14,15);
\draw (2,-1) -- (6,-1); \draw (8,-1) -- (12,-1);
\draw[double,->] (6,-1) -- (6.8,-1); \draw[double,->] (8,-1) -- (7.2,-1);
\filldraw[fill=white] (7,-1) circle (2mm) node[above=1mm] {$\scriptstyle 4$};
\foreach \x in {1,2,3} {
\filldraw[fill=white] (2*\x,-1) circle (2mm) node[above=1mm] {$\scriptstyle\x$}; 
\filldraw[fill=white] (2*7-2*\x,-1) circle (2mm) node[above=1mm] {$\scriptstyle\x$}; }
\foreach \y in {0,2,4,6,8,10,12,14} {\node at (-1,\y) {$\scriptstyle\y$};}
\begin{scope}[xshift=14cm,xscale=-1]
\foreach \x/\y in {0/4,2/6,4/8,8/12,2/2,4/4,8/8,10/10} {\draw[postaction={decorate}] (\x,\y) -- ++(2,2); \draw (\x,\y)[postaction={decorate}] --++(2,-2);}
\foreach \x/\y in {4/0,8/4,10/6,12/8} {\draw[postaction={decorate}] (\x,\y) -- ++(2,2);}
\foreach \x/\y in {10/14,12/12} {\draw[postaction={decorate}] (\x,\y) -- ++(2,-2);}
\foreach \x/\y in {6/2,6/6,6/10} {\draw[postaction={decorate}] (\x,\y) -- ++(1,1.3);
\draw[postaction={decorate}] (\x,\y)++(1,0.7) -- ++(1,1.3); }
\foreach \x/\y in {6/6,6/10} {\draw[postaction={decorate}] (\x,\y) -- ++(1,-1.3);
\draw[postaction={decorate}] (\x,\y)++(1,-0.7) -- ++(1,-1.3); }
\foreach \x/\y in {7/11,7/9,7/7,7/5} {\draw[postaction={decorate}] (\x,\y)++(0,.3) -- ++(0,-2.6); }
\end{scope}
\end{tikzpicture}\nn
\ee

\end{figure}

\begin{figure}
\caption{\label{Bspinfig} In type $B_4$, the paths corresponding to three monomials of $\chi_q(\L(Y_{4,1}))$}
\be \begin{tikzpicture}[scale=.35,yscale=-1]
\draw[help lines] (0,0) grid (14,15);
\draw (2,-1) -- (6,-1); \draw (8,-1) -- (12,-1);
\draw[double,->] (6,-1) -- (6.8,-1); \draw[double,->] (8,-1) -- (7.2,-1);
\filldraw[fill=white] (7,-1) circle (2mm) node[above=1mm] {$\scriptstyle 4$};
\foreach \x in {1,2,3} {
\filldraw[fill=white] (2*\x,-1) circle (2mm) node[above=1mm] {$\scriptstyle\x$}; 
\filldraw[fill=white] (2*7-2*\x,-1) circle (2mm) node[above=1mm] {$\scriptstyle\x$}; }
\foreach \y in {0,2,4,6,8,10,12,14} {\node at (-1,\y) {$\scriptstyle\y$};}
\begin{scope}[yshift=1cm]
\draw[gray,dashed] (7,-.3) -- ++(1,1.3) -- (14,7)--(7,14.3);
\begin{scope}[every node/.style={minimum size=.1cm,inner sep=0mm,fill,circle}]
\draw  (7,-.3) node{} -- ++(1,1.3) node{}
 -- ++(2,2) node{} -- ++(2,2) node{}--  ++(2,2) node{};
\end{scope}

\end{scope}
\node[fill=gray!0] at (4,13) {$Y_{4,1}$};
\end{tikzpicture} 
\begin{tikzpicture}[scale=.35,yscale=-1]
\draw[help lines] (0,0) grid (14,15);
\draw (2,-1) -- (6,-1); \draw (8,-1) -- (12,-1);
\draw[double,->] (6,-1) -- (6.8,-1); \draw[double,->] (8,-1) -- (7.2,-1);
\filldraw[fill=white] (7,-1) circle (2mm) node[above=1mm] {$\scriptstyle 4$};
\foreach \x in {1,2,3} {
\filldraw[fill=white] (2*\x,-1) circle (2mm) node[above=1mm] {$\scriptstyle\x$}; 
\filldraw[fill=white] (2*7-2*\x,-1) circle (2mm) node[above=1mm] {$\scriptstyle\x$}; }
\foreach \y in {0,2,4,6,8,10,12,14} {\node at (-1,\y) {$\scriptstyle\y$};}
\begin{scope}[yshift=1cm]
\draw[gray,dashed] (7,-.3) -- ++(1,1.3) -- (14,7)--(7,14.3);
\begin{scope}[every node/.style={minimum size=.1cm,inner sep=0mm,fill,circle}]
\draw  (7,2.3) node{} -- ++(1,-1.3) node{}
 -- ++(2,2) node{} -- ++(2,2) node{}--  ++(2,2) node{};
\end{scope}
\end{scope}
\node[fill=gray!0] at (4,13) {$Y_{4,3}^{-1}Y_{3,2}$};
\end{tikzpicture}
\begin{tikzpicture}[scale=.35,yscale=-1]
\draw[help lines] (0,0) grid (14,15);
\draw (2,-1) -- (6,-1); \draw (8,-1) -- (12,-1);
\draw[double,->] (6,-1) -- (6.8,-1); \draw[double,->] (8,-1) -- (7.2,-1);
\filldraw[fill=white] (7,-1) circle (2mm) node[above=1mm] {$\scriptstyle 4$};
\foreach \x in {1,2,3} {
\filldraw[fill=white] (2*\x,-1) circle (2mm) node[above=1mm] {$\scriptstyle\x$}; 
\filldraw[fill=white] (2*7-2*\x,-1) circle (2mm) node[above=1mm] {$\scriptstyle\x$}; }
\foreach \y in {0,2,4,6,8,10,12,14} {\node at (-1,\y) {$\scriptstyle\y$};}
\begin{scope}[yshift=1cm]
\draw[gray,dashed] (7,-.3) -- ++(1,1.3) -- (14,7)--(7,14.3);
\begin{scope}[every node/.style={minimum size=.1cm,inner sep=0mm,fill,circle}]
\draw  (7,6.3) node{} -- ++(1,-1.3) node{}
 -- ++(2,2) node{} -- ++(2,-2) node{}--  ++(2,2) node{};
\end{scope}
\end{scope}
\node[fill=gray!0] at (4,13) {$Y_{4,7}^{-1}Y_{3,6} Y_{2,8}^{-1} Y_{1,6}$};
\end{tikzpicture}\nn\ee
\end{figure}

\begin{figure}
\caption{\label{Bfundfig}In type $B_4$, the paths corresponding to the monomials of $\chi_q(\L(Y_{1,0}))$.}
\be \begin{tikzpicture}[scale=.35,yscale=-1]
\draw[help lines] (0,0) grid (14,15);
\draw (2,-1) -- (6,-1); \draw (8,-1) -- (12,-1);
\draw[double,->] (6,-1) -- (6.8,-1); \draw[double,->] (8,-1) -- (7.2,-1);
\filldraw[fill=white] (7,-1) circle (2mm) node[above=1mm] {$\scriptstyle 4$};
\foreach \x in {1,2,3} {
\filldraw[fill=white] (2*\x,-1) circle (2mm) node[above=1mm] {$\scriptstyle\x$}; 
\filldraw[fill=white] (2*7-2*\x,-1) circle (2mm) node[above=1mm] {$\scriptstyle\x$}; }
\foreach \y in {0,2,4,6,8,10,12,14} {\node at (-1,\y) {$\scriptstyle\y$};}
\node[fill=gray!0] at (4,13) { $\YY10$};
\draw[gray,dashed] (0,2) -- (2,0) -- (14,12)--(12,14)--cycle;
\begin{scope}[every node/.style={minimum size=.1cm,inner sep=0mm,fill,circle}]
\draw (0,2) node{}  -- ++(2,-2) node{}   -- ++(2,2) node{}  -- ++(2,2) node{}  -- ++(1,1.3) node{}  -- ++(0,-.6) node{}  -- ++(1,1.3) node{}  -- ++(2,2) node{}  -- ++(2,2) node{}  -- ++(2,2) node{} ;
\end{scope}
\end{tikzpicture}
\begin{tikzpicture}[scale=.35,yscale=-1]
\draw[help lines] (0,0) grid (14,15);
\draw (2,-1) -- (6,-1); \draw (8,-1) -- (12,-1);
\draw[double,->] (6,-1) -- (6.8,-1); \draw[double,->] (8,-1) -- (7.2,-1);
\filldraw[fill=white] (7,-1) circle (2mm) node[above=1mm] {$\scriptstyle 4$};
\foreach \x in {1,2,3} {
\filldraw[fill=white] (2*\x,-1) circle (2mm) node[above=1mm] {$\scriptstyle\x$}; 
\filldraw[fill=white] (2*7-2*\x,-1) circle (2mm) node[above=1mm] {$\scriptstyle\x$}; }
\foreach \y in {0,2,4,6,8,10,12,14} {\node at (-1,\y) {$\scriptstyle\y$};}
\draw[gray,dashed] (0,2) -- (2,0) -- (14,12)--(12,14)--cycle;
\begin{scope}[every node/.style={minimum size=.1cm,inner sep=0mm,fill,circle}]
\draw (0,2) node{}  -- ++(2,2) node{}   -- ++(2,-2) node{}  -- ++(2,2) node{}  -- ++(1,1.3) node{}  -- ++(0,-.6) node{}  -- ++(1,1.3) node{}  -- ++(2,2) node{}  -- ++(2,2) node{}  -- ++(2,2) node{} ;
\end{scope}
\node[fill=gray!0] at (4,13) {$Y_{2,2}Y_{1,4}^{-1}$};
\end{tikzpicture}
\begin{tikzpicture}[scale=.35,yscale=-1]
\draw[help lines] (0,0) grid (14,15);
\draw (2,-1) -- (6,-1); \draw (8,-1) -- (12,-1);
\draw[double,->] (6,-1) -- (6.8,-1); \draw[double,->] (8,-1) -- (7.2,-1);
\filldraw[fill=white] (7,-1) circle (2mm) node[above=1mm] {$\scriptstyle 4$};
\foreach \x in {1,2,3} {
\filldraw[fill=white] (2*\x,-1) circle (2mm) node[above=1mm] {$\scriptstyle\x$}; 
\filldraw[fill=white] (2*7-2*\x,-1) circle (2mm) node[above=1mm] {$\scriptstyle\x$}; }
\foreach \y in {0,2,4,6,8,10,12,14} {\node at (-1,\y) {$\scriptstyle\y$};}
\draw[gray,dashed] (0,2) -- (2,0) -- (14,12)--(12,14)--cycle;
\begin{scope}[every node/.style={minimum size=.1cm,inner sep=0mm,fill,circle}]
\draw (0,2) node{}  -- ++(2,2) node{}   -- ++(2,2) node{}  -- ++(2,-2) node{}  -- ++(1,1.3) node{}  -- ++(0,-.6) node{}  -- ++(1,1.3) node{}  -- ++(2,2) node{}  -- ++(2,2) node{}  -- ++(2,2) node{} ;
\end{scope}
\node[fill=gray!0] at (4,13) {$Y_{3,4}Y_{2,6}^{-1}$};
\end{tikzpicture}\nn\ee
$ $
\be \begin{tikzpicture}[scale=.35,yscale=-1]
\draw[help lines] (0,0) grid (14,15);
\draw (2,-1) -- (6,-1); \draw (8,-1) -- (12,-1);
\draw[double,->] (6,-1) -- (6.8,-1); \draw[double,->] (8,-1) -- (7.2,-1);
\filldraw[fill=white] (7,-1) circle (2mm) node[above=1mm] {$\scriptstyle 4$};
\foreach \x in {1,2,3} {
\filldraw[fill=white] (2*\x,-1) circle (2mm) node[above=1mm] {$\scriptstyle\x$}; 
\filldraw[fill=white] (2*7-2*\x,-1) circle (2mm) node[above=1mm] {$\scriptstyle\x$}; }
\foreach \y in {0,2,4,6,8,10,12,14} {\node at (-1,\y) {$\scriptstyle\y$};}
\draw[gray,dashed] (0,2) -- (2,0) -- (14,12)--(12,14)--cycle;
\begin{scope}[every node/.style={minimum size=.1cm,inner sep=0mm,fill,circle}]
\draw (0,2) node{}  -- ++(2,2) node{}   -- ++(2,2) node{}  -- ++(2,2) node{}  -- ++(1,-1.3) node{}  -- ++(0,-2) node{}  -- ++(1,1.3) node{}  -- ++(2,2) node{}  -- ++(2,2) node{}  -- ++(2,2) node{} ;
\end{scope}
\node[fill=gray!0] at (4,13) {$Y_{4,5}Y_{4,7}Y_{3,8}^{-1}$};
\end{tikzpicture}
\begin{tikzpicture}[scale=.35,yscale=-1]
\draw[help lines] (0,0) grid (14,15);
\draw (2,-1) -- (6,-1); \draw (8,-1) -- (12,-1);
\draw[double,->] (6,-1) -- (6.8,-1); \draw[double,->] (8,-1) -- (7.2,-1);
\filldraw[fill=white] (7,-1) circle (2mm) node[above=1mm] {$\scriptstyle 4$};
\foreach \x in {1,2,3} {
\filldraw[fill=white] (2*\x,-1) circle (2mm) node[above=1mm] {$\scriptstyle\x$}; 
\filldraw[fill=white] (2*7-2*\x,-1) circle (2mm) node[above=1mm] {$\scriptstyle\x$}; }
\foreach \y in {0,2,4,6,8,10,12,14} {\node at (-1,\y) {$\scriptstyle\y$};}
\draw[gray,dashed] (0,2) -- (2,0) -- (14,12)--(12,14)--cycle;
\begin{scope}[every node/.style={minimum size=.1cm,inner sep=0mm,fill,circle}]
\draw (0,2) node{}  -- ++(2,2) node{}   -- ++(2,2) node{}  -- ++(2,2) node{}  -- ++(1,1.3) node{}  -- ++(0,-4.6) node{}  -- ++(1,1.3) node{}  -- ++(2,2) node{}  -- ++(2,2) node{}  -- ++(2,2) node{} ;
\end{scope}
\node[fill=gray!0] at (4,13) {$Y_{4,5}Y_{4,9}^{-1}$};
\end{tikzpicture}
\begin{tikzpicture}[scale=.35,yscale=-1]
\draw[help lines] (0,0) grid (14,15);
\draw (2,-1) -- (6,-1); \draw (8,-1) -- (12,-1);
\draw[double,->] (6,-1) -- (6.8,-1); \draw[double,->] (8,-1) -- (7.2,-1);
\filldraw[fill=white] (7,-1) circle (2mm) node[above=1mm] {$\scriptstyle 4$};
\foreach \x in {1,2,3} {
\filldraw[fill=white] (2*\x,-1) circle (2mm) node[above=1mm] {$\scriptstyle\x$}; 
\filldraw[fill=white] (2*7-2*\x,-1) circle (2mm) node[above=1mm] {$\scriptstyle\x$}; }
\foreach \y in {0,2,4,6,8,10,12,14} {\node at (-1,\y) {$\scriptstyle\y$};}
\draw[gray,dashed] (0,2) -- (2,0) -- (14,12)--(12,14)--cycle;
\begin{scope}[every node/.style={minimum size=.1cm,inner sep=0mm,fill,circle}]
\draw (0,2) node{}  -- ++(2,2) node{}   -- ++(2,2) node{}  -- ++(2,2) node{}  -- ++(1,1.3) node{}  -- ++(0,-2) node{}  -- ++(1,-1.3) node{}  -- ++(2,2) node{}  -- ++(2,2) node{}  -- ++(2,2) node{} ;
\end{scope}
\node[fill=gray!0] at (4,13) {$Y_{4,7}^{-1}Y_{4,9}^{-1}Y_{3,6}$};
\end{tikzpicture}\nn\ee
$ $
\be \begin{tikzpicture}[scale=.35,yscale=-1]
\draw[help lines] (0,0) grid (14,15);
\draw (2,-1) -- (6,-1); \draw (8,-1) -- (12,-1);
\draw[double,->] (6,-1) -- (6.8,-1); \draw[double,->] (8,-1) -- (7.2,-1);
\filldraw[fill=white] (7,-1) circle (2mm) node[above=1mm] {$\scriptstyle 4$};
\foreach \x in {1,2,3} {
\filldraw[fill=white] (2*\x,-1) circle (2mm) node[above=1mm] {$\scriptstyle\x$}; 
\filldraw[fill=white] (2*7-2*\x,-1) circle (2mm) node[above=1mm] {$\scriptstyle\x$}; }
\foreach \y in {0,2,4,6,8,10,12,14} {\node at (-1,\y) {$\scriptstyle\y$};}
\node[fill=gray!0] at (4,13) { $Y_{3,10}^{-1}Y_{2,8}$};
\draw[gray,dashed] (0,2) -- (2,0) -- (14,12)--(12,14)--cycle;
\begin{scope}[every node/.style={minimum size=.1cm,inner sep=0mm,fill,circle}]
\draw (0,2) node{}  -- ++(2,2) node{}   -- ++(2,2) node{}  -- ++(2,2) node{}  -- ++(1,1.3) node{}  -- ++(0,-.6) node{}  -- ++(1,1.3) node{}  -- ++(2,-2) node{}  -- ++(2,2) node{}  -- ++(2,2) node{} ;
\end{scope}
\end{tikzpicture}
\begin{tikzpicture}[scale=.35,yscale=-1]
\draw[help lines] (0,0) grid (14,15);
\draw (2,-1) -- (6,-1); \draw (8,-1) -- (12,-1);
\draw[double,->] (6,-1) -- (6.8,-1); \draw[double,->] (8,-1) -- (7.2,-1);
\filldraw[fill=white] (7,-1) circle (2mm) node[above=1mm] {$\scriptstyle 4$};
\foreach \x in {1,2,3} {
\filldraw[fill=white] (2*\x,-1) circle (2mm) node[above=1mm] {$\scriptstyle\x$}; 
\filldraw[fill=white] (2*7-2*\x,-1) circle (2mm) node[above=1mm] {$\scriptstyle\x$}; }
\foreach \y in {0,2,4,6,8,10,12,14} {\node at (-1,\y) {$\scriptstyle\y$};}
\draw[gray,dashed] (0,2) -- (2,0) -- (14,12)--(12,14)--cycle;
\begin{scope}[every node/.style={minimum size=.1cm,inner sep=0mm,fill,circle}]
\draw (0,2) node{}  -- ++(2,2) node{}   -- ++(2,2) node{}  -- ++(2,2) node{}  -- ++(1,1.3) node{}  -- ++(0,-.6) node{}  -- ++(1,1.3) node{}  -- ++(2,2) node{}  -- ++(2,-2) node{}  -- ++(2,2) node{} ;
\end{scope}
\node[fill=gray!0] at (4,13) {$Y_{2,12}^{-1}Y_{1,10}$};
\end{tikzpicture}
\begin{tikzpicture}[scale=.35,yscale=-1]
\draw[help lines] (0,0) grid (14,15);
\draw (2,-1) -- (6,-1); \draw (8,-1) -- (12,-1);
\draw[double,->] (6,-1) -- (6.8,-1); \draw[double,->] (8,-1) -- (7.2,-1);
\filldraw[fill=white] (7,-1) circle (2mm) node[above=1mm] {$\scriptstyle 4$};
\foreach \x in {1,2,3} {
\filldraw[fill=white] (2*\x,-1) circle (2mm) node[above=1mm] {$\scriptstyle\x$}; 
\filldraw[fill=white] (2*7-2*\x,-1) circle (2mm) node[above=1mm] {$\scriptstyle\x$}; }
\foreach \y in {0,2,4,6,8,10,12,14} {\node at (-1,\y) {$\scriptstyle\y$};}
\draw[gray,dashed] (0,2) -- (2,0) -- (14,12)--(12,14)--cycle;
\begin{scope}[every node/.style={minimum size=.1cm,inner sep=0mm,fill,circle}]
\draw (0,2) node{}  -- ++(2,2) node{}   -- ++(2,2) node{}  -- ++(2,2) node{}  -- ++(1,1.3) node{}  -- ++(0,-.6) node{}  -- ++(1,1.3) node{}  -- ++(2,2) node{}  -- ++(2,2) node{}  -- ++(2,-2) node{} ;
\end{scope}
\node[fill=gray!0] at (4,13) {$Y_{1,14}^{-1}$};
\end{tikzpicture}\nn\ee
\end{figure}

\subsection{Lowering moves}\label{lowdef}
Let $(i,k)\in \It$ and $(j,\ell)\in \Iw$. We say a path $p\in \scr P_{i,k}$ can be \emph{lowered}  at $(j,\ell)$ if and only if $(j,\ell-r_j)\in C_{p,+}$ and $(j,\ell+r_j)\notin C_{p,+}$. If so, we define a \emph{lowering move} on $p$ at $(j,\ell)$, resulting in another path in $\scr P_{i,k}$ which we write as $p\scr A_{j,\ell}^{-1}$, as follows.

\subsubsection*{Moves of Type A} Let $p\in \scr P_{i,k}$. Then $p=\big( (i,y_i) \big)_{0\leq i\leq N+1}$ where $(y_i)_{0\leq i\leq N+1}\in \R^{N+2}$. The upper corners of $p$ are the points $(j,\ell-1)\in p$ such that $\ell= y_{j-1}=y_{j}+1 =y_{j+1}$. The second condition, that $(j,\ell+1)\notin C_{p,+}$, follows automatically and is thus redundant in type A.
For each such upper corner, we define 
$p\scr A_{j,\ell}^{-1}  := \big( (0,y_0),\dots, (j-1,y_{j-1}) , (j,y_j+2), (j+1,y_{j+1}), \dots, (N+1,y_{N+1}) \big)$, which is again a path in $\scr P_{i,k}$. 
Pictorially, this is the move
\be\nn\begin{tikzpicture}[baseline=0cm,scale=.5,yscale=-1]
\draw[help lines] (-1,-1) grid (1,1);
\draw (-1,-2) -- (1,-2);\draw[dashed] (-2,-2) -- (-1,-2);\draw[dashed] (1,-2) -- (2,-2);
\foreach \x in {-1,+1} {\filldraw[fill=white] (\x,-2) circle (2mm) node[above=1mm] {$\scriptscriptstyle j\x$}; }
\filldraw[fill=white] (0,-2) circle (2mm) node[above=1mm] {$\scriptstyle j$};
\foreach \y in {-1,+1} {\node at (-2,\y) {$\scriptstyle \ell\y$};}
\node at (-2,0) {$\scriptstyle \ell$};
\begin{scope}[every node/.style={minimum size=.1cm,inner sep=0mm,fill,circle}]
\draw[thick] (-1,0) node {} -- (0,1) node {} -- (1,0)node {} ;
\draw [thick,dotted] (1,0) -- (0,-1) -- (-1,0);
\end{scope}
\draw[double distance=.5mm,->,shorten <= 2mm,shorten >= 2mm] (0,-1) -- (0,1);
\end{tikzpicture}.\ee
\subsubsection*{Moves of Type B} 
We first define the lowering moves on paths in $\scr P_{N,k}$, $k\equiv 3\!\!\mod 4$. Note that, for all $p\in \scr P_{N,k}$, if $(j,\ell-r_j)\in C_{p,+}$ then $(j,\ell+r_j)\notin C_{p,+}$, so, as in type A, the latter condition is redundant in this case. For any $p\in P_{N,k}$, we have $p= \big( (0,y_0), (2,y_1), \dots, (2N-4,y_{N-2}), (2N-2,y_{N-1}), (2N-1,y_N) \big)$ for some $(y_i)_{0\leq i\leq N}\in \R^{N+1}$. We define the lowering moves case-by-case:
\begin{enumerate}[i)]
\item If $(j,\ell-2)\in C_{p,+}$  for some $j<N-1$, we have $\ell=y_{j-1} = y_j+2=y_{j+1}$ and we define $p\scr A_{j,\ell}^{-1}  := \big( (0,y_0),\dots, (2j-2,y_{j-1}) , (2j,y_j+4), (2j+2,y_{j+1}), \dots, (2N-2,y_{N-1}), (2N-1,y_N) \big)$.
\item If $(N-1,\ell-2)\in C_{p,+}$, we have $\ell = y_{N-2} = y_{N-1}+2 = y_{N}+1-\eps$ and we define $p\scr A_{j,\ell}^{-1}  := \big( (0,y_0),\dots , (2N-4,y_{N-2}),(2N-2,y_{N-1}+4), (2N-1,y_N+2-2\eps) \big)$.
\item If $(N,\ell-1)\in C_{p,+}$, we have $\ell = y_{N-1} = y_N+1+\eps$ and we define $p\scr A_{j,\ell}^{-1}  := \big( (0,y_0),\dots , (2N-4,y_{N-2}),(2N-2,y_{N-1}), (2N-1,y_N+2+2\eps) \big)$. 
\end{enumerate}
In each case, $p\scr A_{j,\ell}^{-1}\in\scr P_{N,k}$.
Pictorially, these moves are:
\be \text{i) } \begin{tikzpicture}[baseline=0cm,scale=.35,yscale=-1]
\draw[help lines] (-2,-2) grid (2,2);
\draw (-3,-3) -- (-1,-3);\draw (1,-3) -- (3,-3);
\draw[dashed] (-5,-3) -- (-3,-3);\draw[dashed] (3,-3) -- (5,-3);
\foreach \y in {-2,+2} {\node at (-5,\y) {$\scriptstyle \ell\y$};}
\node at (-5,0) {$\scriptstyle \ell$};
\begin{scope}[xscale=1]
\draw (-2,-3) -- (2,-3);
\draw[dashed] (-4,-3) -- (-2,-3);\draw[dashed] (2,-3) -- (4,-3);
\foreach \x in {-1,+1} {\filldraw[fill=white] (2*\x,-3) circle (2mm) node[above=1mm] {$\scriptscriptstyle j\x$}; }
\filldraw[fill=white] (0,-3) circle (2mm) node[above=1mm] {$\scriptstyle j$};
\end{scope}
\begin{scope}[every node/.style={minimum size=.1cm,inner sep=0mm,fill,circle}]
\draw[thick] (-2,0) node {} -- (0,2) node{} -- (2,0) node{};
\end{scope}
\draw [thick,dotted] (2,0) -- (0,-2) -- (-2,0);
\draw[double distance=.5mm,->,shorten <= 2mm,shorten >= 2mm] (0,-2) -- (0,2);
\end{tikzpicture}\quad\text{ii)}
\begin{tikzpicture}[baseline=0cm,scale=.35,yscale=-1]
\draw[help lines] (-3,-2) grid (3,2);
\draw[double,->] (-1,-3) -- (0,-3);
\draw[double,->] (1,-3) -- (0,-3);
\draw (-3,-3) -- (-1,-3);\draw (1,-3) -- (3,-3);
\draw[dashed] (-5,-3) -- (-3,-3);\draw[dashed] (3,-3) -- (5,-3);
\filldraw[fill=white] (-3,-3) circle (2mm) node[above=3mm, left=-3mm] {$\scriptscriptstyle N-2$};
\filldraw[fill=white] (-1,-3) circle (2mm) node[above=3mm,left=-3mm] {$\scriptscriptstyle N-1$};
\filldraw[fill=white] (0,-3) circle (2mm) node[above=1mm] {$\scriptscriptstyle N$};
\filldraw[fill=white] (1,-3) circle (2mm) node[above=3mm,right=-3mm] {$\scriptscriptstyle N-1$};
\filldraw[fill=white] (3,-3) circle (2mm) node[above=3mm,right=-3mm] {$\scriptscriptstyle N-2$};
\foreach \y in {-2,+2} {\node at (-5,\y) {$\scriptstyle \ell\y$};}
\node at (-5,0) {$\scriptstyle \ell$};
\begin{scope}[xscale=-1]
\begin{scope}[every node/.style={minimum size=.1cm,inner sep=0mm,fill,circle}]
\draw[thick]   (0,.7) node {}  -- (1,2)  node {} -- (3,0) node {}  ;
\end{scope}
\draw [thick,dotted] (0,-.7) -- (1,-2) -- (3,0);
\draw[double distance=.5mm,->,shorten <= 2mm,shorten >= 2mm] (1,-2) -- (1,2);\end{scope}
\end{tikzpicture} \quad\text{iii) }
\begin{tikzpicture}[baseline=0cm,scale=.35,yscale=-1]
\draw[help lines] (-3,-2) grid (3,2);
\draw[double,->] (-1,-3) -- (0,-3);
\draw[double,->] (1,-3) -- (0,-3);
\draw (-3,-3) -- (-1,-3);\draw (1,-3) -- (3,-3);
\draw[dashed] (-5,-3) -- (-3,-3);\draw[dashed] (3,-3) -- (5,-3);
\filldraw[fill=white] (-3,-3) circle (2mm) node[above=3mm, left=-3mm] {$\scriptscriptstyle N-2$};
\filldraw[fill=white] (-1,-3) circle (2mm) node[above=3mm,left=-3mm] {$\scriptscriptstyle N-1$};
\filldraw[fill=white] (0,-3) circle (2mm) node[above=1mm] {$\scriptscriptstyle N$};
\filldraw[fill=white] (1,-3) circle (2mm) node[above=3mm,right=-3mm] {$\scriptscriptstyle N-1$};
\filldraw[fill=white] (3,-3) circle (2mm) node[above=3mm,right=-3mm] {$\scriptscriptstyle N-2$};
\foreach \y in {-2,+2} {\node at (-5,\y) {$\scriptstyle \ell\y$};}
\node at (-5,0) {$\scriptstyle \ell$};
\begin{scope}[xscale=-1]
\begin{scope}[every node/.style={minimum size=.1cm,inner sep=0mm,fill,circle}]
\draw[thick]  (0,1.2) node {}-- (1,0)node {} ;
\end{scope}
\draw [thick,dotted] (0,-1.2) -- (1,0);
\draw[double distance=.5mm,->,shorten <= 2mm,shorten >= 2mm] (0,-1) -- (0,1);\end{scope}
\end{tikzpicture}\nn.\ee

Next we define the lowering moves on $\scr P_{N,k}$ with $k\equiv 1\!\!\mod 4$. Informally, these are simply the mirror images of the moves above. 
For any $p\in \scr P_{N,k}$, we have $p= \big( (4N-2,y_0), (4N-4,y_1), \dots, (2N +2,y_{N-2}), (2N,y_{N-1}), (2N-1,y_{N})\big)$ for some $(y_i)_{0\leq i\leq N}\in \R^{N+1}$. We define the lowering moves case-by-case:
\begin{enumerate}[i)]
\item If $(j,\ell-2)\in C_{p,+}$ for some $j<N-1$, we have $\ell=y_{j+1} = y_j+2=y_{j-1}$ and we define $p\scr A_{j,\ell}^{-1}  :=  \big((4N-2,y_0),  (4N-4,y_1),    \dots, (2j+2,y_{j-1}) , (2j,y_j+4), (2j-2,y_{j+1}), \dots, (2N+2,y_{N-2}),  (2N,y_{N-1}), (2N-1,y_N)  \big)$.
\item If $(N-1,\ell-2)\in C_{p,+}$, we have $\ell = y_{N}+1-\eps = y_{N-1}+2 = y_{N-2}$ and we define $p\scr A_{j,\ell}^{-1}  :=   \big( (4N-2,y_0), (4N-4,y_1), \dots, (2N +2,y_{N-2}), (2N,y_{N-1}+4), (2N-1,y_{N}+2-2\eps)\big)$.
\item If $(N,\ell-1)\in C_{p,+}$, we have $\ell = y_N+1+\eps=y_{N-1}$ and we define $p\scr A_{j,\ell}^{-1}  := \big( (4N-2,y_0), (4N-4,y_1), \dots, (2N +2,y_{N-2}), (2N,y_{N-1}), (2N-1,y_{N}+2+2\eps)\big)$. 
\end{enumerate}
In each case, $p\scr A_{j,\ell}^{-1}\in\scr P_{N,k}$. 
That is, pictorially,
\be \text{i) } \begin{tikzpicture}[baseline=0cm,scale=.35,yscale=-1]
\draw[help lines] (-2,-2) grid (2,2);
\draw (-3,-3) -- (-1,-3);\draw (1,-3) -- (3,-3);
\draw[dashed] (-5,-3) -- (-3,-3);\draw[dashed] (3,-3) -- (5,-3);
\foreach \y in {-2,+2} {\node at (-5,\y) {$\scriptstyle \ell\y$};}
\node at (-5,0) {$\scriptstyle \ell$};
\begin{scope}[xscale=-1]
\draw (-2,-3) -- (2,-3);
\draw[dashed] (-4,-3) -- (-2,-3);\draw[dashed] (2,-3) -- (4,-3);
\foreach \x in {-1,+1} {\filldraw[fill=white] (2*\x,-3) circle (2mm) node[above=1mm] {$\scriptscriptstyle j\x$}; }
\filldraw[fill=white] (0,-3) circle (2mm) node[above=1mm] {$\scriptstyle j$};
\end{scope}
\begin{scope}[every node/.style={minimum size=.1cm,inner sep=0mm,fill,circle}]
\draw[thick] (-2,0) node {} -- (0,2) node{} -- (2,0) node{};
\end{scope}
\draw [thick,dotted] (2,0) -- (0,-2) -- (-2,0);
\draw[double distance=.5mm,->,shorten <= 2mm,shorten >= 2mm] (0,-2) -- (0,2);
\end{tikzpicture}\quad\text{ii)}
\begin{tikzpicture}[baseline=0cm,scale=.35,yscale=-1]
\draw[help lines] (-3,-2) grid (3,2);
\draw[double,->] (-1,-3) -- (0,-3);
\draw[double,->] (1,-3) -- (0,-3);
\draw (-3,-3) -- (-1,-3);\draw (1,-3) -- (3,-3);
\draw[dashed] (-5,-3) -- (-3,-3);\draw[dashed] (3,-3) -- (5,-3);
\filldraw[fill=white] (-3,-3) circle (2mm) node[above=3mm, left=-3mm] {$\scriptscriptstyle N-2$};
\filldraw[fill=white] (-1,-3) circle (2mm) node[above=3mm,left=-3mm] {$\scriptscriptstyle N-1$};
\filldraw[fill=white] (0,-3) circle (2mm) node[above=1mm] {$\scriptscriptstyle N$};
\filldraw[fill=white] (1,-3) circle (2mm) node[above=3mm,right=-3mm] {$\scriptscriptstyle N-1$};
\filldraw[fill=white] (3,-3) circle (2mm) node[above=3mm,right=-3mm] {$\scriptscriptstyle N-2$};
\foreach \y in {-2,+2} {\node at (-5,\y) {$\scriptstyle \ell\y$};}
\node at (-5,0) {$\scriptstyle \ell$};
\begin{scope}[xscale=1]
\begin{scope}[every node/.style={minimum size=.1cm,inner sep=0mm,fill,circle}]
\draw[thick]   (0,.7) node {}  -- (1,2)  node {} -- (3,0) node {}  ;
\end{scope}
\draw [thick,dotted] (0,-.7) -- (1,-2) -- (3,0);
\draw[double distance=.5mm,->,shorten <= 2mm,shorten >= 2mm] (1,-2) -- (1,2);\end{scope}
\end{tikzpicture} \quad\text{iii) }
\begin{tikzpicture}[baseline=0cm,scale=.35,yscale=-1]
\draw[help lines] (-3,-2) grid (3,2);
\draw[double,->] (-1,-3) -- (0,-3);
\draw[double,->] (1,-3) -- (0,-3);
\draw (-3,-3) -- (-1,-3);\draw (1,-3) -- (3,-3);
\draw[dashed] (-5,-3) -- (-3,-3);\draw[dashed] (3,-3) -- (5,-3);
\filldraw[fill=white] (-3,-3) circle (2mm) node[above=3mm, left=-3mm] {$\scriptscriptstyle N-2$};
\filldraw[fill=white] (-1,-3) circle (2mm) node[above=3mm,left=-3mm] {$\scriptscriptstyle N-1$};
\filldraw[fill=white] (0,-3) circle (2mm) node[above=1mm] {$\scriptscriptstyle N$};
\filldraw[fill=white] (1,-3) circle (2mm) node[above=3mm,right=-3mm] {$\scriptscriptstyle N-1$};
\filldraw[fill=white] (3,-3) circle (2mm) node[above=3mm,right=-3mm] {$\scriptscriptstyle N-2$};
\foreach \y in {-2,+2} {\node at (-5,\y) {$\scriptstyle \ell\y$};}
\node at (-5,0) {$\scriptstyle \ell$};
\begin{scope}[xscale=1]
\begin{scope}[every node/.style={minimum size=.1cm,inner sep=0mm,fill,circle}]
\draw[thick]  (0,1.2) node {}-- (1,0)node {} ;
\end{scope}
\draw [thick,dotted] (0,-1.2) -- (1,0);
\draw[double distance=.5mm,->,shorten <= 2mm,shorten >= 2mm] (0,-1) -- (0,1);\end{scope}
\end{tikzpicture}\nn.\ee

Finally, for all $(i,k)\in \It$ with $i<N$, one has (\confer \ref{bvectpathdef1}) that every path $p\in\scr P_{i,k}$ is equivalent to a pair $(a,b)$ where $a\in \scr P_{N,k-(2N-2i-1)}$ and $b\in \scr P_{N,k+(2N-2i-1)}$. By construction, $a$ and $b$ can share no upper corners. Thus the set of upper corners of $p$ is a subset of the \emph{disjoint} union of the sets of upper corners of $a$ and $b$. If $p$ can be lowered at $(j,\ell)$, exactly one of $a$ and $b$ can be lowered at $(j,\ell)$, and we define  $p\scr A_{j,\ell}^{-1}$ to be given by whichever of the pairs $\left( a \scr A_{j,\ell}^{-1},  b\right)$ and $\left( a , b\scr A_{j,\ell}^{-1} \right)$ is defined.  
It is here that the second criteria for it to be possible to lower $p$ at $(j,\ell)$, namely that $(j,\ell+r_j)\notin C_{p,+}$, is used, for it ensures that the condition on $a_N-\bar a_N$ in (\ref{bvectpathdef1}) is preserved.   

\subsection{Raising moves} 
Let $(i,k)\in \It$ and $(j,\ell)\in \Iw$. We say a path $p\in \scr P_{i,k}$ can be \emph{raised at} $(j,\ell)$ if and only if $p=p'\scr A_{j,\ell}^{-1}$ for some $p'\in \scr P_{i,k}$. If $p'$ exists it is unique, and we define $p\scr A_{j,\ell}:=p'$. It is straightforward to verify that $p$ can be raised at $(j,\ell)$ if and only if  $(j,\ell+r_j)\in C_{p,-}$ and $(j,\ell-r_j)\notin C_{p,-}$. 

\subsection{The highest/lowest path}\label{sec:highestpath} For all $(i,k)\in \It$, define $\phigh_{i,k}$, the \emph{highest path} to be the unique path in $\scr P_{i,k}$ with no lower corners. 
Equivalently, $\phigh_{i,k}$ is the unique path such that:
\begin{align}\text{Type A}& :  &(i,k) &\in \phigh_{i,k}  \nn\\
\text{Type B},&\,\, i<N:  &\iota(i,k) &\in \phigh_{i,k} \nn\\
\text{Type B},&\,\, i=N:  &(2N-1,k) - (0,\eps)&\in \phigh_{N,k} .\nn\end{align} 
Define $\plow_{i,k}$, the \emph{lowest path}, to be the unique path in $\scr P_{i,k}$ with no upper corners. 
Equivalently, $\plow_{i,k}$ is the unique path such that:
\begin{align}\text{Type A}& :  &(N+1-i,k+N+1) &\in \plow_{i,k}  \nn\\
\text{Type B},&\,\, i<N:  &\iota(i,k+4N-2) &\in \plow_{i,k} \nn\\
\text{Type B},&\,\, i=N:  &(2N-1,k+4N-2) + (0,\eps)&\in \plow_{N,k} .\nn\end{align}  

\subsection{Non-overlapping paths}\label{overlapdef}
Let $p,p'$ be paths. We say $p$ is \emph{strictly above} $p'$, and $p'$ is \emph{strictly below} $p$, if and only if
\be  (x,y)\in p \text{ and } (x,z)\in p'  \implies  y< z .\nn\ee
We say a $T$-tuple of paths $(p_1,\dots,p_T)$ is \emph{non-overlapping} if and only if $p_s$ is strictly above $p_t$ for all $s<t$. Otherwise, for some $s<t$ there exist $(x,y)\in p_s$ and $(x,z)\in p_t$ such that $y\geq z$, and we say \emph{ $p_s$ overlaps $p_t$ in column $x$}. 

For any snake $(i_t,k_t)\in \It$, $1\leq t \leq T$, $T\in \Z_{\geq 1}$, let us define 
\be \nops := \left\{(p_1,\dots,p_T): p_t\in \scr P_{i_t,k_t}, 1\leq t\leq T\,,  (p_1,\dots,p_T)\text{ is non-overlapping } \right\}. \nn\ee
\begin{lem}\label{disjcorners}
Let $(i_t,k_t)\in \It$, $1\leq t \leq T$, be a snake of length $T\in \Z_{\geq 1}$ and $(p_1,\dots,p_T) \in \nops$.
Suppose  $(i,k)\in C_{p_t,\pm}$ for some $t$, $1\leq t\leq T$. Then
\begin{enumerate}[(i)]
\item $(i,k)\notin C_{p_s,\pm}$ for any $s\neq t$, $1\leq s \leq T$, and
\item 
  (Type {\rm A}) $(i,k)\notin C_{p_s,\mp}$ for any $s$, $1\leq s \leq T$ \\
  (Type {\rm B}) if $(i,k)\in C_{p_s,\mp}$ for some $s$, $1\leq s\leq T$, then $s=t\pm1$ and $i=N$.
\end{enumerate}
\end{lem}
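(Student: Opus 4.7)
The plan is a case analysis driven by the definitions of corners and the non-overlapping condition. The key observation is that each element of $C_{p,\pm}$ is indexed by a specific point (or, for the type~B spin node, a choice of two points) in the plane: for $i<N$, both $(i,k)\in C_{p,+}$ and $(i,k)\in C_{p,-}$ demand $\iota(i,k)\in p$; for $i=N$, $(N,k)\in C_{p,+}$ demands $(2N-1,k-\eps)\in p$ together with $(2N-1,k+\eps)\notin p$, while $(N,k)\in C_{p,-}$ demands the opposite. The snake hypothesis enters only through $\nops$, so the proof should not need any finer structure of the snake beyond the non-overlapping ordering.

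For part (i), I would assume $(i,k)\in C_{p_s,+}\cap C_{p_t,+}$ with $s\neq t$ and note that $p_s$ and $p_t$ must then share a common point of the plane, either $\iota(i,k)$ or $(2N-1,k-\eps)$, directly contradicting the non-overlapping condition. The case $C_{p_s,-}\cap C_{p_t,-}$ is identical.

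For part (ii) I would split into three subcases. First, if $s=t$ then a single path cannot have $\iota(i,k)$ as both an upper and a lower corner (the neighbour conditions are mutually exclusive), and in the $i=N$ case the two definitions ask $(2N-1,k+\eps)$ to both lie and not lie on $p_s$. Second, if $s\neq t$ and $i<N$, the same shared-point argument as in (i) applies, since both $(i,k)\in C_{p_t,+}$ and $(i,k)\in C_{p_s,-}$ refer to $\iota(i,k)\in p$.

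The remaining, essential, case is $s\neq t$ with $i=N$ in type~B, because the two plane points $(2N-1,k\pm\eps)$ are genuinely distinct. Here $p_t\ni(2N-1,k-\eps)$ and $p_s\ni(2N-1,k+\eps)$, so non-overlapping is consistent only with the path of smaller $y$-coordinate appearing first; after possibly relabelling, $t<s$. If some $r$ satisfied $t<r<s$, then every column-$2N-1$ point of $p_r$ would have $y$-coordinate strictly between $k-\eps$ and $k+\eps$. Since $(N,k)\in\It$ forces $k$ odd, inspection of the definitions of $\scr P_{N,\ell}$ and $\scr P_{i,\ell}$ shows that column-$2N-1$ $y$-coordinates of any type~B path take the form (odd integer)$\pm\eps$; together with the standing constraint $1/2>\eps>0$, no such value can sit in the open interval $(k-\eps,k+\eps)$. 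As every type~B path contains at least one column-$2N-1$ point, no intermediate $r$ exists and $s=t+1$. The parity-and-$\eps$ bookkeeping at the spin node is the step requiring most care; everywhere else the lemma reduces to the basic observation that two non-overlapping paths cannot share a point in the plane.
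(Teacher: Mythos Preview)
Your argument is correct and is exactly what the paper intends: its own proof is the single sentence ``This follows from the definition of non-overlapping paths,'' together with a pointer to Figure~\ref{nocrossfigureBprime} illustrating the type~B spin-node exception, so you have simply supplied the details. One small wording slip: your shared-point case ``$s\neq t$ and $i<N$'' should also cover $i=N$ in type~A (where $\iota$ is the identity), but the logic there is identical.
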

\begin{proof}
This follows from the definition of non-overlapping paths. Examples of the last part (pairs non-overlapping paths in type B which share a common corner) are shown in Figure \ref{nocrossfigureBprime}.
\end{proof}
Thus, for any $\ps\in \nops$ and any $(j,\ell)\in \Iw$,  at most one of the paths can be lowered at $(j,\ell)$ and at most one of the paths can be raised at $(j,\ell)$.  
We can therefore speak without ambiguity of performing a raising or lowering move at $(j,\ell)$ on a non-overlapping tuple of paths $\ps\in\nops$, to yield a new tuple $\ps \scr A_{j,\ell}^{\pm 1}$.

Lowering moves on tuples of non-overlapping paths can introduce overlaps. However, we have the following lemma which gives information about how such overlaps arise.
\begin{lem}\label{firstXlem}
Let $(i_t,k_t)\in \It$, $1\leq t\leq T$, be a snake of length $T\in \Z_{\geq 1}$ and $\ps\in\nops$. Let $t$, $1\leq t\leq T$, and  $(j,\ell)\in \Iw$ be such that the path $p_t$ can be lowered at $(j,\ell)$. This move introduces an overlap if and only if there is an $s$, $t<s\leq T$, such that $p_s$ has an upper corner at $(j,\ell+r_j)$ or a lower corner at $(j,\ell-r_j)$.

Similarly, let $t$, $1\leq t\leq T$, and  $(j,\ell)\in \Iw$ be such that the path $p_t$ can be raised at $(j,\ell)$. This move introduces an overlap if and only if there is an $s$, $1\leq s<t$, such that $p_s$ has a lower corner at $(j,\ell-r_j)$ or an upper corner at $(j,\ell+r_j)$. 
\end{lem}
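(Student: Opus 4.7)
The plan is to establish both implications of the biconditional by direct local analysis of the lowering move, and then deduce the raising statement by symmetry. Since the lowering move at $(j,\ell)$ modifies $p_t$ only in a bounded neighbourhood of column $\iota(j,\cdot)$ — a single column in type A and type B case (i), or two adjacent columns in type B cases (ii) and (iii) — any overlap it introduces must occur in those columns. Moreover the move shifts $p_t$ "downward" (increases its $y$-coordinate), so the strict inequalities $y^{(s)}_\bullet < y^{(t)}_\bullet$ for $s<t$ are preserved, and only paths $p_s$ with $s>t$ can come into conflict with the updated $p_t$.

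Next I would exploit the parity/spacing constraints inherent in the definition of $\scr P_{i,k}$. In type A, the requirement $(i,k)\in \It$ (i.e.\ $i+k$ odd) forces every admissible path to satisfy $y_r \equiv i+k+r\pmod 2$ at each column, so any two paths from our tuple share the same parity at every column. Together with the strict non-overlapping inequality, this yields a minimum positive gap between $p_s$ and $p_t$ before the move. A direct computation shows that the lowering move closes this gap at column $j$ precisely when $y^{(s)}_j$ attains its minimum admissible value; the resulting configuration at $(j,y^{(s)}_j)$ is exactly an upper corner of $p_s$ at $(j,\ell+r_j)$. In type B the same strategy applies, using the analogous parity/spacing data implicit in (\ref{iotadef}) and in the definition of $\scr P_{N,k}$, with the understanding that at the spin column $2N-1$ the spacing is governed by the shift $\pm(1+\eps)$ rather than by integer steps.

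For the converse, assuming the existence of such a corner in $p_s$, one reads off directly that $y^{(s)}_j$ coincides with (or dominates) the updated $y^{(t)}_j$, producing the overlap. The raising statement is then immediate from the fact that raising at $(j,\ell)$ is the inverse of lowering at $(j,\ell)$: raising sends $p_t$ upward, so only $p_s$ with $s<t$ can cause new overlaps, and the roles of upper/lower corners are interchanged, with $(j,\ell-r_j)$ and $(j,\ell+r_j)$ swapped accordingly.

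The main obstacle I anticipate is the type B analysis near the spin node. The three sub-cases of the lowering move in \S\ref{lowdef} interact with the non-standard vertical spacing at column $2N-1$, and the "lower corner at $(j,\ell-r_j)$" alternative in the statement is needed precisely to capture the shared-corner configurations at $i=N$ allowed by Lemma \ref{disjcorners}(ii). The cleanest route seems to be to verify the equivalence first for paths in $\scr P_{N,k}$ (where the local geometry is simplest), and then to use the identification in (\ref{bvectpathdef1}) — a path in $\scr P_{i,k}$, $i<N$, is a matched pair of paths in $\scr P_{N,k\mp(2N-2i-1)}$, and a lowering move acts on exactly one component — to bootstrap to general $i$. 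The second condition in the definition of "can be lowered" (that $(j,\ell+r_j)\notin C_{p,+}$) is exactly what ensures the pairing condition $a_N-\bar a_N=(0,y)$, $y>0$ is preserved, and will need to be invoked explicitly in this reduction.
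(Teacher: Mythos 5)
Your overall strategy --- locality of the move, the observation that only paths $p_s$ with $s>t$ can conflict with a lowered $p_t$, a parity/spacing computation to identify exactly when the gap closes, and symmetry for the raising case --- is essentially the paper's own ``by inspection'' argument, and it does go through in type A and for the easy converse implication in both types. But in type B your forward implication has a genuine gap: every spacing fact you propose to use is drawn from the definitions of the individual sets $\scr P_{i,k}$ and of $\iota$, together with non-overlapping and the pairing condition in (\ref{bvectpathdef1}); nowhere do you invoke the congruence conditions modulo $4$ in the definition of snake position (\S\ref{snakepos}). These are indispensable: the Remark immediately following Lemma \ref{firstXlem} notes that the lemma fails if they are dropped, e.g.\ for $((N,1),(N,5))$ --- a pair for which your ingredients (the two path sets, non-overlapping, the pairing condition) are all unchanged. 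So an argument of exactly the shape you describe would also ``prove'' a false statement, which means an ingredient is missing.

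To see where, consider a case (ii) lowering of $p_t$ at $(N-1,\ell)$ acting on its left-approaching half, so the upper corner sits in column $2N-2$ at height $\ell-2$ and the corresponding spin endpoint is at $\ell-1+\eps$. Working out which heights are available at columns $2N-2$ and $2N-1$, the only way a lower path $p_s$ can collide with the lowered $p_t$ \emph{other} than by having an upper corner at $(N-1,\ell+2)$ is if a left-approaching half of $p_s$ ends at spin height $\ell+1-\eps$ and passes through $(2N-2,\ell+2)$: such a $p_s$ does not overlap $p_t$ before the move, yet it has an upper corner at $(N,\ell+1)$ and neither of the two corners named in the lemma. Ruling this configuration out is not a single-path spacing fact: one has to use that the other, right-approaching, spin endpoint of $p_s$ or of $p_t$ (or of an intervening right-approaching spin path, whose presence is forced by the way left/right chirality alternates along a snake) would have to lie strictly between $\ell-1+\eps$ and $\ell+1-\eps$, where no right-approaching height exists --- and the chirality alternation is precisely what the mod-$4$ snake congruences encode. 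Your reduction to $\scr P_{N,k}$ plus the bootstrap through (\ref{bvectpathdef1}) does not supply this, so the type B forward direction needs an explicit argument tracking the residues of the $k_t$ along the snake. (A minor slip besides: the case (iii) move alters only the spin column, not two adjacent columns.)
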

\begin{proof}
This is seen by inspection of the definitions above of paths and moves. We sketch the distinct cases, up to symmetry, in type B. In most cases the overlap occurs at an upper corner $(j,\ell+r_j)$ of $p_s$:
\be\nn
\begin{tikzpicture}[baseline=0cm,scale=.35,yscale=-1]
\begin{scope}[xshift=12cm]
\draw[help lines] (-1,0) grid (3,6);
\draw (-1,-1) -- (3,-1);
\filldraw[fill=white] (-1,-1) circle (2mm) node[above=3mm,left=-3mm] {$\scriptscriptstyle j+1$};
\filldraw[fill=white] (1,-1) circle (2mm) node[above=3mm,right=-3mm] {$\scriptscriptstyle j$};
\filldraw[fill=white] (3,-1) circle (2mm) node[above=3mm,right=-3mm] {$\scriptscriptstyle j-1$};
\draw[thick,dotted]  (-1,2) -- (1,0) -- (3,2) ;
\begin{scope}[every node/.style={minimum size=.1cm,inner sep=0mm,fill,circle}]
\draw[thick,gray] (-1,6)  node {} -- (1,4) node {}  -- (3,6)node {} ;
\draw[thick]  (-1,2) node {} -- (1,4) node {} -- (3,2) node {}  ;
\end{scope}

\draw[double distance=.5mm,->,shorten <= 2mm,shorten >= 2mm] (1,0) -- (1,4);

\end{scope}
\begin{scope}[xshift=5cm]
\draw[help lines] (0,0) grid (3,6);
\draw[double distance =.2mm,->,shorten >= 1mm] (-1,-1) -- (0,-1);
\draw[double distance =.2mm,->,shorten >= 1mm] (1,-1) -- (0,-1);
\draw (1,-1) -- (3,-1);
\filldraw[fill=white] (-1,-1) circle (2mm) node[above=3mm,left=-3mm] {$\scriptscriptstyle N-1$};
\filldraw[fill=white] (0,-1) circle (2mm) node[above=1mm] {$\scriptscriptstyle N$};
\filldraw[fill=white] (1,-1) circle (2mm) node[above=3mm,right=-3mm] {$\scriptscriptstyle N-1$};
\filldraw[fill=white] (3,-1) circle (2mm) node[above=3mm,right=-3mm] {$\scriptscriptstyle N-2$};
\draw[thick,dotted]  (0,1.3) -- (1,0) -- (3,2) ;
\begin{scope}[every node/.style={minimum size=.1cm,inner sep=0mm,fill,circle}]
\draw[thick,gray] (0,5.3) node {} -- (1,4) node {}   -- (3,6)node {} ;
\draw[thick]  (0,2.7)node {}  -- (1,4)node {}  -- (3,2)node {}  ;
\end{scope}
\draw[double distance=.5mm,->,shorten <= 2mm,shorten >= 2mm] (1,0) -- (1,4);
\end{scope}
\begin{scope}[xshift=0cm]
\draw[help lines] (-1,1) grid (1,4);
\draw[double distance =.2mm,->,shorten >= 1mm] (-1,-1) -- (0,-1);
\draw[double distance =.2mm,->,shorten >= 1mm] (1,-1) -- (0,-1);
\filldraw[fill=white] (-1,-1) circle (2mm) node[above=3mm,left=-3mm] {$\scriptscriptstyle N-1$};
\filldraw[fill=white] (0,-1) circle (2mm) node[above=1mm] {$\scriptscriptstyle N$};
\filldraw[fill=white] (1,-1) circle (2mm) node[above=3mm,right=-3mm] {$\scriptscriptstyle N-1$};
\draw[thick,dotted]  (-1,2) -- (0,.7) ;
\begin{scope}[every node/.style={minimum size=.1cm,inner sep=0mm,fill,circle}]
\draw[thick,gray]  (0,2.9)node {}  -- (1,4)node {}  ;
\draw[thick]  (-1,2) node {} -- (0,3.3)node {}  ;
\end{scope}
\draw[double,->,shorten <= 2mm,shorten >= 2mm] (-.3,1) -- (-.3,3);
\end{scope}
\end{tikzpicture}
\nn.\ee
The exception is when the upper corner $(j,\ell-r_j)$ of $p_t$ is also a lower corner of $p_s$, which can happen only when $j=N$ -- \confer Lemma \ref{disjcorners} and Figure \ref{nocrossfigureBprime}: 
\be\begin{tikzpicture}[baseline=0cm,scale=.35,yscale=-1]
\begin{scope}[xshift=0cm]
\draw[help lines] (-1,1) grid (1,4);
\draw[double distance =.2mm,->,shorten >= 1mm] (-1,-1) -- (0,-1);
\draw[double distance =.2mm,->,shorten >= 1mm] (1,-1) -- (0,-1);
\filldraw[fill=white] (-1,-1) circle (2mm) node[above=3mm,left=-3mm] {$\scriptscriptstyle N-1$};
\filldraw[fill=white] (0,-1) circle (2mm) node[above=1mm] {$\scriptscriptstyle N$};
\filldraw[fill=white] (1,-1) circle (2mm) node[above=3mm,right=-3mm] {$\scriptscriptstyle N-1$};
\draw[thick,dotted]  (-1,3) -- (0,1.7) ;

\draw[double,->,shorten <= 2mm,shorten >= 2mm] (-.3,2) -- (-.3,4);

\begin{scope}[every node/.style={minimum size=.1cm,inner sep=0mm,fill,circle}]
\draw[thick,gray]  (0,2.3) node {} -- (1,1) node {} ;
\draw[thick]  (-1,3) node {} -- (0,4.3) node {} ;
\end{scope}

\end{scope}
\end{tikzpicture}.
\nn\ee

\end{proof}

\begin{rem}
This property of our tuples of paths is vital for the validity of Lemma \ref{inthinsimplelem}  and hence of Theorem \ref{snakechar} below. Informally speaking, it means that the first overlap between paths always corresponds, in the $q$-character, to an illegal lowering step in some $\uqslt$ evaluation module. That is, in the ball-and-box picture of thin $\uqslt$-modules (\S\ref{boxpic}) it always corresponds to trying to perform an illegal move $\bb\bb\to\eb\eb$ or $\eb\eb\to\wb\wb$. 
It should be noted that Lemma \ref{firstXlem} would not hold if the definition  of snake position (\S\ref{snakepos}) in type B were widened by dropping the conditions on $k-k'\mod 4$. For example, consider $((N,1), (N,5))$. The first overlap between in a path in $\scr P_{(N,1)}$ and a path in $\scr P_{(N,5)}$ is not necessarily of the type in Lemma \ref{firstXlem}, \be\nn\begin{tikzpicture}[baseline=0cm,scale=.35,yscale=-1,xscale=-1]
\draw[help lines] (-3,-2) grid (0,4);
\draw[double distance =.2mm,->,shorten >= 1mm] (-1,-3) -- (0,-3);
\draw[double distance =.2mm,->,shorten >= 1mm] (1,-3) -- (0,-3);
\draw (-3,-3) -- (-1,-3);
\filldraw[fill=white] (-3,-3) circle (2mm);
\filldraw[fill=white] (-1,-3) circle (2mm);
\filldraw[fill=white] (0,-3) circle (2mm) ;
\filldraw[fill=white] (1,-3) circle (2mm) ;
\begin{scope}[every node/.style={minimum size=.1cm,inner sep=0mm,fill,circle}]
\draw[thick,gray]  (-3,4.1)node {}  -- (-1,2.1) node {} -- (0,.8)node {} ;
\draw [thick] (-3,0)node {}  -- (-1,-2)node {}  -- (0,-.7)node {} ;
\end{scope}
\draw[double distance=.5mm,->] (-5,1) -- (-7,1);
\begin{scope}[xshift=-9cm]
\draw[help lines] (-3,-2) grid (0,4);
\draw[double distance =.2mm,->,shorten >= 1mm] (-1,-3) -- (0,-3);
\draw[double distance =.2mm,->,shorten >= 1mm] (1,-3) -- (0,-3);
\draw (-3,-3) -- (-1,-3);
\filldraw[fill=white] (-3,-3) circle (2mm);
\filldraw[fill=white] (-1,-3) circle (2mm);
\filldraw[fill=white] (0,-3) circle (2mm) ;
\filldraw[fill=white] (1,-3) circle (2mm) ;


\begin{scope}[every node/.style={minimum size=.1cm,inner sep=0mm,fill,circle}]
\draw[thick,gray]  (-3,4.1) node {} -- (-1,2.1)node {}  -- (0,.8)node {} ;
\draw[thick]  (-3,-.1)node {}  -- (-1,1.9)node {}  -- (0,.6)node {}  ;
\end{scope}

\end{scope}
\end{tikzpicture}\ee
and could thus correspond to a legal lowering step $\bb\eb \to \eb\wb$ in the $q$-character. \end{rem}

Our definitions of paths and moves are so constructed that we have 
\begin{lem}\label{steplemma}
Let $(i_t,k_t)\in \It$, $1\leq t\leq T$, be a snake of length $T\in \Z_{\geq 1}$ and $\ps\in\nops$. If $\pps = \ps \scr A_{j,\ell}^{\pm 1}$, where $(j,\ell)\in\Iw$ is any point at which $\ps$ can be raised/lowered, then $\prod_{t=1}^T \mon(p'_t) = A_{j,\ell}^{\pm1}\prod_{t=1}^T \mon(p_t)$. 
\qed\end{lem}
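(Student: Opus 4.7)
The plan is to reduce the statement to a computation for a single path, and then verify that reduction by a direct case-check. Since the raising/lowering moves on a tuple are defined so that exactly one path $p_t$ in the tuple is affected (all other $p_s$, and hence $\mon(p_s)$, are unchanged), we have $\prod_s \mon(p'_s) / \prod_s \mon(p_s) = \mon(p_t \scr A_{j,\ell}^{\pm 1}) / \mon(p_t)$, so it suffices to prove the single-path identity $\mon(p \scr A_{j,\ell}^{\pm 1}) = A_{j,\ell}^{\pm 1}\, \mon(p)$. Moreover, since $\scr A_{j,\ell}$ is defined as the inverse of $\scr A_{j,\ell}^{-1}$, the raising case follows immediately from the lowering case, so I would focus on lowering moves.

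For a single path, a lowering move changes only a small, localized set of $y$-coordinates, so the only affected corners lie in a few adjacent columns. The strategy is, for each type of lowering move, to tabulate these affected corners before and after, and check that the resulting change in $\mon$ equals the explicit expression for $A_{j,\ell}^{-1}$ read off from (\ref{adef}). For type A, the move raises $y_j$ by $2$; direct inspection shows that the upper corner at $(j,\ell-1)$ is destroyed and a lower corner at $(j,\ell+1)$ is created, contributing $Y_{j,\ell-1}^{-1} Y_{j,\ell+1}^{-1}$, while at each of the columns $j\pm 1$ exactly one configuration of $y_{j\pm 2}$ holds, and in either one the net effect is to gain a single factor $Y_{j\pm 1,\ell}$ (either by creating a new upper corner or by destroying an existing lower one). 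The product is $A_{j,\ell}^{-1}$.

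For type B, a path in $\scr P_{i,k}$ with $i<N$ decomposes as a pair $(a,b)$ whose corner sets are forced to be disjoint by the strict positivity condition $a_N - \bar a_N = (0,y)$, $y>0$, in (\ref{bvectpathdef1}) together with the form of $\iota$, so $\mon(p) = \mon(a)\mon(b)$ and again only one of the two halves is altered; the verification thus reduces to a single path in some $\scr P_{N,k}$. The three subcases (i), (ii), (iii) of the lowering move must each be checked, using
\[ A_{j,\ell} = Y_{j,\ell+2} Y_{j,\ell-2} Y_{j-1,\ell}^{-1} Y_{j+1,\ell}^{-1} \quad (1<j<N-1),\]
\[ A_{N-1,\ell} = Y_{N-1,\ell+2} Y_{N-1,\ell-2} Y_{N-2,\ell}^{-1} Y_{N,\ell+1}^{-1} Y_{N,\ell-1}^{-1}, \qquad A_{N,\ell} = Y_{N,\ell+1} Y_{N,\ell-1} Y_{N-1,\ell}^{-1}. \]
Case (i) is formally identical to type A with step size $r_j=2$. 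In case (iii) the spin upper corner at $(N,\ell-1)$ (path on the $-\eps$-tine) is replaced by a lower corner at $(N,\ell+1)$ (path on the $+\eps$-tine), giving spin factor $Y_{N,\ell-1}^{-1} Y_{N,\ell+1}^{-1}$, with an additional $Y_{N-1,\ell}$ from the column $N-1$ corner change. The main obstacle is case (ii): the move simultaneously flips both $y_{N-1}$ and $y_N$, and one must verify that the spin lower corner at $(N,\ell-1)$ present beforehand (because the path sits on the $+\eps$-tine) and the spin upper corner at $(N,\ell+1)$ present afterwards (path on the $-\eps$-tine) together contribute the factor $Y_{N,\ell-1}\,Y_{N,\ell+1}$, matching the two positive $Y_N$ factors of $A_{N-1,\ell}^{-1}$. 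Combined with the column $N-2$ contribution $Y_{N-2,\ell}$ and the column $N-1$ contribution $Y_{N-1,\ell-2}^{-1} Y_{N-1,\ell+2}^{-1}$, this yields precisely $A_{N-1,\ell}^{-1}$; the $k\equiv 1\bmod 4$ cases are the mirror images.
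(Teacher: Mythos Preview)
Your proposal is correct and is exactly the verification the paper has in mind: the paper states the lemma with a bare \qed, prefaced by ``Our definitions of paths and moves are so constructed that we have'', so its ``proof'' is precisely the case-by-case inspection you carry out. Your reduction to a single path and then to a single half-path in $\scr P_{N,k}$ is valid (in particular $\mon(p)=\mon(a)\mon(\bar a)$ holds even in the borderline case where $a$ has a lower spin corner and $\bar a$ an upper spin corner at the same $(N,\ell')$, since the two $Y_{N,\ell'}^{\pm1}$ factors cancel), and your treatment of cases (i)--(iii) matches $A_{j,\ell}^{-1}$ as required.
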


\subsection{Properties of moves and paths} 
The rest of this section is concerned with establishing some facts about the relationship between paths and monomials.

Given any two paths $p=(x_r,y_r)_{1\leq r\leq n}$ and $p'=(x_r,y'_r)_{1\leq r\leq n}$, $n\in\Z_{>0}$, in $\scr P_{i,k}$ we say $p$ is \emph{weakly above} (resp. \emph{weakly below}) $p'$ if and only if $y_r\leq y'_r$ (resp. $y_r\geq y'_r$) for all $1\leq r\leq n$. We also define 
\be \topp(p,p') :=  (x_r, \min(y_r,y'_r))_{1\leq r\leq n}.\ee

The three following lemmas are proved by inspection. 
\begin{lem}\label{lcl} Any path $p\in \scr P_{i,k}$ is uniquely defined by its set of lower corners. \qed\end{lem}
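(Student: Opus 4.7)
The plan is to recover $p$ from $C_{p,-}$ by using the fact that between two consecutive lower corners the path is forced to be monotone-up-then-monotone-down, so each such stretch contains exactly one upper corner whose position is pinned by the endpoint heights.

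First I would handle type A. A path $p=((r,y_r))_{0\le r\le N+1}$ is equivalent data to the sign sequence $s_r=y_{r+1}-y_r\in\{+1,-1\}$ together with the fixed initial height $y_0=i+k$, and a lower corner at $r$ corresponds to the transition $s_{r-1}=-1,\ s_r=+1$. Between consecutive lower corners at positions $r_1<r_2$ no further $-\to +$ transition is permitted, so the only admissible sign pattern on that stretch is $+\cdots +\,-\cdots -$, and the switching index (the unique intermediate upper corner) is determined by $y_{r_1}$ and $y_{r_2}$, which are themselves recorded in $C_{p,-}$. The initial segment from $y_0$ to the first lower corner and the terminal segment from the last lower corner to $y_{N+1}=N+1-i+k$ are reconstructed by the same rule from the fixed boundary values; if $C_{p,-}=\emptyset$ the whole path is globally one ascent followed by one descent, still forced by $y_0$ and $y_{N+1}$ alone.

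Type B proceeds by the same argument. For $\scr P_{N,k}$ the steps have size $\pm 2$ except the last, which is $\pm(1+\eps)$, but the combinatorics is identical: between consecutive lower corners the admissible sign patterns are still ascent-then-descent, and the unique intermediate upper corner location is pinned by the endpoint heights (with the convention that a corner at the final point $(2N-1,y_N)$ is detected via the $\eps$-offset clauses in the definition of $C_{p,\pm}$). For $\scr P_{i,k}$ with $i<N$ I would invoke the decomposition (\ref{bvectpathdef1}) to split $p$ into halves $a\in\scr P_{N,k-(2N-2i-1)}$ and $b\in\scr P_{N,k+(2N-2i-1)}$; by inspection $C_{p,-}=C_{a,-}\sqcup C_{b,-}$, so each half is reconstructed by the $\scr P_{N,\cdot}$ case, and the gluing condition $a_N-\bar a_N=(0,y)$ with $y>0$ forces the join uniquely.

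I do not expect a real obstacle, since the content is pure combinatorial reconstruction. The only bookkeeping to watch is the $\eps$-offset final step in type B and the extremal segments (either no lower corner at all, or a lower corner adjacent to an endpoint); in all such cases the reconstruction is pinned by the starting and ending heights fixed in \S\ref{sec:highestpath}.
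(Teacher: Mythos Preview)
The paper itself offers no argument beyond the phrase ``proved by inspection'', so your write-up is necessarily more detailed than the original. Your type A reconstruction is correct in structure, though you have the convention reversed: a lower corner at $r$ satisfies $y_{r-1}=y_r-1=y_{r+1}$, i.e.\ $s_{r-1}=+1$, $s_r=-1$ (a local \emph{maximum} of $y$; recall the figures are drawn with the $y$-axis inverted), not the $-\to+$ transition you describe. The reconstruction argument is symmetric under this swap, so the slip is harmless, but between consecutive lower corners the pattern is down-then-up rather than up-then-down.

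There is, however, a genuine gap in your type B argument for $i<N$: the claimed identity $C_{p,-}=C_{a,-}\sqcup C_{b,-}$ is false. A column-$(2N-1)$ lower corner of $a$ is removed from $C_{p,-}$ whenever $\bar a$ contributes the point $(2N-1,\ell-\eps)$ at the same $\ell$. Concretely, in $B_2$ with $(i,k)=(1,0)$, take $a=\big((0,2),(2,4),(3,5+\eps)\big)$ and $\bar a=\big((6,4),(4,6),(3,5-\eps)\big)$: then $(2,5)\in C_{a,-}$ but $(2,5)\notin C_{p,-}$, because both $(3,5+\eps)$ and $(3,5-\eps)$ lie on $p$. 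So you cannot first reconstruct $a$ and $\bar a$ independently from $C_{p,-}$ and only afterwards invoke the gluing condition. A repair is to run your reconstruction directly on the concatenated sequence $(a_0,\dots,a_N,\bar a_N,\dots,\bar a_0)$, treating the join $a_N\to\bar a_N$ as one further forced downward step; the non-column-$(2N-1)$ lower corners then fix both halves up to the signs of their final $(1+\eps)$-steps, and those remaining signs are disambiguated by the column-$(2N-1)$ part of $C_{p,-}$ together with the inequality $y^{(a)}_N>\bar y_N$.
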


\begin{lem}\label{wbr} Suppose $p$ and $p'$ are paths in $\scr P_{i,k}$ such that $p$ is weakly below $p'$. If $p\neq p'$ then there is a $(j,\ell)\in \Iw$ such that $p$ can be raised at $(j,\ell)$ and $p \scr A_{j,\ell}$ is weakly below $p'$. 
\qed\end{lem}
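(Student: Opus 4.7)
The plan is to pick the ``deepest'' index of $p$ relative to $p'$ and perform a raising move at the lower corner located there. For each shared $x$-coordinate $x_r$ of the two paths write $y_r, y'_r$ for the $y$-coordinates of $p, p'$ there and set $d_r := y_r - y'_r$. Since $p, p' \in \scr P_{i,k}$ share endpoints, $d_r = 0$ at the boundary; the weakly-below hypothesis gives $d_r \geq 0$ throughout, and $p \neq p'$ forces $d_r > 0$ for some $r$. I pick $r^*$ with $d_{r^*} > 0$ that maximises $y_{r^*}$; with the convention of the figures, in which larger $y$ means lower on the page, this identifies the deepest point of $p$ inside the region where $p$ lies strictly below $p'$.

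The first step is to check that $r^*$ is a lower corner of $p$ at which a raising move is defined. For each neighbour $r \in \{r^* \pm 1\}$: if $d_r > 0$ then $y_r \leq y_{r^*}$ by the choice of $r^*$; if $d_r = 0$ then $y_r = y'_r$, and combining this with the allowed step sizes for $y$ and $y'$ (both $\pm 1$ in type A; $\pm 2$ between non-spin indices in type B; $\pm(1+\eps)$ at the spin step in type B) together with the strict inequality $y'_{r^*} < y_{r^*}$ rules out $y_r = y_{r^*} + s$ and forces $y_r = y_{r^*} - s$, where $s$ is the local step size. This identifies a raising move at some $(j,\ell) \in \Iw$. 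The second step is to verify that $d_{r^*}$ is at least the amount $2s$ by which this move lifts $y_{r^*}$: since $d_0 = 0$ and each increment $d_{r+1} - d_r$ lies in $\{-2s, 0, 2s\}$ within a region of constant step size, the value $d_{r^*}$ is a strictly positive even multiple of $s$ and hence $\geq 2s$. The raising changes only the coordinate at $r^*$, lowering it by $2s$, so $p \scr A_{j,\ell}$ remains weakly below $p'$.

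The main subtlety arises in type B around the spin coordinate, where the step size changes. When $r^* = N-1$ and the raising is of type ii in \S\ref{lowdef}, it simultaneously lowers $y_{N-1}$ by $4$ and $y_N$ by $2-2\eps$; one checks $d_{N-1} \geq 4$ automatically, and $d_N \geq 2-2\eps$ from $d_{N-1} \geq 4$ together with the allowed transition $d_N - d_{N-1} \in \{0, \pm(2+2\eps)\}$. When $r^* = N$ and the raising is of type iii, a short parity argument using $y_{N-1} < y_N$ rules out the transition $d_N - d_{N-1} = -(2+2\eps)$ and yields $d_N \geq 2+2\eps$. For $i < N$, a path in $\scr P_{i,k}$ is a pair $(a,b)$ of spin-type paths sharing the spin coordinate, and the argument is applied to whichever subpath contains $r^*$; the main loose end is to verify that the pair condition $a_N - \bar a_N > 0$ is preserved by the raising, which is handled by a case analysis similar to the spin checks above.
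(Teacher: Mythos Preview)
The paper records this lemma with a terminal \qed and the sentence immediately preceding it reads ``The three following lemmas are proved by inspection.'' So there is no argument in the paper to compare against; your proposal is a correct fleshed-out proof of what the authors regard as routine. Your choice of $r^*$ --- an index with $d_{r^*}>0$ at which $y_{r^*}$ is maximal --- is exactly the right one: it forces both neighbours to sit strictly above in $y$ (your two cases $d_r>0$ and $d_r=0$ are handled correctly), so $r^*$ is a genuine lower corner, and the parity observation $d_{r^*}\in 2s\Z_{>0}$ shows the raising preserves ``weakly below''. The type A case and the spin case $\scr P_{N,\ell}$ go through as written.

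The one place deserving a little more than you give it is the acknowledged loose end for $i<N$ in type B. When $r^*=N-1$ and the raising is of type ii on the half-path $a$, you correctly deduce new $y_{N-1}\geq y'_{N-1}$ and new $y_N\geq y'_N$, but you also need the pair constraint $y_N>y_{N+1}$ to survive the move, and this does not follow from ``weakly below $p'$'' alone (since $y_{N+1}\geq y'_{N+1}$ goes the wrong way). The cleanest way to close this is to recall that a raising move is \emph{by definition} the inverse of a lowering move on some $p''\in\scr P_{i,k}$, so the pair constraint holds for $p''=p\scr A_{j,\ell}$ automatically; what actually has to be checked is that such a $p''$ exists, i.e.\ that the second raising condition $(j,\ell-r_j)\notin C_{p,-}$ is met. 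Your choice of $r^*$ guarantees this, because any lower corner of $p$ at $(j,\ell-r_j)$ would sit at $y$-value $\ell$, which is strictly above the lower corner at $\ell+2r_j$ that you located; hence it cannot occur on the same half-path, and a short parity check using $\iota$ shows it cannot occur on the other half-path either. This is the ``case analysis similar to the spin checks'' you allude to, and it is indeed straightforward --- well within the scope of the paper's ``by inspection''.
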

\begin{lem} For all $p,p'\in \scr P_{i,k}$, $\topp(p,p')\in \scr P_{i,k}$ and $\topp(p,p')$ is weakly above both $p$ and $p'$.
\qed\end{lem}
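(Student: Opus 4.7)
The plan is to verify the two assertions separately. That $\topp(p,p')$ is weakly above both $p$ and $p'$ is immediate from the definition: if $p=(x_r,y_r)$, $p'=(x_r,y'_r)$ and $q=\topp(p,p')=(x_r,\min(y_r,y'_r))$, then by construction the $y$-coordinates of $q$ are $\leq$ those of $p$ and $\leq$ those of $p'$, which is exactly the "weakly above" condition as defined (recall smaller $y$ means higher). The substantive claim is therefore $\topp(p,p')\in\scr P_{i,k}$. The endpoints are automatic, since in each of the three cases of the definition every path in $\scr P_{i,k}$ has the same first and last point; so the issue is to check that the consecutive differences of $q$ lie in the permitted set.

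The key observation is a parity/spacing constraint. In type A the steps are $\pm1$ and the two paths share $y_0$, so $y_r-y'_r$ is always even; hence whenever $y_r\neq y'_r$ one has $|y_r-y'_r|\geq 2$. It follows that if $y_r<y'_r$ then $y_{r+1}\leq y_r+1\leq y'_r-1\leq y'_{r+1}$, so the "winner" of $\min$ cannot switch while the paths are apart. A switch can only occur at a position where $y_r=y'_r$; in that case the two paths diverge as $y_{r+1}=y_r\pm1$ and $y'_{r+1}=y_r\mp1$, and the new minimum is $y_r-1$, giving a valid step of $-1$ for $q$. Thus $q\in\scr P_{i,k}$.

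In type B the argument proceeds in the same spirit but one must handle three regimes. For interior steps of size $\pm 2$ in a path in $\scr P_{N,k}$, adjacent pairs of step-choices contribute increments of $0$ or $\pm 4$ to $y_r-y'_r$, so $y_r-y'_r\in 4\Z$. Hence the paths, when unequal, differ by at least $4$, and the same monotonicity argument shows the winner of $\min$ can only switch at equality, where the min step is again valid. The final step of size $\pm(1+\eps)$ from $y_{N-1}$ to $y_N$ is treated identically: a switch forces $y_{N-1}=y'_{N-1}$, after which the min step is $-1-\eps$, again valid. Finally for $i<N$ a path decomposes as $(a,\bar a)$ with $a\in\scr P_{N,k-(2N-2i-1)}$ and $\bar a\in\scr P_{N,k+(2N-2i-1)}$, and $\topp$ acts componentwise. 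The condition $a_N-\bar a_N=(0,y)$ with $y>0$ is preserved by $\topp$ since if $u>u'$ and $v>v'$ then $\min(u,v)>\min(u',v')$, applied to the $y$-coordinates of $a_N,a'_N$ versus $\bar a_N,\bar a'_N$.

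The only real obstacle is bookkeeping in type B, in particular checking the step rule at the central column $x=2N-1$ where the two halves meet and the "spin-type" step of $\pm(1+\eps)$ occurs; everything else is a direct consequence of the mod-$2$ (type A) or mod-$4$ (type B interior) parity of the coordinate differences.
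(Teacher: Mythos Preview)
Your argument is correct. The paper itself gives no proof beyond ``by inspection'' (the lemma is listed among three that are proved by inspection and closed with \qed), so there is nothing to compare against; you have simply supplied the details the authors omitted. Your parity observation---that $y_r-y'_r$ lies in $2\Z$ in type A and in $4\Z$ for the interior steps in type B, so the ``winner'' of the minimum can only change at a point of equality, where the resulting step is automatically the downward one---is exactly the content of the inspection. The handling of the junction condition for $i<N$ in type B via $\min(u,v)>\min(u',v')$ whenever $u>u'$ and $v>v'$ is also correct and is the only part that needs any care.
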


Now we have
\begin{lem}\label{movelemmaA} Let $p$ and $p'$ be paths in $\scr P_{i,k}$. Then
$p$ can be obtained from $p'$ by a sequence of moves containing no inverse pair of raising/lowering moves. 
\end{lem}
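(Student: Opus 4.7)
The plan is to use $q := \topp(p,p')$ as a bridge. By the preceding lemma, $q \in \scr P_{i,k}$ and lies weakly above both $p$ and $p'$. Applying Lemma \ref{wbr} iteratively---which terminates because each raising move strictly decreases a non-negative measure such as $\sum_r (y^p_r - y^q_r)$---produces a sequence $R$ of raising moves taking $p$ to $q$ through intermediate paths that are all weakly below $q$. The same construction yields a raising sequence $R'$ from $p'$ to $q$. Reversing $R'$ gives a sequence of lowering moves from $q$ to $p'$, so concatenating $R$ with the reversed $R'$ produces a sequence from $p$ to $p'$ in which all raising moves precede all lowering moves.

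An inverse pair in this concatenation can arise only if some point $(j,\ell) \in \Iw$ is used as a raising move by both $R$ and $R'$, so it suffices to show that the multisets of moves used by $R$ and by $R'$ are disjoint. By (the single-path case of) Lemma \ref{steplemma}, the product $\prod_r A_{j_r,\ell_r}$ of $A$'s along $R$ equals $\mon(q)/\mon(p)$, and likewise for $R'$; since the $A_{j,\ell}$ are algebraically independent, these multisets are intrinsically determined by $p, p', q$ and do not depend on the particular raising sequences chosen.

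I would then establish disjointness by an entry-by-entry analysis of the $y$-tuples. In type A, a raising move at $(j,\ell)$ alters only the entry $y_j$; the column-$j$ moves used by $R$ are precisely $(j,y^p_j - 1),(j,y^p_j-3),\dots,(j,y^q_j+1)$, effecting the decrements from $y^p_j$ down to $y^q_j$ in steps of two, and analogously for $R'$. Since $y^q_j = \min(y^p_j, y^{p'}_j)$, at each column at most one of $R, R'$ contributes any moves, and disjointness is immediate. The main technical obstacle is the type B case, where case (ii) raising moves at $(N-1,\ell)$ simultaneously shift the two entries $y_{N-1}$ and $y_N$, while case (iii) moves at $(N,\ell)$ shift only $y_N$. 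This is resolved by observing that case (ii) and case (iii) moves contribute $y_N$-decrements in distinct $\eps$-congruence classes ($\pm(2-2\eps)$ versus $\pm(2+2\eps)$), so the numbers of each type of move used by $R$ are separately determined by the differences $y^p - y^q$; applying the $\min$-coordinate property entry-wise in each congruence class again forces at most one of $R, R'$ to contribute moves at each of $(j,\ell)$ with $j \in \{N-1, N\}$. For $(i,k) \in \It$ with $i < N$, the path decomposes via \eqref{bvectpathdef1} as a pair of $\scr P_N$-paths, and the same analysis applies to each factor.
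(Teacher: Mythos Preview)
Your overall strategy is the same as the paper's: both route through $q=\topp(p,p')$, use Lemma~\ref{wbr} to build raising sequences $R$ from $p$ to $q$ and $R'$ from $p'$ to $q$, then reverse $R'$, so that the only thing to check is that $R$ and $R'$ share no move.

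Where you diverge is in the disjointness argument. The paper gives a uniform contradiction: if $(j,\ell)$ lay in both sequences, let $\tilde p,\tilde p'$ be the paths just before that move in each sequence; since each intermediate path stays weakly between the starting path and $q$, one has $\topp(\tilde p,\tilde p')=q$, but both $\tilde p$ and $\tilde p'$ have a lower corner at $(j,\ell+r_j)$, forcing $q$ to have one too---yet performing the raise at $(j,\ell)$ on $\tilde p$ lands weakly below $q$, which is impossible at that coordinate. This works verbatim in types A and B with no case split. Your argument instead enumerates the column-$j$ moves explicitly and uses $y^q_j=\min(y^p_j,y^{p'}_j)$ to see that only one of $R,R'$ can contribute in each column. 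This is correct (and your observation that $y^p_N-y^q_N=0$ forces both the $(N{-}1)$- and $N$-move counts in $R$ to vanish, since $a(2-2\eps)+b(2+2\eps)=0$ with $a,b\ge 0$ implies $a=b=0$, is the right way to handle the coupled type~B columns), but it costs you the type split and the half-path decomposition via \eqref{bvectpathdef1}. The paper's invariance-of-$\topp$ trick is the cleaner packaging of the same $\min$ idea.
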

\begin{proof}
By applying Lemma \ref{wbr} a finite number of times we construct a sequence $s$ of distinct points in $\Iw$ such that, starting with $p$, performing raising moves at these points, in order, yields $\topp(p,p')$. Similarly, we construct a sequence $s'$ of raising moves taking $p'$ to $\topp(p,p')$; by reversing this sequence, we have a sequence of lowering moves taking $\topp(p,p')$ to $p'$. It is enough to show that $s$ and $s'$ have no element in common. 

Suppose for a contradiction the point $(j,\ell)\in \Iw$ occurs in both $s$ and $s'$. Let $\tilde p$ be the path obtained by performing, on $p$, raising moves at the points in $s$ preceding $(j,\ell)$, in order. Similarly, let $\tilde p'$ be the path obtained by performing, on $p'$, raising moves at the points in $s'$ preceding $(j,\ell)$, in order. 
Then $\topp(p,p') = \topp(\tilde p,\tilde p')$. But both $\tilde p$ and $\tilde p'$ have a lower corner at $(j,\ell+r_j)$. Therefore $\topp(\tilde p,\tilde p')$ has a lower corner at $(j,\ell+r_j)$, while $\topp(p,p')$ does not: a contradiction. 
\end{proof}


\begin{lem}\label{movelemmaB} Let $(i_t,k_t)\in \It$, $1\leq t\leq T$, be a snake of length $T\in \Z_{\geq 1}$. Let $\ps$ and $\pps$ be tuples of non-overlapping paths in $\nops$. Then $\ps$ can be obtained from $\pps$ by a sequence of moves containing no inverse pair of raising/lowering moves,  such that no move introduces any overlaps.  
\end{lem}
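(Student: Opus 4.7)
The plan is to interpolate through the componentwise top tuple $\topp(\ps,\pps) := (\topp(p_t,p'_t))_{1\leq t\leq T}$. First I would show that $\topp(\ps,\pps)\in\nops$: for $s<t$ and any column $x$ common to both paths, the non-overlapping hypothesis gives $y_{p_s}(x)<y_{p_t}(x)$ and $y_{p'_s}(x)<y_{p'_t}(x)$, and these strict inequalities are preserved under componentwise minimum, since $\min(y_{p_s}(x),y_{p'_s}(x))\leq y_{p_s}(x)<y_{p_t}(x)$ and similarly $\leq y_{p'_s}(x)<y_{p'_t}(x)$. Note also that each $\topp(p_t,p'_t)$ is weakly above both $p_t$ and $p'_t$.

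Next I would build the move sequence in two phases. In the raising phase, process $t=1,2,\ldots,T$ in this order; when it is $t$'s turn the tuple has the form $(\topp(p_1,p'_1),\ldots,\topp(p_{t-1},p'_{t-1}),p_t,p_{t+1},\ldots,p_T)$, and I apply in the $t$-th slot the sequence of raising moves furnished by Lemma \ref{movelemmaA} that takes $p_t$ up to $\topp(p_t,p'_t)$. In the lowering phase, process $t=T,T-1,\ldots,1$; at step $t$ the tuple is $(\topp(p_1,p'_1),\ldots,\topp(p_t,p'_t),p'_{t+1},\ldots,p'_T)$, and I apply the reversal of the raising sequence that Lemma \ref{movelemmaA} supplies for the pair $(p'_t,\topp(p_t,p'_t))$, which consists purely of lowering moves. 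The no-inverse-pair condition then holds path by path, because for each fixed $t$ the moves on path $t$ occurring across the two phases are precisely the sequence Lemma \ref{movelemmaA} produces from $p_t$ and $p'_t$.

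The crucial check is that no intermediate tuple overlaps. When path $t$ is being raised in Phase 1, write $\tilde p_t$ for the current state: it is weakly above $p_t$, hence strictly above the still-unchanged $p_{t+1}$; and it is weakly below $\topp(p_t,p'_t)$, hence strictly below $\topp(p_{t-1},p'_{t-1})$ by the first paragraph. So the tuple stays in $\nops$ throughout the raising phase, and a symmetric argument (using that $\tilde p_t$ is weakly above $p'_t$ and weakly below $\topp(p_t,p'_t)$) handles the lowering phase. The main obstacle is precisely this non-overlapping bookkeeping, and the reason the chosen orderings (top-down in the raising phase, bottom-up in the lowering phase) succeed is that at each moment the path being modified needs to be compared only to paths already pushed to their extremal-upper $\topp$ positions on one side, or still sitting at the original/final position on the other, and both comparisons reduce immediately to the weak-above/weak-below facts above combined with the strict non-overlapping of $\topp(\ps,\pps)$, $\ps$, and $\pps$.
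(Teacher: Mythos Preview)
Your construction and your non-overlapping bookkeeping are exactly the paper's, and that part is fine. The gap is in the no-inverse-pair clause. A move on a tuple is indexed by a point $(j,\ell)\in\Iw$ and a sign, not by which path it happens to affect; an inverse pair means a raising at $(j,\ell)$ somewhere in the sequence together with a lowering at the \emph{same} $(j,\ell)$, regardless of whether these act on the same path. This global reading is what Lemma~\ref{movelemma} needs (via Lemma~\ref{steplemma} and the algebraic independence of the $A_{j,\ell}$). Your sentence ``the no-inverse-pair condition then holds path by path'' only invokes Lemma~\ref{movelemmaA} to rule out a raise at $(j,\ell)$ on path $t$ followed by a lower at $(j,\ell)$ on the \emph{same} path $t$; it says nothing about a raise at $(j,\ell)$ on path $t$ in Phase~1 and a lower at $(j,\ell)$ on path $u\neq t$ in Phase~2.

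This cross-path case is exactly what the paper's final paragraph rules out, and it requires a separate argument using the non-overlapping hypothesis on $\ps$ (or $\pps$): if $(j,\ell)$ occurs both among the raises taking $p_t$ to $\topp(p_t,p'_t)$ and among the raises taking $p'_u$ to $\topp(p_u,p'_u)$ with, say, $t<u$, then one deduces that $p_t$ passes through column $x$ (where $(x,y)=\iota(j,\ell+r_j)$) at height $\geq y$ while $p_u$ passes through column $x$ at height $<y$, contradicting $p_t$ strictly above $p_u$. You need to add this argument (or an equivalent one) to close the proof.
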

\begin{proof}
For each $t$, $1\leq t\leq T$, let $s_t$ (resp. $s'_t$) be the finite sequence of points at which we must perform raising moves on $p_t$ (resp. $p'_t$) to reach $\topp(p_t,p'_t)$, by repeated application of Lemma \ref{wbr}.  Then the following is a sequence of moves obeying the requirements of the lemma. We first perform raising moves (at the points in $s_1$, in order) on $p_1$ to reach $\topp(p_1,p_1')$, then on $p_2$ (at the points in $s_2$, in order) to reach $\topp(p_2,p_2')$ and so on until we raise $p_T$ to $\topp(p_T,p'_T)$. Then we perform lowering moves (at the points in $s_T'$, in reverse order) on $\topp(p_T,p'_T)$ to reach $p'_T$, then on $\topp(p_{T-1},p_{T-1}')$ (at the points in $s_{T-1}'$, in reverse order) to $p_{T-1}'$ and so on until we lower $\topp(p_1,p_1')$ to $p_1$. This ordering of the moves ensures that at no step do any two paths overlap. By Lemma \ref{movelemmaA}, $s_t\cap s_t'=\emptyset$ for all $t$, $1\leq t\leq T$. It remains to check that $s_t\cap s_u'=\emptyset$ for all distinct $1\leq t \neq u \leq T$. Suppose for a contradiction that $(j,\ell)\in s_t\cap s_u'$, for some $1\leq t\neq  u\leq T$. Without loss of generality suppose $t<u$. Let $(x,y):= \iota(j,\ell+r_j)$. Then $p'_u$ has a lower corner at $(j,\ell+r_j)$ and $\topp(p_u,p'_u)$ does not. So $\topp(p_u,p_u')$, and hence $p_u$, contains a point $(x,y')$, $y'<y$. But $p_t$ also has a lower corner at $(j,\ell+r_j)$, i.e. it contains the point $(x,y)$. So $p_u$ and $p_t$ overlap in column $x$: a contradiction since $p_t$ and $p_u$ are non-overlapping.   
\end{proof}

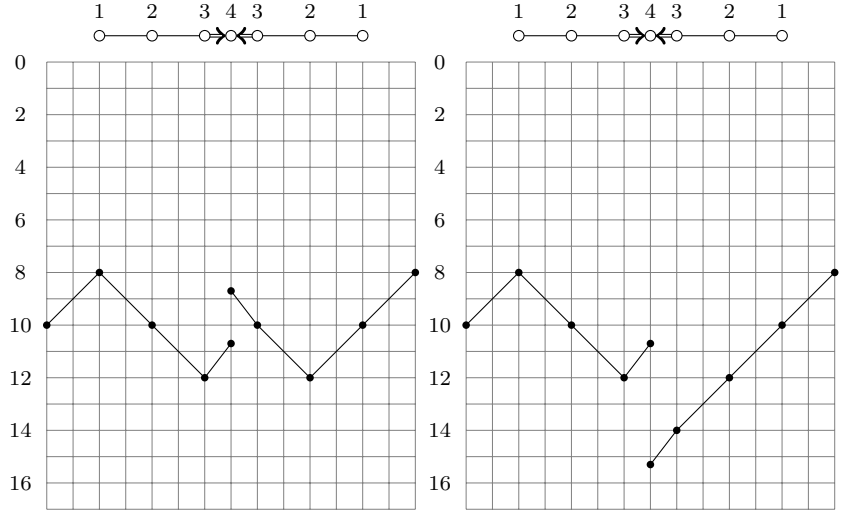
\begin{figure}
\caption{Illustration of the definition, \S\ref{overlapdef}, of overlapping paths in type $B_4$. By Theorem \ref{snakechar}, $\L (Y_{4,1} Y_{4,3})$ contains the monomial $(Y_{1,8} Y_{3,12}^{-1} Y_{4,11})(Y_{4,9}Y_{2,12}^{-1})$ (left) but \emph{not} the monomial  $(Y_{1,8} Y_{3,12}^{-1} Y_{4,11})(Y_{4,15}^{-1})$ (right). Note that the latter is disallowed despite the fact that no paths intersect, because the would-be upper path contains, in the column corresponding to the short node, a point below a point of the would-be lower path.\label{nocrossfigureB}}
\be\begin{tikzpicture}[scale=.35,yscale=-1]
\draw[help lines] (0,0) grid (14,17);
\draw (2,-1) -- (6,-1); \draw (8,-1) -- (12,-1);
\draw[double,->] (6,-1) -- (6.8,-1); \draw[double,->] (8,-1) -- (7.2,-1);
\filldraw[fill=white] (7,-1) circle (2mm) node[above=1mm] {$\scriptstyle 4$};
\foreach \x in {1,2,3} {
\filldraw[fill=white] (2*\x,-1) circle (2mm) node[above=1mm] {$\scriptstyle\x$}; 
\filldraw[fill=white] (2*7-2*\x,-1) circle (2mm) node[above=1mm] {$\scriptstyle\x$}; }
\foreach \y in {0,2,4,6,8,10,12,14,16} {\node at (-1,\y) {$\scriptstyle\y$};}
\begin{scope}[every node/.style={minimum size=.1cm,inner sep=0mm,fill,circle}]
\draw (7,8.7) node{} --(8,10) node{}  -- (10,12) node{} -- (12,10) node{}  -- (14,8) node{} ;
\draw (7,10.7) node{}  -- (6,12) node{} -- (4,10) node{}  -- (2,8) node{}  -- (0,10) node{} ;
\end{scope}
\end{tikzpicture}
\begin{tikzpicture}[scale=.35,yscale=-1]
\draw[help lines] (0,0) grid (14,17);
\draw (2,-1) -- (6,-1); \draw (8,-1) -- (12,-1);
\draw[double,->] (6,-1) -- (6.8,-1); \draw[double,->] (8,-1) -- (7.2,-1);
\filldraw[fill=white] (7,-1) circle (2mm) node[above=1mm] {$\scriptstyle 4$};
\foreach \x in {1,2,3} {
\filldraw[fill=white] (2*\x,-1) circle (2mm) node[above=1mm] {$\scriptstyle\x$}; 
\filldraw[fill=white] (2*7-2*\x,-1) circle (2mm) node[above=1mm] {$\scriptstyle\x$}; }
\foreach \y in {0,2,4,6,8,10,12,14,16} {\node at (-1,\y) {$\scriptstyle\y$};}
\begin{scope}[every node/.style={minimum size=.1cm,inner sep=0mm,fill,circle}]
\draw (7,15.3) node{}  -- (8,14) node{}  -- (10,12) node{}-- (12,10) node{}  -- (14,8) node{} ;
\draw (7,10.7) node{}  -- (6,12) node{} -- (4,10) node{}  -- (2,8) node{}  -- (0,10) node{} ;
\end{scope}
\end{tikzpicture}
\nn\ee
\end{figure}
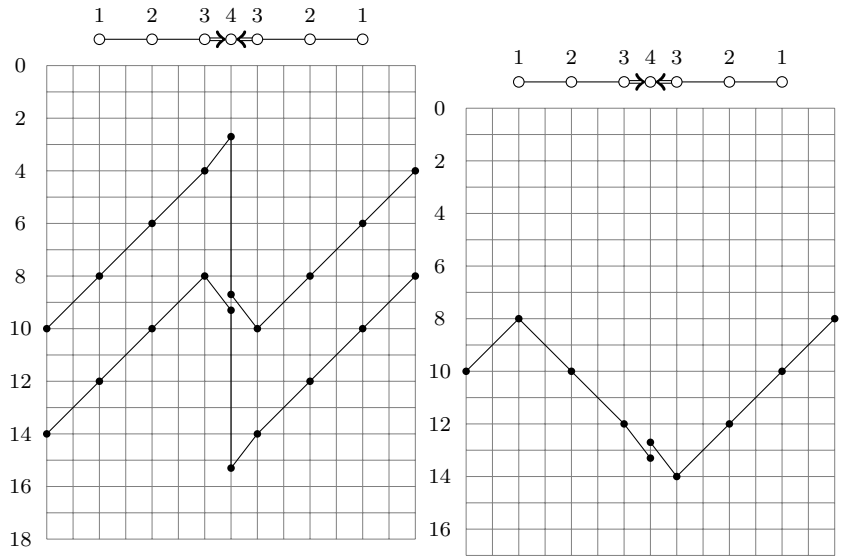
\begin{figure}
\caption{Illustration of Lemma \ref{disjcorners}. By Theorem \ref{snakechar}: $\L (Y_{2,0} Y_{2,4})$ contains the monomial $(Y_{4,3}Y_{4,9}Y_{3,10}^{-1})(Y_{3,8} Y_{4,9}^{-1}Y_{4,15}^{-1})=Y_{4,3}Y_{3,10}^{-1}Y_{3,8}Y_{4,15}^{-1}$ (left) and $\L (Y_{4,1} Y_{4,3})$ contains the monomial $(Y_{3,14}^{-1}Y_{4,13})(Y_{1,8}Y_{4,13}^{-1})=Y_{1,8}Y_{3,14}^{-1}$ (right).
In such cases it is possible for a factor $Y^{-1}$ from a given path cancel a factor $Y$ in a path strictly above. But in no case can a $Y^{-1}$ from a given path cancel a $Y$ from a path strictly below. 
\label{nocrossfigureBprime}}
\be\nn\begin{tikzpicture}[scale=.35,yscale=-1]
\draw[help lines] (0,0) grid (14,18);
\draw (2,-1) -- (6,-1); \draw (8,-1) -- (12,-1);
\draw[double,->] (6,-1) -- (6.8,-1); \draw[double,->] (8,-1) -- (7.2,-1);
\filldraw[fill=white] (7,-1) circle (2mm) node[above=1mm] {$\scriptstyle 4$};
\foreach \x in {1,2,3} {
\filldraw[fill=white] (2*\x,-1) circle (2mm) node[above=1mm] {$\scriptstyle\x$}; 
\filldraw[fill=white] (2*7-2*\x,-1) circle (2mm) node[above=1mm] {$\scriptstyle\x$}; }
\foreach \y in {0,2,4,6,8,10,12,14,16,18} {\node at (-1,\y) {$\scriptstyle\y$};}

\begin{scope}[every node/.style={minimum size=.1cm,inner sep=0mm,fill,circle}]
\draw (14,4) node{}  -- (12,6) node{}  -- (10,8) node{}  -- (8,10) node{}  -- (7,8.7) node{}  --(7,2.7) node{}  -- (6,4) node{}  -- (4,6) node{}  -- (2,8) node{} -- (0,10) node{} ;
\draw (14,8) node{}  -- (12,10) node{}  -- (10,12) node{}  -- (8,14) node{}  -- (7,15.3) node{}  --(7,9.3) node{}  -- (6,8) node{}  -- (4,10) node{}  -- (2,12) node{}  -- (0,14) node{} ;
\end{scope}

\end{tikzpicture}
\begin{tikzpicture}[scale=.35,yscale=-1]
\draw[help lines] (0,0) grid (14,17);
\draw (2,-1) -- (6,-1); \draw (8,-1) -- (12,-1);
\draw[double,->] (6,-1) -- (6.8,-1); \draw[double,->] (8,-1) -- (7.2,-1);
\filldraw[fill=white] (7,-1) circle (2mm) node[above=1mm] {$\scriptstyle 4$};
\foreach \x in {1,2,3} {
\filldraw[fill=white] (2*\x,-1) circle (2mm) node[above=1mm] {$\scriptstyle\x$}; 
\filldraw[fill=white] (2*7-2*\x,-1) circle (2mm) node[above=1mm] {$\scriptstyle\x$}; }
\foreach \y in {0,2,4,6,8,10,12,14,16} {\node at (-1,\y) {$\scriptstyle\y$};}

\begin{scope}[every node/.style={minimum size=.1cm,inner sep=0mm,fill,circle}]
\draw (7,12.7) node{}  -- (8,14) node{}  -- (10,12) node{}  -- (12,10) node{}  -- (14,8) node{} ;
\draw (7,13.3) node{}  -- (6,12) node{}  -- (4,10) node{}  -- (2,8) node{}  -- (0,10) node{} ;
\end{scope}

\end{tikzpicture}
\ee
\end{figure}


\begin{lem}\label{movelemma}
Let $(i_t,k_t)\in \It$, $1\leq t\leq T$, be a snake of length $T\in \Z_{\geq 1}$ and $(j_r,\ell_r)$, $1\leq r\leq R$,  a sequence of $R\in\Z_{\geq 0}$ points in $\Iw$. For all $\ps\in\nops$ and $\pps\in\nops$, the following are equivalent: 
\begin{enumerate}[(i)] 
\item $\prod_{t=1}^T \mon(p'_t) =  \prod_{t=1}^T \mon(p_t) \cdot \prod_{r=1}^R A_{j_r,\ell_r}^{-1}$ 
\item there is a permutation $\sigma \in S_R$ such that $\big( (j_{\sigma(1)},\ell_{\sigma(1)}),\dots, (j_{\sigma(R)},\ell_{\sigma(R)}) \big)$ is a sequence of lowering moves that can be performed on $\ps$, without ever introducing overlaps, to yield $\pps$. 
\end{enumerate}
\end{lem}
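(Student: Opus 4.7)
The plan is to prove each direction using the infrastructure already in place. The direction (ii)$\Rightarrow$(i) will be almost immediate: by Lemma \ref{steplemma}, each lowering move at $(j_{\sigma(r)},\ell_{\sigma(r)})$ multiplies the product $\prod_t \mon(p_t)$ by $A_{j_{\sigma(r)},\ell_{\sigma(r)}}^{-1}$, and since the $A$'s multiply commutatively, iterating over all $R$ moves yields $\prod_t \mon(p_t) \cdot \prod_{r=1}^R A_{j_r,\ell_r}^{-1}$, regardless of the order $\sigma$.

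For the nontrivial direction (i)$\Rightarrow$(ii), I will apply Lemma \ref{movelemmaB} to the pair $(\pps, \ps)$ to obtain a sequence of moves from $\pps$ to $\ps$ that contains no inverse raising/lowering pair and introduces no overlap at any intermediate step. Reversing this sequence produces a sequence of moves from $\ps$ to $\pps$ of the same form, with raising moves becoming lowering moves and vice versa. For each $(j,\ell)\in\Iw$, let $a_{j,\ell}$ and $b_{j,\ell}$ denote the numbers of raising and lowering moves, respectively, occurring at $(j,\ell)$ in this reversed sequence. The ``no inverse pair'' property gives $a_{j,\ell}\cdot b_{j,\ell}=0$ for every $(j,\ell)$, and by iterated application of Lemma \ref{steplemma} the net effect on monomials is multiplication by $\prod_{(j,\ell)} A_{j,\ell}^{a_{j,\ell}-b_{j,\ell}}$.

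The crucial step is then to combine hypothesis (i) with the algebraic independence of the $(A_{j,\ell})_{(j,\ell)\in I\times\Cx}$ (noted in the excerpt following the definition of $v$) to deduce
\[ a_{j,\ell}-b_{j,\ell} = -\left|\{r : (j_r,\ell_r)=(j,\ell)\}\right| \quad \text{for all } (j,\ell)\in \Iw. \]
Together with $a_{j,\ell}\cdot b_{j,\ell}=0$ this forces $a_{j,\ell}=0$ for every $(j,\ell)$, so the sequence consists entirely of lowering moves, and the multiset of locations at which those moves occur coincides with $\{(j_r,\ell_r):1\leq r\leq R\}$. Any permutation $\sigma\in S_R$ realizing this coincidence will then exhibit the sequence required by (ii).

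I do not anticipate any genuine obstacle here, since all the real work has been carried out in Lemmas \ref{steplemma} and \ref{movelemmaB}. The only moderately delicate point is extracting the permutation $\sigma$ from the reordering implicit in Lemma \ref{movelemmaB}, but this is a routine multiset-matching argument once algebraic independence has been invoked.
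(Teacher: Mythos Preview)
Your proposal is correct and follows essentially the same approach as the paper's proof: both directions rest on Lemma \ref{steplemma}, and for (i)$\Rightarrow$(ii) both invoke Lemma \ref{movelemmaB} to produce a move sequence with no inverse pair and no intermediate overlaps, then use the algebraic independence of the $(A_{j,\ell})_{(j,\ell)\in\Iw}$ to match the multiset of moves to $\{(j_r,\ell_r)\}$. Your explicit bookkeeping with the counts $a_{j,\ell},b_{j,\ell}$ and the reversal step are harmless elaborations of what the paper does in a single sentence; note that the reversal is not actually needed, since Lemma \ref{movelemmaB} applied directly to $\ps,\pps$ already yields a sequence from $\ps$ to $\pps$ (via $\pps$ in the role of the target) by symmetry.
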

\begin{proof}
That (ii) implies (i) follows from Lemma \ref{steplemma}. To see that (i) implies (ii), note that since $\ps\in\nops$ and $\pps\in\nops$, Lemma \ref{movelemmaB} states that there is a sequence of moves that takes $\ps$ to $\pps$ without introducing overlaps and without ever performing a move and its inverse. By Lemma \ref{steplemma} and the fact that the $(A_{j,\ell})_{(j,\ell)\in \Iw}$ are algebraically independent, these moves must indeed be lowering moves at the points $(j_r,\ell_r)_{1\leq r\leq R}$ arranged in some order. 
\end{proof}
\begin{cor}\label{injmap} Let $(i_t,k_t)\in \It$, $1\leq t\leq T$, be a snake of length $T\in \Z_{\geq 1}$. The map
\be\nn\nops \to  \Z[Y_{j,\ell}^{\pm 1}]_{(j,\ell)\in \It};\qquad \ps \mapsto \prod_{t=1}^T \mon(p_t)\ee
is injective.
\end{cor}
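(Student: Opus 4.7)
The corollary is essentially a repackaging of Lemma \ref{movelemma}, so my plan is to apply that lemma with the empty sequence of moves. Concretely, suppose $\ps, \pps \in \nops$ satisfy
\[
\prod_{t=1}^T \mon(p_t) = \prod_{t=1}^T \mon(p'_t).
\]
To prove $\ps = \pps$, I would invoke Lemma \ref{movelemma} with $R = 0$, so that the sequence of points $(j_r,\ell_r)_{1 \le r \le R}$ is empty and the product $\prod_{r=1}^R A_{j_r,\ell_r}^{-1}$ equals $1$. Then condition (i) of Lemma \ref{movelemma} reduces precisely to the hypothesis $\prod_{t=1}^T \mon(p'_t) = \prod_{t=1}^T \mon(p_t)$, which holds. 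Therefore condition (ii) must also hold, meaning that the empty sequence of lowering moves carries $\ps$ to $\pps$, i.e. $\ps = \pps$.

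There is essentially no obstacle here beyond checking that the $R = 0$ case of Lemma \ref{movelemma} is meaningful: the statement of the lemma allows $R \in \Z_{\ge 0}$, and with $R = 0$ the permutation $\sigma \in S_0$ is trivial and condition (ii) asserts that doing nothing to $\ps$ yields $\pps$. Alternatively, for a completely self-contained argument, one could repeat the short deduction from within the proof of Lemma \ref{movelemma}: by Lemma \ref{movelemmaB} there is a sequence of moves (without inverse pairs, and introducing no overlaps) taking $\ps$ to $\pps$; by Lemma \ref{steplemma} each such move multiplies the monomial product by $A_{j,\ell}^{\pm 1}$ for some $(j,\ell)\in\Iw$; and since the $(A_{j,\ell})_{(j,\ell)\in\Iw}$ are algebraically independent, the equality of monomial products forces the sequence to be empty. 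Either route gives $\ps = \pps$, establishing injectivity.
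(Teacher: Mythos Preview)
Your proposal is correct and matches the paper's own proof exactly: the paper's proof consists of the single sentence ``This is the case $R=0$ in Lemma \ref{movelemma}.'' Your alternative unpacking via Lemmas \ref{movelemmaB} and \ref{steplemma} is also fine, as it simply reproduces the relevant part of the proof of Lemma \ref{movelemma}.
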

\begin{proof}
This is the case $R=0$ in Lemma \ref{movelemma}.
\end{proof}

\begin{lem}\label{updownlem} 
Let $(i_t,k_t)\in \It$, $1\leq t\leq T$, be a snake of length $T\in \Z_{\geq 1}$ and $\ps\in\nops$. Let $m= \prod_{t=1}^T \mon(p_t)$. If $mA_{i,k}^{-1}$ not of the form $\prod_{t=1}^T\mon(p'_t)$ for any $\pps\in\nops$, then neither is $mA_{i,k}^{-1}A_{j,\ell}$ unless $(j,l)=(i,k)$.
\end{lem}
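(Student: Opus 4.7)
The plan is to prove the contrapositive. Suppose some $\pps\in\nops$ realises $mA_{i,k}^{-1}A_{j,\ell}$ with $(j,\ell)\ne(i,k)$; I will exhibit a tuple in $\nops$ realising $mA_{i,k}^{-1}$.

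Applying Lemma~\ref{movelemmaB} to the pair $(\pps,\ps)$, I obtain a sequence of moves from $\pps$ to $\ps$ containing no inverse pair and introducing no overlap. By Lemma~\ref{steplemma} the net product of the $A^{\pm 1}$'s along the sequence is $A_{i,k}A_{j,\ell}^{-1}$; algebraic independence of the $A_{j',\ell'}$ together with the hypothesis $(j,\ell)\ne(i,k)$ forces the sequence to consist of exactly two moves: a raising at $(i,k)$ and a lowering at $(j,\ell)$, in some order. If the order is (lower at $(j,\ell)$, raise at $(i,k)$), then the intermediate tuple $\pps\mathcal A_{j,\ell}^{-1}$ is already non-overlapping and realises $mA_{i,k}^{-1}$, and we are done.

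Otherwise the order is (raise, lower). Let $a$ and $b$ index the raised and lowered paths respectively. If $a\ne b$, the moves act on disjoint paths and commute as operations on the tuple, so $\pps$ can itself be lowered at $(j,\ell)$ via path $b$. If $a=b$, I claim the hypothesis $(j,\ell)\ne(i,k)$ still forces the two moves to commute on the single path $p'_a$: the two possible obstructions to lowering $p'_a$ at $(j,\ell)$ directly are that the raising creates a new upper corner at $(j,\ell-r_j)$, or that it removes an upper corner at $(j,\ell+r_j)$. The first would require $(j,\ell-r_j)$ to be a positive position of $A_{i,k}$, but the raising precondition — that $p'_a$ has a lower corner at $(i,k+r_i)$ — then forces $(j,\ell)=(i,k)$. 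The second would require $(j,\ell+r_j)$ to be a Dynkin-neighbour position of $(i,k)$, which combined with the impossibility of the first case forces $p'_a$ to have upper corners at \emph{both} $(j,\ell-r_j)$ and $(j,\ell+r_j)$; this contradicts single-valuedness of the $y$-coordinate at position $j$ in type A, and a parallel check of the paired structure of $p'_a\in\scr P_{i_a,k_a}$ (for $i_a<N$) rules it out in type B. Hence $p'_a$ admits the direct lowering at $(j,\ell)$.

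In both sub-cases $\pps\mathcal A_{j,\ell}^{-1}$ is well-defined, and its non-overlap follows from a strictly-above comparison: pairs of paths not involving the lowered one are unchanged from $\pps$; strict-above with higher-indexed paths is preserved because lowering only increases the relevant $y$-coordinates; and strict-above with lower-indexed paths transfers from the non-overlap of $\ps$, using that raising at $(i,k)$ changes the $y$-coordinate only at position $i$ in type A (and a bounded region around $(i,k)$ disjoint from the position at which lowering acts in type B, once $(j,\ell)\ne(i,k)$). The principal obstacle is the type B portion of the sub-case $a=b$, where the paired structure of paths in $\scr P_{i,k}$ for $i<N$ admits more corner configurations than its type A counterpart and requires a separate verification at each geometric position under the map $\iota$; the conclusion is however uniform, driven throughout by the exclusion $(j,\ell)\ne(i,k)$.
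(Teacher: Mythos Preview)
Your approach is essentially the paper's, run in the reverse direction: you apply Lemma~\ref{movelemmaB} to the pair $(\pps,\ps)$ rather than $(\ps,\pps)$, reduce to a two-move sequence, and split on the order. Where the paper eliminates one order directly from the hypothesis and then forces $s=t$ in the other, you instead read off the intermediate $mA_{i,k}^{-1}$ from one order and try to \emph{commute} the moves in the other. Both routes end with the same single-path inspection.

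There is, however, a genuine gap in your final paragraph: the non-overlap argument for the commuted tuple is stated with the wrong reasons. Saying ``strict-above with higher-indexed paths is preserved because lowering only increases the relevant $y$-coordinates'' is backwards --- increasing the $y$-coordinate of $p'_b$ is exactly what threatens strict-above with paths $p'_c$, $c>b$. Likewise, for lower-indexed paths you try to transfer from $\ps$ via ``raising changes $y$ only at position $i$''; but raising \emph{decreases} $y$, so knowing that $p'_a$-raised is strictly above $p'_b$-lowered tells you nothing about $p'_a$ itself. The correct (and shorter) argument is: for $c<b$ compare in $\pps$ (lowering only increases $y$, so $p'_c$ strictly above $p'_b$ implies $p'_c$ strictly above $p'_b$ lowered); for $c>b$ use Lemma~\ref{firstXlem}, noting that the corner conditions on $p'_c$ that would trigger an overlap are identical whether or not $p'_a$ has first been raised (raising $p'_a$ does not change $p'_c$), so the assumed overlap-free sequence already rules them out. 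The same Lemma~\ref{firstXlem} observation handles the $a=b$ case cleanly. With these corrections your argument goes through; the paper's direction is a little cleaner precisely because it avoids having to verify non-overlap of a newly constructed intermediate.
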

\begin{proof} 
By Lemma \ref{movelemma}, if $mA_{i,k}^{-1}A_{j,\ell}=\prod_{t=1}^T \mon(p_t')$ for some $\pps\in\nops$ then $\pps$ is can be obtained from $\ps$, without ever introducing overlaps, by either
\begin{enumerate}[(i)]
\item lowering at $(i,k)$ and then raising at $(j,\ell)$, or 
\item raising at $(j,\ell)$ and then lowering at $(i,k)$.
\end{enumerate}
We assume that $mA_{i,k}^{-1}$ is not of the form $\prod_{t=1}^T\mon(p'_t)$ for any $\pps\in\nops$. So, by Lemma \ref{steplemma}, either (a) it is not possible to lower $\ps$ at $(i,k)$, or else (b) this is a valid lowering move but one which introduces an overlap. Hence (i) is impossible. Now suppose (ii) is possible: suppose the raising move at $(j,\ell)$ is on $p_s$ and the lowering move at $(i,k)$ is on $p_t$, $1\leq s,t\leq T$. If $s\neq t$ then this requires that $p_t$ can be lowered at $(i,k)$, so we must be in case (b): i.e. there is an $r$ such that when $p_t$ is lowered at $(i,k)$ it overlaps with $p_r$. If $s\notin\{t,r\}$ then, after the raising move at $(j,\ell)$ on $p_s$, it is still true that when $p_t$ is lowered at $(i,k)$ it overlaps with $p_r$. If $s=r$ then note that $p_t$ lowered at $(i,k)$ overlaps with $p_r$ raised at $(j,\ell)$. 
Therefore in fact $s=t$, i.e. both moves must be on the same path. 
%
One then sees, on inspection, that it is necessary that $(j,\ell) = (i,k)$, as required. 
\end{proof}

\begin{lem}\label{inthinsimplelem} 
Let $(i_t,k_t)\in \It$, $1\leq t\leq T$, be a snake of length $T\in \Z_{\geq 1}$ and $\ps\in\nops$. Pick and fix an $i\in I$.
Let $\overline{\scr P} \subset \nops$ be the set of those non-overlapping tuples of paths that can be obtained from $\ps$ by performing a sequence of raising or lowering moves at points of the form $(i,\ell)\in\Iw$. Then  $\beta_i(\prod_{t=1}^T \mon(p_t))$ is the $q$-character of a thin simple  finite-dimensional $\uqslt$-module.
\end{lem}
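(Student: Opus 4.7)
The plan is to identify the sum $\sum_{\pps\in\overline{\scr P}}\beta_i\!\bigl(\prod_{t=1}^{T}\mon(p'_t)\bigr)$ with $\chi_q(\L(\beta_i(M^{*})))$, where $M^{*}:=\prod_t\mon(p_t^{\max})$ for a distinguished $i$-dominant element $(p_1^{\max},\dots,p_T^{\max})\in\overline{\scr P}$. By Lemma~\ref{thinsimplesl2lem} and Corollary~\ref{tscor} this is precisely what is meant by a thin simple $\uqsl{i}$-module $q$-character. The proof splits into three parts: (i)~verify that each $\beta_i\!\bigl(\prod_t\mon(p'_t)\bigr)$ satisfies the thin-simple criterion of Lemma~\ref{thinsimplesl2lem}; (ii)~construct $(p_t^{\max})$ and show it is unique; (iii)~establish that $\pps\mapsto\beta_i\!\bigl(\prod_t\mon(p'_t)\bigr)$ is a bijection from $\overline{\scr P}$ onto $\mchiq(\L(\beta_i(M^{*})))$.

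For step~(i), apply Lemma~\ref{disjcorners}: distinct paths in a non-overlapping tuple share no $i$-corner, except in the type-B case $i=N$ where an upper $N$-corner of $p_t$ may cancel with a lower $N$-corner of $p_{t\pm 1}$. Consequently $|u_{i,\ell}(\prod_t\mon(p'_t))|\leq 1$. To rule out the forbidden pattern $u_{i,\ell}=1,\ u_{i,\ell+2r_i}=-1$, suppose for contradiction that some $p_t$ has an uncancelled upper corner at $(i,\ell)$ and some $p_s$ an uncancelled lower corner at $(i,\ell+2r_i)$; an inspection of the local shape of corners (using the map $\iota$ in type B) forces $p_t$ and $p_s$ to share a point in a column adjacent to the image of $i$, contradicting strict non-overlap.

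For step~(ii), starting from $\ps$, repeatedly perform raising moves at $(i,\ast)$-points: by Lemma~\ref{firstXlem} each such raise remains inside $\overline{\scr P}$, since an overlap-introducing raise at $(i,\ell)$ would require a pre-existing earlier path with either a lower $i$-corner at $(i,\ell-r_i)$ or an upper $i$-corner at $(i,\ell+r_i)$, which combined with the corner being raised would already contradict non-overlap. The process terminates when no lower $i$-corner survives in any path, yielding an $i$-dominant tuple; its uniqueness follows from Corollary~\ref{injmap} together with the uniqueness of the highest $l$-weight in the thin simple $\uqsl{i}$-module supplied by step~(i).

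The main obstacle is step~(iii). Injectivity is Corollary~\ref{injmap}, and the forward inclusion follows by induction on the number of path-moves using Lemma~\ref{steplemma}: each path lowering at $(i,\ell)$ multiplies $\beta_i(\prod_t\mon(p'_t))$ by $\beta_i(A_{i,\ell}^{-1})$, which implements the ball-and-box move $\bb\eb\to\eb\wb$ of \S\ref{boxpic}, and by step~(i) the result remains in $\mchiq(\L(\beta_i(M^{*})))$. The delicate surjectivity part is proved by induction on the number of ball-and-box lowerings separating a target monomial from $\beta_i(M^{*})$, and requires lifting each legal ball-and-box lowering at $(i,\ell)$ to a path lowering of the corresponding $\pps$ at $(i,\ell)$ that stays inside $\overline{\scr P}$. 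This is where Lemma~\ref{firstXlem} is decisive: in every case the corner configuration (an upper corner at $(i,\ell+r_i)$ or a lower corner at $(i,\ell-r_i)$ in some other path) that would cause a path lowering to introduce an overlap is exactly the configuration whose image under $\beta_i$ occupies the target ball-and-box site with a $\bb$ or leaves the source site with a $\wb$, thereby forbidding the analogous ball-and-box lowering. The matching has to be checked separately in type A and, with additional care, in type B near the short node $N$ where the exceptional cancellations permitted by Lemma~\ref{disjcorners} interact with the half-integer geometry of paths.
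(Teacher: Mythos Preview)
Your overall strategy matches the paper's: first verify the thin-simple criterion of Lemma~\ref{thinsimplesl2lem} for $\beta_i(m)$ (your step (i)), then establish that path moves at $(i,\ast)$ correspond exactly to legal ball-and-box moves in the associated thin simple $\uqsl i$-module (your step (iii)). The paper proceeds in the same order and with the same key lemmas (Lemmas~\ref{disjcorners}, \ref{firstXlem}, \ref{movelemma}).

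However, your step (ii) contains a genuine error. You claim that an overlap-introducing raise at $(i,\ell)$ on $p_t$ --- which by Lemma~\ref{firstXlem} requires some earlier $p_s$ to have a lower corner at $(i,\ell-r_i)$ or an upper corner at $(i,\ell+r_i)$ --- would already contradict non-overlap of $\ps$. This is false. For a type~A counterexample, take the snake $((2,1),(2,3))$ in $A_3$ and the non-overlapping pair $p_1=((0,3),(1,2),(2,3),(3,2),(4,3))$, $p_2=((0,5),(1,4),(2,5),(3,4),(4,5))$. Both have lower $2$-corners, at $(2,3)$ and $(2,5)$ respectively; raising $p_2$ at $(2,4)$ is a legal path move but introduces an overlap, yet $(p_1,p_2)$ is non-overlapping to begin with. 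The type-B short-node situation of Figure~\ref{nocrossfigureBprime} gives further counterexamples. The point is that such raises are precisely the ones forbidden in the ball-and-box picture (here $u_{2,3}=u_{2,5}=-1$, i.e.\ two adjacent white balls), so one should never attempt them. Your termination condition ``no lower $i$-corner survives in any path'' is likewise too strong: in type B a lower $N$-corner may persist, cancelled against an upper $N$-corner of an adjacent path, while the monomial is nevertheless $i$-dominant.

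The paper sidesteps this by not constructing the $i$-dominant tuple directly. Having fixed the thin simple $\uqsl i$-module $V$ via step (i) and Lemma~\ref{thinsimplesl2lem}, it shows (using Lemma~\ref{firstXlem} together with Lemma~\ref{movelemma}) that a path lowering at $(i,\ell)$ stays in $\overline{\scr P}$ \emph{if and only if} $u_{i,\ell-r_i}(m)=1$ and $u_{i,\ell+r_i}(m)=0$, which are exactly the conditions for $\beta_i(mA_{i,\ell}^{-1})\in\mchiq(V)$. This equivalence (and its raising analogue) \emph{is} the bijection; the existence and uniqueness of the $i$-dominant element of $\overline{\scr P}$ then follows for free from the uniqueness of the highest weight of $V$. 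Your step (iii) already captures this matching, so if you drop step (ii) and instead deduce the $i$-dominant tuple from the bijection, the argument goes through.
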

\begin{proof}
Let $m:=\prod_{t=1}^T \mon(p_t)$. The monomial $\beta_i(m)$ satisfies the conditions given in Lemma \ref{thinsimplesl2lem} to be part of a thin simple finite-dimensional $\uqsl i$-module, let us call it $V$: the condition $|u_{i,\ell}(m)| \leq 1$ is by Lemma \ref{disjcorners}; the condition $u_{i,\ell-r_i}(m) - u_{i,\ell+r_i}(m) \neq 2$ holds because there cannot be an upper corner at $(i,\ell-r_i)$ and a lower corner at $(i,\ell+r_i)$ without paths overlapping. (So we have that, in the language of the ball-and-box model of \S\ref{boxpic},  the pattern $\bb\wb$ never occurs.) 

Now we argue that the elements of $\overline{\scr P}$ are in bijection with the set $\mchiq(V)$ of monomials of $\chi_q(V)$.

By the definition of lowering moves and Lemma \ref{firstXlem}, it is possible to lower $\ps$ at $(i,\ell)$ if and only if 
\begin{enumerate}[(i)]
\item $(i,\ell-r_i)$ is an upper corner of some path in $\ps$ \emph{and} 
\item $(i,\ell-r_i)$ is not a lower corner of any path in $\ps$, \emph{and} 
\item $(i,\ell+r_i)$ is not an upper corner of any path in $\ps$.\end{enumerate}
Lemma \ref{movelemma} states that the only way to produce a factor $A_{i,\ell}^{-1}$ is to do the lowering move at $(i,\ell)$. 
Hence
\be mA_{i,\ell}^{-1}\in\sum_{\pps \in \overline{ \scr P}} \prod_{t=1}^T \mon(p'_t)\ee if and only if (i), (ii) and (iii) hold; that is, \confer (\ref{mondef}), if and only if $u_{i,\ell-r_i}(m) = 1$ and $u_{i,\ell+r_i}(m)=0$. As Lemma \ref{thinsimplesl2lem} states, these are precisely the conditions under which
\be \beta_i(mA_{i,\ell}^{-1}) \in \mchiq(V).\ee
Similar statements hold for raising moves.
As we noted following Lemma \ref{thinsimplesl2lem}, by a finite sequence of moves of this type we obtain every monomial of $\chi_q(V)$ and no others. We also, by definition, generate all the elements of $\overline{\scr P}$ and no other tuples of paths. 
\end{proof}
\begin{lem}\label{pathsspeciallem}
Let $(i_t,k_t)\in \It$, $1\leq t\leq T$, be a snake of length $T\in \Z_{\geq 1}$ and $\ps\in\nops$. 

If $\ps\neq\ptop$ then $\prod_{t=1}^T \mon(p_t)$ is not dominant. 

If $\ps\neq\pbot$ then $\prod_{t=1}^T \mon(p_t)$ is not anti-dominant.
\end{lem}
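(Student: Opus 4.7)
Throughout let $m:=\prod_{t=1}^T\mon(p_t)$; I prove the dominant assertion, the anti-dominant one being symmetric. Since $\phigh_{i,k}$ is by definition the unique path in $\scr P_{i,k}$ with no lower corners, the hypothesis $\ps\neq\ptop$ forces some $p_t$ to have a non-empty $C_{p_t,-}$. By Lemma~\ref{disjcorners}(i) the lower corners of distinct paths in $\ps$ occupy distinct points of $\It$, so each lower corner $(j,\ell)\in C_{p_t,-}$ contributes an honest factor $Y_{j,\ell}^{-1}$ to $m$; to show $m$ is not dominant it suffices to exhibit one such factor that is not cancelled by a $Y_{j,\ell}$ coming from some upper corner $(j,\ell)\in C_{p_s,+}$. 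By Lemma~\ref{disjcorners}(ii) such a cancellation is outright forbidden in type A, which settles that case. In type B it is only possible between $(N,\ell)\in C_{p_t,-}$ and $(N,\ell)\in C_{p_s,+}$ for $s=t\pm 1$; moreover, the short-node conventions $(N,\ell)\in C_{p,\pm}\iff y_N=\ell\mp\eps$ force $p_s$ to lie above $p_t$ in column $2N-1$, so the non-overlap condition pins down $s=t-1$.

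For the type-B case I choose a lower corner $(j_0,\ell_0)\in C_{p_{t_0},-}$ with $\ell_0$ \emph{maximal} among all lower corners of all paths in $\ps$. If $j_0<N$, then $Y_{j_0,\ell_0}^{-1}$ survives in $m$ by the above. Otherwise $j_0=N$ and the cancellation requires $(N,\ell_0)\in C_{p_{t_0-1},+}$, for which I argue in two subcases. (i) If $p_{t_0-1}=\phigh_{i_{t_0-1},k_{t_0-1}}$, then $\mon(\phigh_{i,k})=Y_{i,k}$ gives $(i_{t_0-1},k_{t_0-1})=(N,\ell_0)$. The $\pm 2$-step structure of paths in $\scr P_{N,k_{t_0}}$ forces any lower corner of $p_{t_0}$ at $(N,\ell_0)$ to satisfy $\ell_0-k_{t_0}\in\{2,6,\dots,4N-2\}$, so $\ell_0>k_{t_0}$; but the snake inequality $k_{t_0}-k_{t_0-1}=k_{t_0}-\ell_0\geq 2$ demands $\ell_0<k_{t_0}$, a contradiction. (ii) If $p_{t_0-1}\neq\phigh_{i_{t_0-1},k_{t_0-1}}$, then the end condition $y_{N-1}=\ell_0+1$ imposed by the upper corner at $(N,\ell_0)$, combined with the $\pm 2$-step structure of the relevant half of $p_{t_0-1}$, forces a strict local maximum of the $y$-profile at some height $\ell'>\ell_0$, producing a lower corner of $p_{t_0-1}$ at $(j',\ell')$ with $\ell'>\ell_0$ and contradicting the maximality of $\ell_0$.

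The main obstacle is the structural bookkeeping in subcase~(ii): one must handle the alternatives $i_{t_0-1}=N$ versus $i_{t_0-1}<N$ (in the latter, identifying which half of the bow-tie-shaped path $p_{t_0-1}$ carries the upper corner at $(N,\ell_0)$) and the two residues of $k_{t_0-1}$ modulo $4$. In every case the core observation is the same: in a $\pm 2$-step sequence that ends at the controlled value $\ell_0+1$, any deviation from the monotone staircase produces a strict local maximum at a height strictly greater than $\ell_0$. The anti-dominant assertion is proved by the mirror argument, exchanging upper and lower corners throughout, replacing \emph{maximal} $\ell$ by \emph{minimal} $\ell$, and substituting $\plow$/$\pbot$ for $\phigh$/$\ptop$.
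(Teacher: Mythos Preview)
Your proof is correct and takes a somewhat different route from the paper's. The paper's argument is shorter: it picks any $t$ with $p_t\neq\phigh_{i_t,k_t}$, takes a lower corner $(i,k)$ of $p_t$, and if this is cancelled by an upper corner of $p_{t-1}$ at $(N,k)$, asserts directly that $p_{t-1}$ (which cannot be highest) has a lower corner at some node $i'\neq N$; by Lemma~\ref{disjcorners} that second lower corner survives uncancelled. Your extremality device---choosing $\ell_0$ maximal among all lower-corner heights---is an alternative way to terminate the potential chain of cancellations: rather than asserting $i'\neq N$ for the next lower corner, you show that any lower corner of $p_{t_0-1}$ lies at height strictly greater than $\ell_0$, contradicting maximality. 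Both approaches rest on the same local observation near the short node (the half of $p_{t_0-1}$ carrying the upper corner at $(N,\ell_0)$ must end with a down-step, so $y_{N-1}=\ell_0+1$, and any deviation from monotone descent produces a local maximum at height $>\ell_0$), so the difference is mostly one of packaging; your version makes the termination mechanism more explicit at the cost of the extra casework you flag.

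One small gap: in your subcase~(i) you write ``paths in $\scr P_{N,k_{t_0}}$'' and use the snake bound $k_{t_0}-\ell_0\geq 2$, both of which presuppose $i_{t_0}=N$. Lemma~\ref{disjcorners} forces only the \emph{node} of the shared corner to be $N$, not the index $i_{t_0}$. When $i_{t_0}<N$ the lower corner of $p_{t_0}$ at $(N,\ell_0)$ comes from one of the two halves, and the bounds change: from the $a$-half one gets $\ell_0\geq k_{t_0}-2N+2i_{t_0}+3$, while the snake inequality (with $i_{t_0-1}=N$, $i_{t_0}<N$) gives $k_{t_0}-\ell_0\geq 2N-2i_{t_0}+3$. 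These are still incompatible, so the repair is routine.
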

\begin{proof}
If $\ps\neq\ptop$ then there exists $t$, $1\leq t\leq T$, such that $p_t\neq \phigh_{i_t,k_t}$. Therefore $p_t$ has at least one lower corner at, \confer \S\ref{sec:highestpath}. Suppose it is at $(i,k)\in \It$. If no path in $\ps$ has an upper corner at $(i,k)$ then  $\prod_{t=1}^T \mon(p_t)$ is not dominant. Lemma \ref{disjcorners} states that the only way another path can have an upper corner at $(i,k)$ without there being any overlaps is if $i=N$ and $s=t-1$. Since $s$ precedes $t$ in the snake, $p_s$ cannot equal $\phigh_{i_s,k_s}$ and must have a lower corner at some $(i',k')\in \It$, $i'\neq N$. No path in $\ps$ can have an upper corner here, again by Lemma \ref{disjcorners}. So the resulting factor $Y_{i',k'}^{-1}$ cannot be cancelled and   $\prod_{t=1}^T \mon(p_t)$ is not dominant.
The argument for anti-dominant monomials is similar.
\end{proof}

\section{The path formula for $q$-characters of snake modules}\label{sec:snakechar}
We are now in a position to state the second main result of the paper.
\begin{thm}
\label{snakechar} Let $(i_t,k_t)\in \It$, $1\leq t\leq T$, be a snake of length $T\in \Z_{\geq 1}$. Then
\be\chi_q\left(\L\left(\prod_{t=1}^T Y_{i_t,k_t}\right)\right) = \sum_{\displaystyle \substack{(p_1,\dots,p_T)\in \nops}} 
\prod_{t=1}^T \mon(p_t).\label{sncht}\ee
The module $\L(\prod_{t=1}^T \YY {i_t}{k_t})$ is thin, special and anti-special.
\end{thm}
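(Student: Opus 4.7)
The plan is to apply Theorem \ref{thmA} directly to the candidate set
\[
\mc M := \left\{ \prod_{t=1}^T \mon(p_t) : (p_1,\dots,p_T) \in \nops \right\} \subset \Z[Y^{\pm 1}_{j,\ell}]_{(j,\ell)\in \It},
\]
and read off the three conditions of that theorem from the combinatorial lemmas of Section \ref{sec:paths}. The distinctness needed to identify $\mc M$ with the right-hand side of (\ref{sncht}) is exactly Corollary \ref{injmap}, so each element of $\nops$ contributes a distinct monomial and the sum in (\ref{sncht}) has no cancellation.

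First I would check condition (\ref{monlydom}) of Theorem \ref{thmA}. Lemma \ref{pathsspeciallem} says that any $\ps \in \nops$ with $\ps \neq \ptop$ produces a non-dominant monomial, while $\prod_{t=1}^T \mon(\phigh_{i_t,k_t}) = \prod_{t=1}^T Y_{i_t,k_t} =: m_+$ is manifestly dominant. Thus $\{m_+\} = \P^+ \cap \mc M$. Next, condition (\ref{onewayback}) is exactly the content of Lemma \ref{updownlem}: if $mA_{i,k}^{-1} \notin \mc M$ then $mA_{i,k}^{-1}A_{j,\ell} \notin \mc M$ unless $(j,\ell)=(i,k)$. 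Finally, for condition (\ref{inthinsimple}), fix $m \in \mc M$ and $i \in I$, and let $\overline{\scr P}$ be the orbit of the tuple producing $m$ under sequences of raising/lowering moves at points $(i,\ell) \in \Iw$. Lemma \ref{inthinsimplelem} shows that $\beta_i\!\left(\sum_{\pps \in \overline{\scr P}} \prod_t \mon(p'_t)\right)$ is the $q$-character of a thin simple $\uqsl i$-module. By Lemma \ref{movelemma} together with injectivity (Corollary \ref{injmap}), the monomials $m' \in m\Z[A_{i,a}^{\pm 1}]_{a \in \Cx} \cap \mc M$ are in bijection with the tuples in $\overline{\scr P}$; taking $M$ to be the unique $i$-dominant element in this intersection (the $\uqsl i$-highest monomial, which is the tuple obtained by performing all available raising moves at $i$-points) supplies the required $i$-dominant witness.

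With all three conditions verified, Theorem \ref{thmA} yields $\chi_q(\L(m_+)) = \sum_{m \in \mc M} m$, and declares $\L(m_+)$ to be thin and special. Anti-speciality then follows by the second half of Lemma \ref{pathsspeciallem}: the unique anti-dominant monomial in $\mc M$ is $\prod_{t=1}^T \mon(\plow_{i_t,k_t})$, corresponding to the tuple $\pbot$.

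The main delicate step will be (\ref{inthinsimple}): verifying that the subset of $\mc M$ obtained by $i$-moves from $m$ is precisely a thin special $\uqsl i$-character. This is exactly where the combinatorics of non-overlapping paths must interact cleanly with the $\uqslt$ ball-and-box calculus of \S\ref{boxpic}, and the decisive input is Lemma \ref{firstXlem}, which guarantees that the first overlap introduced by a naive lowering move always corresponds to the forbidden pattern $\bb\bb \to \eb\eb$ (or its dual), i.e. precisely the step missing from the $\uqsl i$-character of the relevant simple module. Snake position in type B was tailored (via the condition on $k-k' \bmod 4$) to make this correspondence exact, and it is the only place where the full strength of the snake hypothesis is used; the remaining conditions are essentially formal consequences of the path combinatorics.
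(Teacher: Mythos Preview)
Your proposal is correct and follows essentially the same route as the paper: define $\mc M$ from $\nops$, verify the three hypotheses of Theorem \ref{thmA} via Lemma \ref{pathsspeciallem}, Lemma \ref{updownlem}, and Lemma \ref{inthinsimplelem} respectively, invoke Corollary \ref{injmap} for distinctness, and deduce anti-speciality from the second half of Lemma \ref{pathsspeciallem}. Your explicit identification of $\overline{\scr P}$ with $m\Z[A_{i,a}^{\pm 1}]_{a\in\Cx}\cap\mc M$ via Lemma \ref{movelemma} and Corollary \ref{injmap} is a useful clarification that the paper leaves implicit.
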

\begin{proof}
Let $m_+:=
\prod_{t=1}^Tm(\phigh_{i_t,k_t})$ be the highest monomial of  $\chi_q(\L(\prod_{t=1}^T \YY {i_t}{k_t}))$. Define
\be \mc M:= \{ \prod_{t=1}^T \mon(p_t) : \ps \in \nops \}.\nn\ee
We shall show that the conditions of Theorem \ref{thmA} apply to the pair $(m_+,\mc M)$. 
Indeed, Property (\ref{monlydom}) follows from Lemma \ref{pathsspeciallem},
Property (\ref{onewayback}) is Lemma \ref{updownlem}, and
Property (\ref{inthinsimple}) is Lemma \ref{inthinsimplelem}. Since, by Corollary \ref{injmap}, $\sum_{m\in \mc M} m = \sum_{\ps\in\nops}\prod_{t=1}^T \mon(p_t)$, the formula (\ref{sncht}) and the properties thin and special follow from Theorem \ref{thmA}. Anti-special then follows from Lemma \ref{pathsspeciallem}.  
\end{proof}

\begin{exmp}\label{notab}
In type $B_5$, consider the simple module $\L(Y_{4,0}Y_{5,5} Y_{4,10})$. This is a minimal snake module; its snake is shown on the left in Figure \ref{notabfig}. By Theorem \ref{snakechar}, its $q$-character includes the monomial
\be\nn Y_{2,12}^{-1}Y_{5,7}Y_{5,1} Y_{5,13} Y_{4,14}^{-1} Y_{2,10} Y_{1,22}^{-1} Y_{5,15} Y_{5,17} Y_{4,18}^{-1} Y_{2,14},\ee 
which corresponds to the non-overlapping tuple of paths shown on the right in Figure \ref{notabfig}.
\end{exmp}

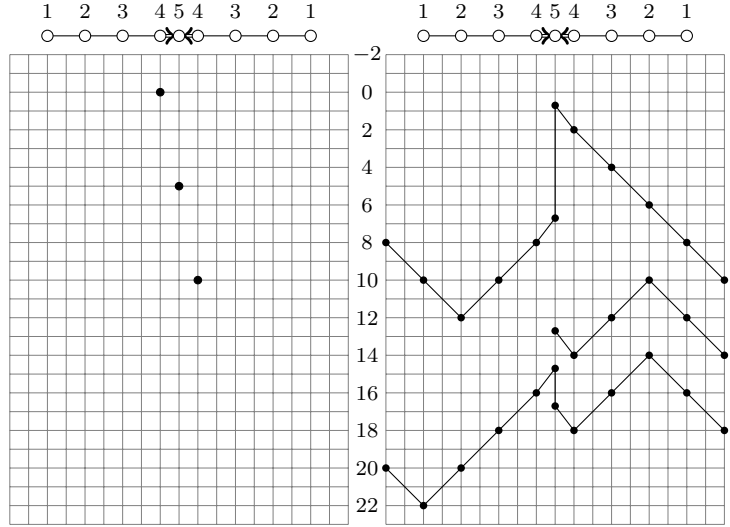
\begin{figure}
\caption{\label{notabfig} Illustration of Theorem \ref{snakechar} in type $B_5$. Left: a minimal snake. Right: a tuple of non-overlapping paths for this snake. See Example \ref{notab}.}
\be\nn
\begin{tikzpicture}[scale=.25,yscale=-1]
\draw[help lines] (0,0) grid (18,25);
\draw (2,-1) -- (8,-1); \draw (10,-1) -- (16,-1);
\draw[double,->] (8,-1) -- (8.8,-1); \draw[double,->] (10,-1) -- (9.2,-1);
\filldraw[fill=white] (9,-1) circle (3mm) node[above=1mm] {$\scriptstyle 5$};
\foreach \x in {1,2,3,4} {
\filldraw[fill=white] (2*\x,-1) circle (3mm) node[above=1mm] {$\scriptstyle\x$}; 
\filldraw[fill=white] (2*9-2*\x,-1) circle (3mm) node[above=1mm] {$\scriptstyle\x$}; }
\foreach \x/\y in {8/0,9/5,10/10}{\filldraw (\x,\y+2) circle (2mm);}
\begin{scope}[xshift=20cm]
\draw[help lines] (0,0) grid (18,25);
\draw (2,-1) -- (8,-1); \draw (10,-1) -- (16,-1);
\draw[double,->] (8,-1) -- (8.8,-1); \draw[double,->] (10,-1) -- (9.2,-1);
\filldraw[fill=white] (9,-1) circle (3mm) node[above=1mm] {$\scriptstyle 5$};
\foreach \x in {1,2,3,4} {
\filldraw[fill=white] (2*\x,-1) circle (3mm) node[above=1mm] {$\scriptstyle\x$}; 
\filldraw[fill=white] (2*9-2*\x,-1) circle (3mm) node[above=1mm] {$\scriptstyle\x$}; }
\foreach \y in {-2,0,2,4,6,8,10,12,14,16,18,20,22} {\node at (-1,\y+2) {$\scriptstyle\y$};}

\begin{scope}[every node/.style={minimum size=.1cm,inner sep=0mm,fill,circle}]
\draw (0,8+2) node{}  -- ++(2,2) node{}  -- ++(2,2) node{}  -- ++(2,-2) node{}  -- ++(2,-2) node{}  -- ++(1,-1.3) node{}  -- ++(0,-6) node{}  -- ++(1,1.3) node{}  --++(2,2) node{}  -- ++(2,2) node{}  -- ++(2,2) node{}  -- ++(2,2) node{}   ;
\draw (18,14+2) node{}  -- ++(-2,-2) node{}  -- ++(-2,-2) node{}  -- ++(-2,2) node{}  -- ++(-2,2) node{}  -- ++(-1,-1.3) node{}  ;
\draw (18,18+2) node{}  -- ++(-2,-2) node{}  -- ++(-2,-2) node{}  -- ++(-2,2) node{}  -- ++(-2,2) node{}  -- ++(-1,-1.3) node{}  -- ++(0,-2) node{}  -- ++(-1,1.3) node{}  -- ++(-2,2) node{}  -- ++(-2,2) node{}  -- ++(-2,2) node{}  -- ++(-2,-2) node{}   ;
\end{scope}

\end{scope}
\end{tikzpicture}
\ee
\end{figure}

\section{Existing results and skew tableaux}\label{sec:tableaux}
In types A and B, Theorem \ref{snakechar} gives a closed form, in terms of non-overlapping paths, for the $q$-character of the $\uqgh$-module $\L\left(\prod_{t=1}^T Y_{i_t,k_t}\right)$ where $(i_t,k_t)_{1\leq t\leq T}$ is any snake whose elements lie in the set $\It$, \confer \S\ref{sec:snakes}. 
Let us define a subset $\Iy\subseteq \It$ by:
\begin{enumerate}[Type A:]
\item $\Iy := \It$.
\item $\Iy := \{(i,k) \in I\times \Z : i<N \text{ and } 2i-k\equiv 2 \mod 4\}$.
\end{enumerate}
 
\begin{rem}\label{lrrem}In type B, the points of $\Iy$ all lie strictly to the left (resp. right) of the column corresponding to the short node, in the figures as drawn,  when $N\equiv 0\!\!\mod 2$ (resp. $N\equiv 1\!\!\mod 2$). \end{rem}

In the existing literature, closed forms have been found or conjectured 
for the $q$-character of $\L\left(\prod_{t=1}^T Y_{i_t,k_t}\right)$ whenever $(i_t,k_t)_{1\leq t\leq T}$ is a snake whose elements lie in $\Iy$.
In this section we describe how the existing closed forms can be recovered as special cases of Theorem \ref{snakechar}. The combinatorial objects used in these closed forms are (or are equivalent to) skew tableaux.

In this paper, let us say that a \emph{skew diagram} $\sd$ is finite subset $\sd \subset \Z\times\Z_{>0}$ such that 
\begin{enumerate}\item if $\sd\neq \emptyset$ then there is a $j$ such that $(j,1)\in\sd$, and
\item if $(i,j)\notin \sd$ then either $\forall i'\geq i, \forall j'\geq j$, $(i',j') \notin \sd$ or $\forall i'\leq i, \forall j'\leq j$, $(i',j') \notin \sd$.
\end{enumerate} 
If $(i,j)\in \sd$ we say $\sd$ has a \emph{box} in \emph{row} $i$, \emph{column} $j$. By definition the left-most (i.e. lowest-numbered) column containing a box is numbered 1. 
However, as a matter of convenience, we do not insist that the top-most (i.e. lowest-numbered) row containing a box is numbered 1, and we do not regard skew diagrams related by a vertical shift as identical.

Define a set $\Alp$ (the \emph{alphabet}) equipped with a total ordering $<$ (\emph{alphabetical ordering}) as follows:
\begin{enumerate}[Type A:]
\item $\Alp := I = \{1,2,\dots N,N+1\}$, $1<2<\dots <N<N+1$.
\item $\Alp := \{1,2,\dots,N,0,\bar N,\dots, \bar 2,\bar 1 \}$, $1<2<\dots<N<0<\bar N<\dots< \bar 2<\bar 1$. 
\end{enumerate}
A \emph{skew tableaux} $\T$ with shape $\sd$ is then any map $\T:\sd \to \Alp$ that obeys the following horizontal rule (H) and vertical rule (V):
\begin{enumerate}[Type A:]
\item \begin{enumerate} \item[(H)] $\T(i,j) \leq \T(i,j+1)$ \item[(V)] $\T(i,j)< \T(i+1,j)$.\end{enumerate}
\item \begin{enumerate} \item[(H)] $\T(i,j) \leq \T(i,j+1)$ and $(\T(i,j),\T(i,j+1)) \neq (0,0)$ 
                      \item[(V)] $\T(i,j) < \T (i+1,j)$ or $(\T(i,j),\T(i+1,j)) = (0,0).$ \end{enumerate} 
\end{enumerate}
In type A these are the usual rules for a semi-standard skew tableaux. In type B, see \cite{KOS}.

Let $\Tab\sd$ be the set of skew tableaux with shape $\sd$.

\begin{defn}\label{sddef}
Given a snake $(i_t,k_t)_{1\leq t\leq T}$ whose elements lie in $\Iy$, define a set
\be \sd_{\sn} := \left\{ (s,t)\in \Z\times \Z_{\geq 0} : 1\leq t\leq T,\,\, t-\frac{k_t}{2r^\vee}-\frac{i_t-1}{2} \leq s \leq t-\frac{k_t}{2r^\vee}+\frac{i_t-1}{2}\right\}.\nn\ee
\end{defn}
Note that the bounds on $s$ are indeed integers and that the top-most (i.e. lowest numbered) row containing a box is row $T-\frac{k_T}{2r^\vee}-\frac{i_T-1}{2}$. (Recall $r^\vee=1$ in type A and $r^\vee = 2$ in type B.)

\begin{prop}\label{snake2skew}  The map $\sn \mapsto \sd_\sn$ is a bijection from the set of snakes of length $T$ with elements in $\Iy$ to the set of skew diagrams whose non-empty columns are $1,2,\dots,T$  and none of whose columns has more than $N$ (resp. $N-1$) boxes in type A (resp. type B). 
\end{prop}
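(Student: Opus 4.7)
The plan is to check that the explicit construction $\sn \mapsto \sd_\sn$ is well-defined, exhibit an explicit inverse, and verify that both maps land in the specified targets. Set $a_t := t - k_t/(2r^\vee)$ and $b_t := (i_t-1)/2$; then column $t$ of $\sd_\sn$ has $i_t$ boxes, occupying the row interval $[a_t - b_t,\, a_t + b_t]$. First I would check that $a_t \pm b_t$ are integers: in type A this amounts to $i_t + k_t$ being odd, which is the defining condition of $\It$, while in type B (with $i_t < N$) it amounts to the congruence $k_t \equiv 2i_t - 2 \pmod 4$, which is the defining condition of $\Iy$. Since $i_t \in I$ in type A and $i_t < N$ in type B, the column-length bound $i_t \le N$ (resp.\ $i_t \le N-1$) is immediate.

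The key algebraic point is that the snake position inequality is equivalent to monotonicity of the column extents. A direct computation gives $a_{t+1} - a_t = 1 - (k_{t+1} - k_t)/(2r^\vee)$ and $b_{t+1} - b_t = (i_{t+1} - i_t)/2$, so the pair of inequalities requiring that the top and bottom rows of column $t{+}1$ each lie weakly above those of column $t$ together amount to $k_{t+1} - k_t \ge r^\vee |i_{t+1} - i_t| + 2r^\vee$. In type A this is exactly $k_{t+1} - k_t \ge |i_{t+1} - i_t| + 2$, and in type B exactly $k_{t+1} - k_t \ge 2|i_{t+1} - i_t| + 4$, matching the snake position conditions for elements of $\Iy$. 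From this monotonicity it is straightforward to verify that $\sd_\sn$ satisfies both axioms of a skew diagram (the column-interval property and the ``above-left or below-right'' property), and that its non-empty columns are precisely $1,\ldots,T$. The residual mod 4 condition in type B, $k_{t+1} - k_t \equiv 2|i_{t+1}-i_t| \pmod 4$, holds automatically by tracking the parity of $a_t$: the defining congruence of $\Iy$ forces $a_t$ to be an integer precisely when $i_t$ is odd, so $4(a_{t+1}-a_t) \bmod 4$ equals $0$ or $2$ according to whether $i_{t+1}$ and $i_t$ have the same parity.

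For the inverse, given a skew diagram $\sd$ of the specified form, column $t$ is an interval $[\top_t, \bot_t]$ by skew-diagram axiom (2) (apply it to any missing middle box), and the monotonicities $\top_{t+1} \le \top_t$ and $\bot_{t+1} \le \bot_t$ likewise follow from axiom (2). I would then recover $i_t := \bot_t - \top_t + 1$, $a_t := (\top_t + \bot_t)/2$, and $k_t := 2r^\vee(t - a_t)$; the same parity and inequality analysis run in reverse shows $(i_t, k_t) \in \Iy$ and that the snake position conditions hold, while mutual inverseness of the two constructions is immediate from the explicit formulas. The argument is essentially bookkeeping; the only mildly delicate step is matching the mod 4 and integrality conditions in type B, which works out cleanly because the parity of $a_t$ is pinned by the parity of $i_t$ through the definition of $\Iy$.
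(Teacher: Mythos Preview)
Your proof is correct and follows essentially the same approach as the paper's own proof: both identify column $t$ as a stack of $i_t$ boxes with top row $t-k_t/(2r^\vee)-(i_t-1)/2$, and both reduce the skew-diagram shape conditions to the pair of inequalities $k_{t+1}-k_t \geq r^\vee(2+|i_{t+1}-i_t|)$, which is the snake position condition for points in $\Iy$. You are in fact more thorough than the paper, explicitly verifying the integrality conditions and the automatic mod~4 congruence in type B, and writing out the inverse map; the paper leaves these as implicit.
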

\begin{proof}
By definition $\sd_\sn$ has, in each column $t$, $1\leq t\leq T$, a stack of $i_t$ adjacent boxes, the top-most of which is in row $t-\frac{k_t}{2r^\vee}-\frac{i_t-1}{2}$, and no boxes elsewhere. It is clear that $\sn$ can be reconstructed from this set of boxes, so the map injective. To check $\sd_\sn$ is a skew diagram we must check that for each $t$, $1\leq t \leq T-1$, the bottom box in column $t$ is not above the bottom box in column $t+1$, and the top box in column $t$ is not above the top box in column $t+1$. These conditions are
\be k_{t+1}-k_t \geq (2+ i_{t+1}-i_t)r^\vee \quad\text{and}\quad k_{t+1}-k_t \geq (2+ i_t-i_{t+1})r^\vee\nn\ee
respectively. This is exactly the definition of snake position for points $(i_t,k_t)$ and $(i_{t+1},k_{t+1})$ in $\Iy$, \confer Definition \ref{snakepos}. So the map is onto the stated set of skew diagrams.
\end{proof}
\begin{rem}\label{sdrem}\begin{enumerate}[(i)]
  \item Minimal snakes (\S\ref{snakepos}) in $\Iy$ correspond to skew diagrams in which, for each occupied column except the last, either the top-most box in that column is in the same row as the top-most box in the next column, or the bottom-most box in that column is in the same row as the bottom-most box in the next column. If the top- or bottom-most boxes in \emph{all} the columns are aligned in one row, then the corresponding module is a minimal affinization. 
\item If a skew diagram has an empty column $j$ then it describes the same representation as the skew diagram in which each box to the right of column $j$ is translated one step up and to the left, \confer (\ref{mtab}). So there is no loss of generality in considering only skew diagrams that have no empty columns between occupied columns.
\end{enumerate}
\end{rem}

To each $\T\in\Tab\sd$ we associate a monomial in $\Z[Y_{i,k}^{\pm 1}]_{(i,k)\in \It}$ as follows:
\be M(\T) := \prod_{(i,j)\in \lambda/\mu} \boxed{\T(i,j)}_{\,2r^\vee(j-i)} \label{mtab}\ee
where
\begin{enumerate}[Type A:] 
\item  $\boxed{i}_k:= Y_{i-1,i+k}^{-1} Y_{i,i-1+k}^{\phantom{+1}}$,  $1\leq i\leq N+1$.
\item  $\boxed{i}_k:= Y_{i-1,2i+k}^{-1} Y^{\phantom{+1}}_{i,2i-2+k} $ ,  $1\leq i\leq N-1$\\ 
$\boxed{N}_k := Y_{N-1,2N+k}^{-1}  Y_{N,2N-3+k}Y^{\phantom{+1}}_{N,2N-1+k}$\\ 
$\boxed{0}_k := Y_{N,2N+1+k}^{-1} Y^{\phantom{+1}}_{N,2N-3+k}$\\
$\boxed{\bar N}_k :=  Y_{N,2N-1+k}^{-1}Y_{N,2N+1+k}^{-1} Y^{\phantom{+1}}_{N-1,2N-2+k}$  \\ 
$\boxed{\bar i}_k := Y^{-1}_{i,4N-2i+k} Y^{\phantom{+1}}_{i-1,4N-2-2i+k}$  , $1\leq i\leq N-1$
\end{enumerate}
with, by convention, $Y_{0,k}:= 1$ and $Y_{N+1,k}:=1$ for all $k\in\Z$. Note that, in both types A and B, $\{\boxed{i}_0\}_{i\in \Alp}$ are the monomials of $q$-character of the first fundamental representation, $\L(Y_{1,0})$. Compare for example Figure \ref{Bfundfig} in type $B_4$.

\begin{prop}
Let $\sn$ be a snake whose elements are in $\Iy$ and $\sd$ the corresponding skew diagram as in Definition \ref{sddef}. There is a bijection $\nops \to\Tab\sd; \ps \mapsto \T_\ps$ between non-overlapping tuples of paths and skew tableaux such that
\be M(\T_\ps) = \prod_{t=1}^T \mon(p_t) .\label{eqmon}\ee
\end{prop}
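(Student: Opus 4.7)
The plan is to establish the bijection column-by-column, with the skew diagram $\sd_{\sn}$ assigning each path $p_t$ to the column $t$ of $\T$, and then to show that the non-overlapping condition on the tuple is exactly the horizontal rule of the tableau.

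First I would define, for each point $(i,k) \in \Iy$, an injection $c : \scr P_{i,k} \to \Alp^i$ sending a path to its column of letters. In type A, any $p \in \scr P_{i,k}$ has precisely $i$ ``up'' steps (in the drawing convention used in Figure \ref{Afundfig}) among its $N+1$ segments; $c(p)$ is the sequence of positions $j \in \{1,\dots,N+1\}$ at which these up steps occur, listed in increasing order, so $c(p)$ automatically obeys the vertical rule (V). In type B, using the decomposition (\ref{bvectpathdef1}) of $p$ into a pair $(a,b)$ of paths in $\scr P_{N,\ast}$, one extracts letters from $\{1,\dots,N,0,\bar N,\dots,\bar 1\}$ according to whether a given step of $a$ or $b$ moves in the direction towards or away from the short node, with the two possible orientations of the final step to $(2N-1,\cdot)$ producing the letters $N,0,\bar N$; the possibility $(\T(i,j),\T(i+1,j))=(0,0)$ corresponds to the unique pair in which both $a$ and $b$ terminate with opposite short-node slopes, which is precisely what the constraint $a_N - \bar a_N = (0,y)$, $y > 0$, forbids eliminating.

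Second, I would verify the single-column monomial identity
\[
\mon(p) \;=\; \prod_{\rho=1}^{i} \boxed{c_\rho(p)}_{2r^\vee(t-\rho_t^\rho)} ,
\]
where $\rho_t^\rho$ is the row index of the $\rho$-th box of column $t$ of $\sd_{\sn}$. This is a direct inspection: every up corner $(j,\ell)\in C_{p,+}$ and every down corner $(j,\ell)\in C_{p,-}$ contributes factors $Y^{\pm 1}$ that match the appropriate $\boxed{\cdot}$ entry defined in (\ref{mtab}); the telescoping cancellations between adjacent letters in a column correspond exactly to the cancellation between the ``left half'' of one corner and the ``right half'' of the next. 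One simply checks the four (resp. five) cases of (\ref{mtab}) against the definitions of $C_{p,\pm}$ given for types A and B respectively. The shift by $2r^\vee(t-i)$ in (\ref{mtab}) is absorbed precisely by the formula of Definition \ref{sddef}, since the top-most row of column $t$ is $t - k_t/(2r^\vee) - (i_t-1)/2$.

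Third, and this is the technical core, I would show that a tuple $\ps$ is non-overlapping if and only if the tableau $\T_\ps$ obtained by placing $c(p_t)$ in column $t$ obeys the horizontal rule (H). For $s<t$, the $\rho$-th up step of $p_s$ happens at the unique $x$-coordinate such that $p_s$ has accumulated $\rho$ up steps; translating ``$p_s$ lies strictly above $p_t$'' into an inequality on these positions and accounting for the row-index shift induced by the relative snake position of $(i_s,k_s)$ and $(i_t,k_t)$ yields exactly $\T_\ps(i,s) \le \T_\ps(i,t)$ for all shared rows $i$ (respectively, $<$ in type A if letters differ, plus the allowed repetition of $0$ in type B). For the converse, one reads off $p_s, p_t$ from the corresponding columns and observes that any violation of non-overlapping would force $\T_\ps(i,s) > \T_\ps(i,t)$ at some row $i$; the subtle case is where a shared corner $(N,\ell)$ occurs in two adjacent paths (as classified in Lemma \ref{disjcorners}(ii) and Figure \ref{nocrossfigureBprime}), which corresponds precisely to the permitted configuration $(\T(i,t),\T(i,t+1))=(0,0)$ or to the pairs $(\bar{N},N)$ adjacent in the same row, so no contradiction arises.

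Combining these three steps proves that $\ps \mapsto \T_\ps$ is a bijection from $\nops$ to $\Tab\sd$, and identity (\ref{eqmon}) is immediate from the single-column identity above, multiplied over $t$. The hard part will be the third step, specifically the case analysis in type B around the short node: one must verify that the exceptional coincidences of corners permitted by non-overlapping paths correspond exactly to the exceptional cases allowed by the type-B vertical/horizontal rules, and no others; this is where the restrictions defining $\Iy$ (rather than the full $\It$) are essential, since they guarantee that the column positions fall on one fixed side of the short node (Remark \ref{lrrem}) so that the letter $\bar N$ never collides with the letter $N$ in an unintended way.
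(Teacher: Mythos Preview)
Your strategy is the same as the paper's: build the map column-by-column from the step pattern of each path, verify the single-column monomial identity against the definitions of $\boxed{i}_k$, and then argue that non-overlapping is equivalent to the horizontal rule (H). The paper carries this out in exactly this order, giving the column map explicitly in both types and then asserting the equivalence with (H) ``by inspection, in the special case that the points lie in $\Iy$.''

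One point in your third step needs correction. You write that the shared-corner case of Lemma~\ref{disjcorners}(ii) ``corresponds precisely to the permitted configuration $(\T(i,t),\T(i,t+1))=(0,0)$ or to the pairs $(\bar N,N)$ adjacent in the same row, so no contradiction arises.'' But in type B the horizontal rule (H) explicitly \emph{forbids} $(0,0)$ in a row, and since $N<\bar N$ in the alphabet, the pattern $(\bar N,N)$ from left to right is also forbidden. So these are not the configurations you want to point to. The actual mechanism is the one you allude to at the very end: the restriction to $\Iy$ synchronises the parities so that, for every $t$, the half-path $a_t$ lies on one fixed side of the short-node column and $\bar a_t$ on the other (Remark~\ref{lrrem}). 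With that alignment, the comparison of $p_t$ and $p_{t+1}$ column-by-column translates directly into the row-wise inequality $\T(i,t)\le\T(i,t+1)$ together with the exclusion of horizontal $(0,0)$, and the exceptional short-node coincidences of Figure~\ref{nocrossfigureBprime} (which come from snakes \emph{not} in $\Iy$) simply do not interfere. You should rewrite that paragraph accordingly; once this is fixed, your argument goes through and matches the paper.
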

\begin{proof}
The required bijection is as follows:
\begin{enumerate}[Type A:]
\item For each $t$, $1\leq t\leq T$, $p_t$ is of the form $p_t= \big( (r,y^{(t)}_r) \big)_{0\leq r\leq N+1}$. 
Let \be \nn S_t:= \{ r\in \Alp :  y^{(t)}_{r}-y^{(t)}_{r-1} = -1 \}.\ee By definition of $\scr P_{i_t,k_t}$, $S_t$ consists of exactly $i_t$ letters from the alphabet $\Alp$. We enter these letters in the boxes in column $t$, in alphabetical order, starting at the top box and working downwards.
\item For each $t$, $1\leq t\leq T$, the path $p_t$ is given by a pair $(a_t,\bar a_t)$ where $a_t\in \scr P_{N,k_t-(2N-2i_t-1)}$ and $\bar a_t \in \scr P_{N,k+(2N-2i-1)}$. 
Let $a_t =: \big( (x_r^{(t)},y^{(t)}_r) \big)_{0\leq r \leq N}$ and  $\bar a_t =: \big( (\bar x_r^{(t)},\bar y^{(t)}_r) \big)_{0\leq r \leq N}$, and define 
\begin{align*}   S_t &:= \left\{ r\in \Alp :   1\leq r\leq N,\,\, y^{(t)}_r-y^{(t)}_{r-1} < 0 \right\} \\
 \bar S_t &:= \left\{\bar r\in \Alp : 1\leq r\leq N,\,\, \bar y^{(t)}_r- \bar y^{(t)}_{r-1} >0 \right\}. \end{align*} 
Starting from the top box in column $t$ and working downwards, we enter the letters in $S_t$ in alphabetical order. Then, starting from the bottom box in column $t$ and working upwards, we enter the letters in $\bar S_t$ in reverse alphabetical order. The condition $y^{(t)}_N>\bar y^{(t)}_N$ in the definition of $\scr P_{i_t,k_t}$ ensures that, in so doing, no box is filled twice. Finally we enter the letter $0$ into any boxes in (the middle of) column $t$ that remain unfilled.
\end{enumerate}
For each $t$, $1\leq t\leq T$, this prescription places the paths in $\scr P_{i_t,k_t}$ in bijection with the set of fillings of the column $t$ that obey the vertical rule (V) in the definition of skew tableaux above. It does so in such a way that $\mon(p_t) = \prod_{i:(i,t)\in\sd} \boxed{\T_\ps(i,t)}_{2r^\vee(t-i)}$, from which (\ref{eqmon}) follows.  

Finally, it is seen by inspection that the non-overlapping condition (\S\ref{overlapdef}) on the paths is equivalent, in the special case that the points $\sn$ lie in $\Iy$, to the horizontal rule (H) in the definition of skew tableaux.
\end{proof}

\begin{cor}\label{tabchar} Let $(i_t,k_t)\in\Iy$, $1\leq t\leq T$, be a snake.  Then
\be\label{tchr} \chi_q\left(\L\left(\prod_{t=1}^T Y_{i_t,k_t}\right)\right) = \sum_{\T\in\Tab\sd} M(\T)\ee
where $\sd$ is the corresponding skew diagram according to Definition \ref{sddef}.
\end{cor}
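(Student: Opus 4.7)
The plan is essentially a one-step assembly: combine Theorem~\ref{snakechar}, which expresses $\chi_q(\L(\prod_t Y_{i_t,k_t}))$ as a sum over $\nops$, with the bijection $\ps \mapsto \T_\ps$ between $\nops$ and $\Tab\sd$ constructed in the preceding Proposition. First I would invoke Theorem~\ref{snakechar} to write
\[
\chi_q\!\left(\L\!\left(\prod_{t=1}^T Y_{i_t,k_t}\right)\right) \;=\; \sum_{\ps\in\nops} \prod_{t=1}^T \mon(p_t).
\]
The hypothesis that the snake lies in $\Iy$ is exactly what makes the previous Proposition applicable, so the bijection $\nops \to \Tab\sd$ is available.

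Next I would re-index the sum using this bijection. Since the map $\ps\mapsto \T_\ps$ is a bijection, the sum on the right above becomes $\sum_{\T\in\Tab\sd}$ of the corresponding summand. Applying the monomial identity
\[
M(\T_\ps) \;=\; \prod_{t=1}^T \mon(p_t)
\]
from the Proposition then yields the claimed formula
\[
\chi_q\!\left(\L\!\left(\prod_{t=1}^T Y_{i_t,k_t}\right)\right) \;=\; \sum_{\T\in\Tab\sd} M(\T).
\]

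There is essentially no obstacle here; the corollary is a packaging of Theorem~\ref{snakechar} together with the combinatorial Proposition in the language of skew tableaux. The substantive content, namely the bijection and the monomial identity, has already been established in the previous Proposition, and the correctness of the indexing set $\Tab\sd$ relative to the snake $\sn$ was verified in Proposition~\ref{snake2skew}. Hence the proof is simply the two-line substitution above.
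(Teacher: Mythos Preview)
Your proposal is correct and follows exactly the paper's approach: the paper's proof is the single line ``Given the preceding proposition, this is immediate from Theorem~\ref{snakechar},'' and you have simply unpacked that line into its two constituent steps.
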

\begin{proof}
Given the preceding proposition, this is immediate from Theorem \ref{snakechar}.
\end{proof}

In type A, Corollary \ref{tabchar} was previously known, at least in the Yangian case  \cite{NT,CherednikGT}. 
In type B, Corollary \ref{tabchar} was previously known for those skew diagrams giving minimal affinizations \cite{HernandezMinAff}; \confer Remark \ref{sdrem} (i).

The right-hand side of (\ref{tchr}) is the tableaux expression for the \emph{Jacobi-Trudi determinant} in type A or B. Corollary \ref{tabchar} thus confirms the conjecture made in \cite{KOS} (and, specifically in the language of $q$-characters, in \cite{NN1}, Conjecture 2.2, part 1) that the type B Jacobi-Trudi determinant gives the $q$-character of an irreducible representation of the quantum affine algebra.

\begin{rem}\begin{enumerate}[(i)]
\item 
The paths in the present paper and those of \cite{NN1} are a priori unrelated, and are being used to solve distinct problems. Our paths are constructed to correspond to $q$-characters of fundamental representations, and the notion of ``non-overlapping'' is designed to pick out the $q$-character of the simple quotient of a suitably ordered tensor product of fundamental factors. By contrast, the ``non-intersecting'' paths of \cite{NN1} are used in deriving the skew tableaux expression for the Jacobi-Trudi determinant, and originate in the Gessel-Viennot path method for determinants \cite{GVpaths}. 
\item For simplicity in stating Definition \ref{sddef}, we defined $\Iy$ in type B to contain no points $(N,k)$, $k\in \Z$. In fact the tableaux description extends to snakes in which such points occur in neighbouring pairs: each such pair corresponds to a column with $>N-1$ boxes.  For example $\L(Y_{N,1} Y_{N,3+4k})$, $k\in \Z_{\geq 0}$ corresponds to a single column of $N+k$ boxes.  Thus, one could replace ``strictly'' by ``weakly'' in Remark \ref{lrrem}. Furthermore, \cite{KOS} define ``hatched'' tableaux, which allow one to delete, from such a snake, exactly one point $(N,k)$, thereby describing some spin-odd representations.
\end{enumerate}\end{rem}

\subsection{Examples}\label{sec:examples}
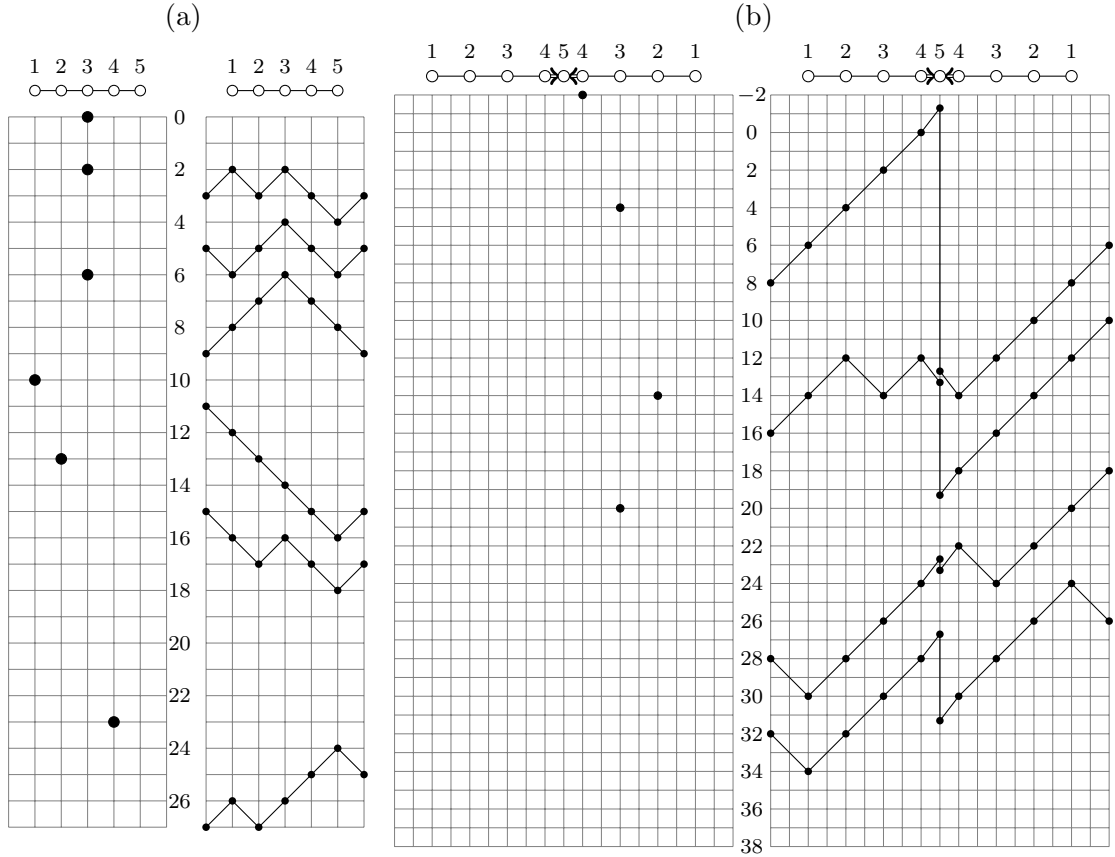
\begin{figure} \caption{\label{examplesnakefig} See \S\ref{sec:examples}. In types $A_5$, (a),  and $B_5$, (b): a snake whose elements lie in $\Iy$ and which therefore corresponds to a skew diagram (left), and a non-overlapping tuple of paths for that snake (right).}
\be\nn \begin{matrix} \text{(a)} & \text{(b)} \\ 
\begin{tikzpicture}[scale=.35,yscale=-1]
  \draw[help lines] (0,0) grid (6,27);
\draw (1,-1) -- (5,-1);
\foreach \x in {1,2,3,4,5} {\filldraw[fill=white] (\x,-1) circle (2mm) node[above=1mm] {$\scriptstyle\x$}; }
\foreach \x/\y in {3/0,3/2,3/6,1/10,2/13,4/23}{\filldraw (\x,\y) circle (2mm);}
\begin{scope}[xshift=7.5cm]
\draw[help lines] (0,0) grid (6,27);
\foreach \y in {0,2,4,6,8,10,12,14,16,18,20,22,24,26} {\node at (-1,\y) {$\scriptstyle\y$};}
\draw (1,-1) -- (5,-1);
\foreach \x in {1,2,3,4,5} {\filldraw[fill=white] (\x,-1) circle (2mm) node[above=1mm] {$\scriptstyle\x$}; }

\begin{scope}[every node/.style={minimum size=.1cm,inner sep=0mm,fill,circle}]
\draw (0,3) node{}  -- ++(1,-1) node{}  -- ++(1,1) node{}  -- ++(1,-1) node{}  -- ++(1,1) node{}  -- ++(1,1) node{}  -- ++(1,-1) node{} ;
\draw (0,5) node{}  -- ++(1,1) node{}  -- ++(1,-1) node{}  -- ++(1,-1) node{}  -- ++(1,1) node{}  -- ++(1,1) node{}  -- ++(1,-1) node{} ;
\draw (0,9) node{}  -- ++(1,-1) node{}  -- ++(1,-1) node{}  -- ++(1,-1) node{}  -- ++(1,1) node{}  -- ++(1,1) node{}  -- ++(1,1) node{} ;
\draw (0,11) node{}  -- ++(1,1) node{}  -- ++(1,1) node{}  -- ++(1,1) node{}  -- ++(1,1) node{}  -- ++(1,1) node{}  -- ++(1,-1) node{} ;
\draw (0,15) node{}  -- ++(1,1) node{}  -- ++(1,1) node{}  -- ++(1,-1) node{}  -- ++(1,1) node{}  -- ++(1,1) node{}  -- ++(1,-1) node{} ;
\draw (0,27) node{}  -- ++(1,-1) node{}  -- ++(1,1) node{}  -- ++(1,-1) node{}  -- ++(1,-1) node{}  -- ++(1,-1) node{}  -- ++(1,1) node{} ;
\end{scope}

\end{scope}
\node at (0,28) {$$};
\end{tikzpicture} &
\begin{tikzpicture}[scale=.25,yscale=-1]
\draw[help lines] (0,0) grid (18,40);
\draw (2,-1) -- (8,-1); \draw (10,-1) -- (16,-1);
\draw[double,->] (8,-1) -- (8.8,-1); \draw[double,->] (10,-1) -- (9.2,-1);
\filldraw[fill=white] (9,-1) circle (3mm) node[above=1mm] {$\scriptstyle 5$};
\foreach \x in {1,2,3,4} {
\filldraw[fill=white] (2*\x,-1) circle (3mm) node[above=1mm] {$\scriptstyle\x$}; 
\filldraw[fill=white] (2*9-2*\x,-1) circle (3mm) node[above=1mm] {$\scriptstyle\x$}; }
\foreach \x/\y in {10/0,12/6,14/16,12/22}{\filldraw (\x,\y) circle (2mm);}
\begin{scope}[xshift=20cm]
\draw[help lines] (0,0) grid (18,40);
\draw (2,-1) -- (8,-1); \draw (10,-1) -- (16,-1);
\draw[double,->] (8,-1) -- (8.8,-1); \draw[double,->] (10,-1) -- (9.2,-1);
\filldraw[fill=white] (9,-1) circle (3mm) node[above=1mm] {$\scriptstyle 5$};
\foreach \x in {1,2,3,4} {
\filldraw[fill=white] (2*\x,-1) circle (3mm) node[above=1mm] {$\scriptstyle\x$}; 
\filldraw[fill=white] (2*9-2*\x,-1) circle (3mm) node[above=1mm] {$\scriptstyle\x$}; }
\foreach \y in {-2,0,2,4,6,8,10,12,14,16,18,20,22,24,26,28,30,32,34,36,38} {\node at (-1,\y+2) {$\scriptstyle\y$};}

\begin{scope}[every node/.style={minimum size=.1cm,inner sep=0mm,fill,circle}]
\draw (18,8) node{}  -- ++(-2,2) node{}  -- ++(-2,2) node{}  -- ++(-2,2) node{}  -- ++(-2,2) node{}  -- ++(-1,-1.3) node{}  -- (9,.7) node{}  -- ++(-1,1.3) node{}  --++(-2,2) node{}  -- ++(-2,2) node{}  -- ++(-2,2) node{}  -- ++(-2,2) node{}   ;
\draw (18,12) node{}  -- ++(-2,2) node{}  -- ++(-2,2) node{}  -- ++(-2,2) node{}  -- ++(-2,2) node{}  -- ++(-1,1.3) node{}  -- (9,15.3) node{}  -- ++(-1,-1.3) node{}  --++(-2,2) node{}  -- ++(-2,-2) node{}  -- ++(-2,2) node{}  -- ++(-2,2) node{}   ;
\draw (18,20) node{}  -- ++(-2,2) node{}  -- ++(-2,2) node{}  -- ++(-2,2) node{}  -- ++(-2,-2) node{}  -- ++(-1,1.3) node{}  -- ++(0,-.6) node{}  -- ++(-1,1.3) node{}  -- ++(-2,2) node{}  -- ++(-2,2) node{}  -- ++(-2,2) node{}  -- ++(-2,-2) node{}   ;
\draw (18,28) node{}  -- ++(-2,-2) node{}  -- ++(-2,2) node{}  -- ++(-2,2) node{}  -- ++(-2,2) node{}  -- ++(-1,1.3) node{}  -- ++ (0,-4.6) node{}  --++(-1,1.3) node{}  -- ++(-2,2) node{}  -- ++(-2,2) node{}  -- ++(-2,2) node{}  -- ++(-2,-2) node{}   ;
\end{scope}
\end{scope}
\end{tikzpicture} \end{matrix}\ee
\end{figure}

\begin{exmp}
In type $A_5$, $\L(Y_{3,0} Y_{3,2} Y_{3,6} Y_{1,10} Y_{2,13} Y_{4,23})$ is a snake module. Its snake is shown in Figure \ref{examplesnakefig}(a). Its $q$-character includes the monomial 
\be\label{monaex} Y_{1,2}Y_{2,3}^{-1} Y_{3,2} Y_{5,4}^{-1} Y_{1,6}^{-1} Y_{3,4} Y_{5,6}^{-1} Y_{3,6} Y_{5,16}^{-1} Y_{2,17}^{-1} Y_{3,16} Y_{5,18}^{-1} Y_{1,26} Y_{2,27}^{-1} Y_{5,24}.\ee The non-overlapping tuple of paths corresponding to this monomial is also shown in Figure \ref{examplesnakefig}(a). The skew diagram for this representation, and the skew tableau for the particular monomial (\ref{monaex}), are respectively
\be\nn\begin{tikzpicture}[scale=.4,yscale=-1]
\begin{scope}[xshift=-.5cm,yshift=-.5cm] \draw[help lines] (1,-7) grid (7,3); \end{scope}
\foreach \y in {-7,-6,-5,-4,-3,-2,-1,0,1,2} {\node at (-.5,\y) {$\scriptstyle\y$};}
\foreach \x in {1,2,3,4,5,6} {\node at (\x,-8) {$\scriptstyle\x$};}
\begin{scope}[every node/.style={minimum size=.4cm,inner sep=0mm,fill=gray!10,draw,rectangle}]
\foreach \x/\y in {1/0,1/1,1/2,2/0,2/1,2/2,3/-1,3/0,3/1,4/-1,5/-2,5/-1,6/-7,6/-6,6/-5,6/-4} {\node at (\x,\y) {};}
\end{scope}
\end{tikzpicture}\quad\text{and}\quad
\begin{tikzpicture}[scale=.4,yscale=-1]
\begin{scope}[xshift=-.5cm,yshift=-.5cm] \draw[help lines] (1,-7) grid (7,3); \end{scope}
\foreach \y in {-7,-6,-5,-4,-3,-2,-1,0,1,2} {\node at (-.5,\y) {$\scriptstyle\y$};}
\foreach \x in {1,2,3,4,5,6} {\node at (\x,-8) {$\scriptstyle\x$};}
\begin{scope}[every node/.style={minimum size=.4cm,inner sep=0mm,draw,rectangle}]
\foreach \x/\y/\c in {1/0/1,1/1/3,1/2/6,2/0/2,2/1/3,2/2/6,3/-1/1,3/0/2,3/1/3,4/-1/6,5/-2/3,5/-1/6,6/-7/1,6/-6/3,6/-5/4,6/-4/5} {\node at (\x,\y) {\c};}
\end{scope}
\end{tikzpicture}\,\,.\ee
\end{exmp}

\begin{exmp}
In type $B_5$, $\L(Y_{4,-2} Y_{3,4} Y_{2,14} Y_{3,20} )$ is a snake module. Its snake is shown in Figure \ref{examplesnakefig}(b).  Its $q$-character includes the monomial 
\be\label{monbex} Y_{5,-1} Y_{4,14}^{-1} Y_{2,12} Y_{3,14}^{-1} Y_{4,12} Y_{5,19}^{-1}Y_{1,30}^{-1} Y_{4,22} Y_{3,24}^{-1} Y_{1,34}^{-1} Y_{5,27} Y_{5,31}^{-1}Y_{1,24}.\ee The non-overlapping tuple of paths corresponding to this monomial is also shown in Figure \ref{examplesnakefig}(b). Since the points of this snake all lie in $\Iy$, the monomials can equivalently be described by skew tableaux. The skew diagram for this representation, and the skew tableau for the particular monomial (\ref{monbex}), are respectively
\be\nn \begin{tikzpicture}[scale=.4,yscale=-1]
\begin{scope}[xshift=-.5cm,yshift=-.5cm] \draw[help lines] (1,-2) grid (5,4); \end{scope}
\foreach \y in {-2,-1,0,1,2,3} {\node at (-.5,\y) {$\scriptstyle\y$};}
\foreach \x in {1,2,3,4} {\node at (\x,-3) {$\scriptstyle\x$};}
\begin{scope}[every node/.style={minimum size=.4cm,inner sep=0mm,fill=gray!10,draw,rectangle}]
\foreach \x/\y in {1/0,1/1,1/2,1/3,2/0,2/1,2/2,3/-1,3/0,4/-2,4/-1,4/0} {\node at (\x,\y) {};}
\end{scope}
\end{tikzpicture}
\quad\text{and}\quad
\begin{tikzpicture}[scale=.4,yscale=-1]
\begin{scope}[xshift=-.5cm,yshift=-.5cm] \draw[help lines] (1,-2) grid (5,4); \end{scope}
\foreach \y in {-2,-1,0,1,2,3} {\node at (-.5,\y) {$\scriptstyle\y$};}
\foreach \x in {1,2,3,4} {\node at (\x,-3) {$\scriptstyle\x$};}
\begin{scope}[every node/.style={minimum size=.4cm,inner sep=0mm,draw,rectangle}]
\foreach \x/\y/\c in {1/0/5,1/1/0,1/2/0,1/3/0,2/0/0,2/1/\bar 5,2/2/\bar 3,3/-1/4,3/0/\bar 1,4/-2/1,4/-1/0,4/0/\bar 1} {\node at (\x,\y) {$\c$};}
\end{scope}
\end{tikzpicture}\,\,.\ee
\end{exmp}


\begin{thebibliography}{KOS95}

\bibitem[BR90]{BRrsos}
V.~V. Bazhanov and N.~Reshetikhin, \emph{Restricted solid-on-solid models
  connected with simply laced algebras and conformal field theory}, Journal of
  Physics A: Mathematical and General \textbf{23} (1990), no.~9, 1477.


\bibitem[Cha95]{Cminaffrank2}
V.~Chari, \emph{Minimal affinizations of representations of quantum groups: the
  rank {$2$} case}, Publ. Res. Inst. Math. Sci. \textbf{31} (1995), no.~5,
  873--911.

\bibitem[CH10]{CHbeyondKR}
V.~Chari and D.~Hernandez, \emph{{Beyond Kirillov-Reshetikhin modules}},
  Contemp. Math. \textbf{506} (2010), 49--81.


\bibitem[CP91]{CPsl2}
V.~Chari and A.~Pressley, \emph{Quantum affine algebras}, Comm. Math. Phys.
  \textbf{142} (1991), no.~2, 261--283. 

\bibitem[CP94a]{CPbook}
V.~Chari and A.~Pressley, \emph{{A guide to quantum groups}}, Cambridge, UK: Univ. Pr., 1994.

\bibitem[CP94b]{CP94}
V.~Chari and A.~Pressley, \emph{{Quantum affine algebras and their representations}}, CMS Conf.
  Proc. \textbf{16} (1994), 59--78.

\bibitem[CP95]{CPminaffBCFG}
V.~Chari and A.~Pressley, \emph{Minimal affinizations of representations of quantum groups: the
  nonsimply-laced case}, Lett. Math. Phys. \textbf{35} (1995), no.~2, 99--114.
 

\bibitem[CP96a]{CPminaffireg}
V.~Chari and A.~Pressley, \emph{Minimal affinizations of representations of quantum groups: the
  irregular case}, Lett. Math. Phys. \textbf{36} (1996), no.~3, 247--266.
 

\bibitem[CP96b]{CPminaffADE}
V.~Chari and A.~Pressley, \emph{Minimal affinizations of representations of quantum groups: the
  simply laced case}, J. Algebra \textbf{184} (1996), no.~1, 1--30.


\bibitem[CP97]{CPprime}
V.~Chari and A.~Pressley, \emph{Factorization of representations of
  quantum affine algebras}, Modular interfaces ({R}iverside, {CA}, 1995),
  AMS/IP Stud. Adv. Math., vol.~4, Amer. Math. Soc., Providence, RI, 1997,
  33--40.

\bibitem[CP01]{CPweyl}
V.~Chari and A.~Pressley, \emph{Weyl modules for classical and quantum affine
  algebras}, Represent. Theory \textbf{5} (2001), 191--223.

\bibitem[Che87]{CherednikGT}
I.~V. Cherednik, \emph{A new interpretation of {G}el\cprime fand-{T}zetlin
  bases}, Duke Math. J. \textbf{54} (1987), no.~2, 563--577.
 


\bibitem[Dri87]{Drinfeld1}
V.~G. Drinfeld, \emph{{Quantum groups}}, Proc. Int. Cong. Math. (Berkeley,1986)
  \textbf{1} (1987), 798--820.

\bibitem[Dri88]{Drinfeld}
V.~G. Drinfeld, \emph{{A New realization of Yangians and quantized affine algebras}},
  Sov. Math. Dokl. \textbf{36} (1988), 212--216.

\bibitem[FM01]{FM}
E.~Frenkel and E.~Mukhin, \emph{{Combinatorics of q-characters of
  finite-dimensional representations of quantum affine algebras}}, Commun.
  Math. Phys. \textbf{216} (2001), 23--57.

\bibitem[FR98]{FR}
E.~Frenkel and N.~Reshetikhin, \emph{{The q-characters of representations of
  quantum affine algebras and deformations of W-algebras}}, Contemp. Math.
  \textbf{248} ({1998}), 163--205.

\bibitem[GV85]{GVpaths}
I.~Gessel and G.~Viennot, \emph{Binomial determinants, paths, and hook
  length formulae}, Adv. in Math. \textbf{58} (1985), no.~3, 300--321.

\bibitem[Her06]{HernandezKR}
D.~Hernandez, \emph{{The Kirillov-Reshetikhin conjecture and solutions of
  T-systems}}, J. Reine Angew. Math. \textbf{2006} (2006), 63--87.

\bibitem[Her07]{HernandezMinAff}
D.~Hernandez, \emph{{On minimal affinizations of representations of quantum
  groups}}, Comm. Math. Phys. \textbf{277} (2007), 221--259.

\bibitem[HL10]{HernandezLeclerc}
D.~Hernandez and B.~Leclerc, \emph{Cluster algebras and quantum affine
  algebras}, Duke Math. J. \textbf{154} (2010), no.~2, 265--341.

\bibitem[Jim85]{Jimbo}
M.~Jimbo, \emph{{A q difference analog of U(g) and the Yang-Baxter equation}},
  Lett. Math. Phys. \textbf{10} (1985), 63--69.

\bibitem[JS10]{SchillingE6}
B.~Jones and A.~Schilling, \emph{Affine structures and a tableau model for
  $e_6$ crystals}, Journal of Algebra \textbf{324} (2010), no.~9, 2512 -- 2542.

\bibitem[KOS95]{KOS}
A.~Kuniba, Y.~Ohta, and J.~Suzuki, \emph{{Quantum Jacobi-Trudi and Giambelli
  formulae for {$U_q(B_r^{((1)})$} from the analytic Bethe ansatz}}, Journal of
  Physics A: Mathematical and General \textbf{28} (1995), no.~21, 6211.

\bibitem[LSS10]{LSS}
T.~Lam, A.~Schilling, and M.~Shimozono, \emph{{$K$}-theory {S}chubert
  calculus of the affine {G}rassmannian}, Compos. Math. \textbf{146} (2010),
  no.~4, 811--852. \MR{2660675}

\bibitem[Nak01]{Nakajima1}
H.~Nakajima, \emph{{Quiver varieties and finite dimensional representations of
  quantum affine algebras }}, J. Amer. Math. Soc. \textbf{14} ({2001}),
  145--238.

\bibitem[Nak03]{NakajimaKR}
H.~Nakajima, \emph{{$t$-analogs of $q$-characters of Kirillov-Reshetikhin modules
  of quantum affine algebras}}, Represent. Theory \textbf{7} (2003), 259--274.

\bibitem[Nak04]{Nakajima3}
H.~Nakajima, \emph{{Quiver varieties and $t$-analogs of $q$-characters of quantum
  affine algebras}}, Annals of mathematics \textbf{160} ({2004}), no.~3,
  1057--1097.

\bibitem[NN06]{NN1}
W.~Nakai and T.~Nakanishi, \emph{{Paths, tableaux and $q$-characters of quantum
  affine algebras: the $C_n$ case.}}, J. Phys. A, Math. Gen. \textbf{39}
  (2006), no.~9, 2083--2115 (English).

\bibitem[NN08]{NN4}
W.~Nakai and T.~Nakanishi, \emph{{On Frenkel-Mukhin algorithm for q-character of quantum affine
  algebras}}, to appear in Adv. Stud. in Pure Math. (2008).

\bibitem[NT98]{NT}
M.~Nazarov and V.~Tarasov, \emph{Representations of {Y}angians with
  {G}elfand-{Z}etlin bases}, J. Reine Angew. Math. \textbf{496} (1998),
  181--212. 

\bibitem[Ver]{FORM}
  J.~A.~M. Vermaseren, \emph{{New features of FORM}}, arXiv:math-ph/0010025.

\bibitem[VV02]{VV}
M.~Varagnolo and E.~Vasserot, \emph{{Standard modules of quantum affine
  algebras}}, Duke Math. J. \textbf{111} ({2002}), no.~3, 509--533.

\end{thebibliography}


\def\cprime{$'$}
\providecommand{\bysame}{\leavevmode\hbox to3em{\hrulefill}\thinspace}
\providecommand{\MR}{\relax\ifhmode\unskip\space\fi MR }
\providecommand{\MRhref}[2]{%
  \href{http://www.ams.org/mathscinet-getitem?mr=#1}{#2}
}
\providecommand{\href}[2]{#2}

\end{document}